\DeclareMathOperator{\lm}{lm}
\DeclareMathOperator{\lc}{lc}
\DeclareMathOperator{\NF}{NF}
\DeclareMathOperator{\spoly}{spoly}
\DeclareMathOperator{\Syz}{Syz}
\DeclareMathOperator{\LeftSyz}{LeftSyz}
\DeclareMathOperator{\Ann}{Ann}
\DeclareMathOperator{\ann}{Ann}
\DeclareMathOperator{\GKdim}{GK.dim}
\DeclareMathOperator{\gkdim}{GK.dim}
\DeclareMathOperator{\ini}{in}
\DeclareMathOperator{\codim}{codim}
\newcommand{\ra}{\rightarrow}
\newcommand{\N}{{\mathbb N}}
\newcommand{\R}{{\mathbb R}}
\newcommand{\K}{{\mathbb K}}
\newcommand{\Z}{{\mathbb Z}}
\newcommand{\C}{{\mathbb C}}
\renewcommand{\d}{\partial }
\newcommand{\BS}{Bernstein-Sato }
\newcommand{\BM}{Brian\c{c}on-Maisonobe }
\newcommand{\comment}[1]{}
\newcommand{\inw}{\ini_{(-w,w)}}
\newcommand{\inwh}{\ini_{(-w,w,0)}}
\theoremstyle{definition}
\newtheorem{lemma}{Lemma}[section]
\newtheorem{proposition}[lemma]{Proposition}
\newtheorem{theorem}[lemma]{Theorem}
\newtheorem{corollary}[lemma]{Corollary}
\newtheorem{definition}[lemma]{Definition}
\newtheorem{example}[lemma]{Example}
\newtheorem{remark}[lemma]{Remark}
\newtheorem{algorithm}[lemma]{Algorithm}
\begin{document}
\setcounter{section}{0}

\begin{frontmatter}

\title{Effective Methods for the Computation of Bernstein-Sato polynomials for Hypersurfaces and Affine Varieties}

\author[Aachen]{Daniel Andres},
\ead{Daniel.Andres@math.rwth-aachen.de}
\author[Aachen]{Viktor Levandovskyy},
\ead{Viktor.Levandovskyy@math.rwth-aachen.de}
\author[Zaragoza]{Jorge Mart\'{i}n-Morales}
\ead{\\jorge@unizar.es}

\address[Aachen]{Lehrstuhl D f\"ur Mathematik, RWTH Aachen, Templergraben 64, 52062 Aachen, Germany}
\address[Zaragoza]{Department of Mathematics-I.U.M.A., University of Zaragoza, C/ Pedro Cerbuna, 12 - 50009, Zaragoza, Spain}

\begin{abstract}
This paper is the widely extended version of the publication, appeared in Proceedings
of ISSAC'2009 conference \citep*{ALM09}. We discuss more details on proofs, 
present new algorithms and examples.
We present a general algorithm for computing an intersection of a left ideal
of an associative algebra over a field with a subalgebra, generated by a single element. 
We show applications of this algorithm in different algebraic situations and describe our
implementation in \textsc{Singular}. Among other, we use
this algorithm in computational $D$-module theory for computing
e.~g. the Bernstein-Sato polynomial of a single polynomial with several approaches.
We also present a new method, having no analogues yet, for the
computation of the Bernstein-Sato polynomial of an affine variety.
Also, we provide a new proof of the algorithm by Brian\c{c}on-Maisonobe for the computation of the $s$-parametric annihilator of a polynomial. Moreover, we present new methods for
the latter computation as well as optimized algorithms for the computation of Bernstein-Sato polynomial in various settings.
\end{abstract}


\end{frontmatter}

\section{Introduction}

This paper extends \cite*{ALM09} by many details, proofs, algorithms and examples.
In this paper we continue reporting \citep{LVint, LM08, ALM09} on our advances in constructive $D$-module theory both in theoretical direction and also in the implementation, which we create in \textsc{Singular}. 

Our work on the implementation of procedures for $D$-modules started in 2003, motivated among other factors by challenging elimination problems in non-commutative algebras,
which appear e.~g. in algorithms for computation of Bernstein-Sato polynomials. We reported on solving several challenges in \cite{LM08}.
A non-commutative subsystem \textsc{Singular:Plural} \citep{Plural} of the
computer algebra system \textsc{Singular} provides a user with possibilities
to compute numerous Gr\"obner bases-based procedures in a wide class 
of non-commutative $G$-algebras \citep{LS03}. It is natural to use
this functionality in the context of computational $D$-module theory.

As of today, the $D$-module suite in \textsc{Singular} consists of three
libraries: \texttt{dmod.lib}, \texttt{dmodapp.lib} and \texttt{bfun.lib}.
Moreover, \texttt{gmssing.lib} \citep{Gmssinglib} contains some sophisticated (and hence 
fast) and useful procedures, e.~g. \texttt{bernstein} for the computation of
the local \BS polynomial of an isolated singularity at the origin. 
There are many useful and flexible procedures for various aspects of $D$-module theory. These libraries are freely distributed together with \textsc{Singular} \citep{Singular} since the version 3-1-0, which was released in April 2009.
More libraries are currently under development, among them procedures for computing
the restriction, integration and localization of $D$-modules.

There are several implementations of algorithms for $D$-modules, namely the
experimental program \textsc{kan/sm1} by N.~Ta\-ka\-ya\-ma \citep{KAN}, 
the \texttt{bfct} package in \textsc{Risa/Asir} \citep{Asir} by M.~Noro \citep{Noro02}
and the package \texttt{Dmodules.m2} in \textsc{Macaulay2} by A.~Leykin and H.~Tsai \citep{dmodMac2}. To the best of our knowledge, there is ongoing work by the \cite{cocoa} to develop some $D$-module functionality as well.
We aim at creating a $D$-module suite, which will combine flexibility and rich functionality with high performance, being able to treat more complicated examples. 

We continue comparing our implementation (cf. Section \ref{compare}) with the ones in the systems \textsc{Asir} and \textsc{Macaulay2}, see \cite{LM08} for earlier results.

In this paper, we address the following computational problems:


\begin{itemize}
\item $s$-parametric annihilator of $f\in\K[x_1,\ldots,x_n]$
\item Bernstein-Sato ideals for $f = f_1 \cdot \ldots \cdot f_m$
\item $b$-function with respect to weights for an ideal in $D$
\item Global Bernstein-Sato polynomial of $f$ 
\item Bernstein-Sato polynomial for a variety
\end{itemize}


In Section \ref{newBM}, we give a new proof for the algorithm by \BM  for computing $\ann_{D[s]} f^s$, announced in \cite{LM08}. 
Moreover, using the same technique we design a new algorithm for the computation of the \BS polynomial of an affine variety, following the paper \cite{BMS06}, and prove its correctness.

We develop the method of principal intersection \ref{PrincipalIntersect} in the
general context of $\K$-algebras and discuss its improvements. This algorithm
is especially useful for problems of $D$-module theory, since it allows to replace
a generally hard elimination with Gr\"obner bases by the search for a $\K$-linear dependence
of a sequence of normal forms. The algorithm 
is applied
in Section \ref{globalBS} to two main methods for computing \BS polynomials as well as
to solving 0-dimensional systems in commutative rings and to the computation
of central characters in Section \ref{PIapps}. 
Moreover, we describe a folklore method for computing \BS polynomial via annihilator (using, however, principal intersection instead of Gr\"obner-based elimination) and prove in Lemma \ref{compareReducedB}, that is is more efficient than the usual one.

The generalization of principal intersection approach
to the case of more general subalgebras we discuss in Section \ref{Imulti}.


\subsection{Notations}
Throughout the article $\K$ stands for a field of characteristic zero. By 
$R$ we denote the polynomial ring $\K[x_1,\ldots,x_n]$ and by $f\in R$ a
non-constant polynomial.

We consider the $n$-th Weyl algebra as the algebra of linear partial differential
operators with polynomials coefficients. That is $D_n = D_n(R) = \K\langle x_1,\dots,x_n,\partial_1,\dots,\partial_n \mid \{ \partial_i x_i = x_i \partial_i +1, \partial_i x_j = x_j \partial_i, i \not=j \}  \rangle$. 
We denote by $D_n[s] = D_n(R) \otimes_{\K} \K[s_1,\ldots,s_n]$ and drop the index $n$ depending on the context. 

The ring $R$ is a natural $D_n(R)$-module with the action
\[
x_i \bullet f(x) = x_i \cdot f(x), \ 
\d_i \bullet f(x) = \frac{\d f(x)}{\d x_i}. 
\]

Working with monomial orderings in elimination, we use the notation $x \gg y$ for ``$x$ is greater than any power of $y$''.

Given an associative $\K$-algebra $A$ and some monomial well-ordering on $A$,
we denote by $\lm(f)$ (resp. $\lc(f)$) the leading monomial (resp. the leading coefficient)
of $f\in A$. Given a left Gr\"obner basis $G \subset A$ and $f\in A$, 
we denote by $\NF(f,G)$ the normal form of $f$ with respect to the left ideal ${}_{A}\langle G \rangle$. We also use the shorthand notation $h \ra_{H} f$ (and $h \ra f$, if $H$ is clear
from the context) for the reduction of $h\in A$ to $f\in A$ with respect to the set $H$. 
If not specified, under \textit{ideal} we mean \textit{left ideal} in a $\K$-algebra,
and by \textit{Gr\"obner basis} a \textit{left Gr\"obner basis}.
For $a,b$ in some $\K$-algebra $A$, we use the Lie bracket notation $[a,b]:=ab - ba$
as well as skew Lie bracket notation $[a,b]_{k}:=ab - k\cdot ba$ for $k\in\K^{*}$.

We say, that a proper subalgebra $S$ of an associative $\K$-algebra $A$ is a 
\textit{principal subalgebra}, if there exists $g\in A\setminus \K$, such that $S = \K[g]$.

Let $M$ be an $A$-module, then we denote the \textit{Gel'fand-Kirillov dimension}
of $M$ (see \cite{MR} for the details and \cite{BGV} for algorithms)  by $\GKdim (M)$.
Recall, that a module $M$ is called \textit{holonomic}, if
$\GKdim (A/\ann M) = 2 \cdot \GKdim(M)$. We prefer this general definition, since it
concides with the classical way of defining holonomy in Weyl algebras and it
is incomparably more general to the latter. In particular, armed with this general
definition we can speak on holonomic modules over any $G$-algebra.

\section{Preliminaries}

It is convenient to treat the algebras we deal with in the bigger framework of $G$-algebras
of Lie type.

\begin{definition}
\label{GalgLie}
Let 
 $A$ be the quotient of the free associative algebra 
$\K\langle x_1,\ldots,x_n\rangle$ by the two-sided ideal $I$, generated by
the finite set $\{x_jx_i - x_ix_j-d_{ij} \} \; \forall 1\leq i<j \leq n$, where
$d_{ij} \in \K[x_1,\ldots,x_n]$. The algebra $A$ is called a {\em $G$--algebra of Lie type} \citep{LS03}, if
\begin{itemize}
\item $\forall \; 1\leq i < j < k \leq n$ the expression
$d_{ij}x_k - x_k d_{ij} + x_j d_{ik} - d_{ik}x_j + d_{jk}x_i - x_i d_{jk}$
reduces to zero modulo $I$ and
\item there exists a monomial ordering $\prec$ on $\K[ x_1,\ldots,x_n]$,
such that $\lm(d_{ij}) \prec x_i x_j$ for each $i < j$.
\end{itemize}	
\end{definition}

These algebras were also studied in \cite{KW90} and \cite{BGLC98} by the names PBW algebras and algebras of solvable type.

Recall the algorithm for computing the preimage of a left ideal under a homomorphism 
of $G$-algebras from \cite{LVint}. 

\begin{theorem}[Preimage of a Left Ideal, \cite{LVint}] \label{ncPreimage} \hfill\\ 
Let $A,B$ be $G$-algebras of Lie type, generated by $\{x_i \mid 1\leq i \leq n\}$ and 
$\{y_j \mid 1\leq j \leq m\}$ respectively, subject to finite sets of relations $R_A, R_B$ as in Def. \ref{GalgLie}. Let $\phi: A \to B$ be a homomorphism of $\K$-algebras.
 Define $I_\phi$ to be the $(A,A)$-bimodule 
 ${}_{A}\langle \{ x_i - \phi(x_i) \mid 1\leq i \leq n\} \rangle_{A} \subset A \otimes_{\K} B$.
Suppose, that there exists an elimination ordering for $B$ on $A \otimes_{\K} B$, satisfying the following conditions 
\[
 1\leq i \leq n, 1\leq j \leq m, \qquad  \lm(y_j \phi(x_i) - \phi(x_i) y_j)  \prec x_i y_j.
\] 

Then there are the following statements.\\
\textbf{1)} Define $A \otimes^{\phi}_{\K} B$ to be the $\K$-algebra generated by $\{ x_1,\ldots, x_n, y_1, \ldots, y_m \}$ subject to the finite set of relations composed of $R_A, R_B$ and 
$\{y_j x_i - x_i y_j -  y_j \phi(x_i) + \phi(x_i) y_j \}$. Then $A \otimes^{\phi}_{\K} B$ is a $G$-algebra of Lie type.\\
\textbf{2)} Let $J \subset B$ be a left ideal, then
\[
\phi^{-1}(J)=(I_\phi+J) \cap A \ \subset \ A \otimes^{\phi}_{\K} B \cap A.
\]
Moreover, this computation can be done by means of elimination.
\end{theorem}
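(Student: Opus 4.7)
The plan is to treat the two parts separately: Part 1 establishes that $A\otimes^\phi_\K B$ is a $G$-algebra of Lie type, so that PBW normal forms and Gr\"obner bases apply, and Part 2 uses a natural surjection $A\otimes^\phi_\K B \to B$ to identify its kernel with $I_\phi$, from which the formula for $\phi^{-1}(J)$ drops out.

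For Part 1, I read off the commutation data $d_{ij}$ of Definition \ref{GalgLie}: for a pair inside $A$, the original $d_{ij}^A$; for a pair inside $B$, the original $d_{ij}^B$; for a mixed pair $(x_i, y_j)$, the element $y_j\phi(x_i) - \phi(x_i)y_j \in B$. The ordering condition reduces in the mixed case literally to the hypothesis $\lm(y_j\phi(x_i) - \phi(x_i)y_j) \prec x_i y_j$ and in the pure cases to the already-assumed structures on $A$ and $B$, since an elimination ordering restricts to suitable monomial orderings on each factor. The nondegeneracy (Jacobi-like) condition is automatic on pure triples. For a mixed triple $(x_i, x_j, y_k)$, unwinding the expression, after repeated use of the identity $[a, y_k] = [\phi(a), y_k]$ for $a \in A$, becomes precisely the Jacobi identity $[[\phi(x_j), \phi(x_i)], y_k] + [\phi(x_j), [y_k, \phi(x_i)]] + [[y_k, \phi(x_j)], \phi(x_i)] = 0$ inside $B$, combined with $\phi([x_j, x_i]) = [\phi(x_j), \phi(x_i)]$ from $\phi$ being a homomorphism. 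The triple $(x_i, y_j, y_k)$ is symmetric.

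Granted Part 1, for Part 2 I define $\Phi: A \otimes^\phi_\K B \to B$ by $\Phi(x_i)=\phi(x_i)$, $\Phi(y_j)=y_j$, and verify it preserves all three families of relations by direct substitution: the $A$-relations become $\phi$ applied to zero; the $B$-relations are tautological; the cross relations cancel. So $\Phi$ is a well-defined surjective $\K$-algebra morphism with $I_\phi\subset\ker\Phi$. The reverse inclusion rests on the key calculation that each $x_i-\phi(x_i)$ commutes with every $y_j$: expanding $y_j(x_i - \phi(x_i)) - (x_i - \phi(x_i))y_j$ and applying the cross relation yields zero. An induction on leading monomials then shows that the left ideal $L$ generated by $\{x_i - \phi(x_i)\}$ also absorbs right multiplication by each $x_k$: the correction term $[x_i - \phi(x_i), x_k]$ simplifies to $p - \phi(p)$ for some $p \in A$ of strictly smaller leading monomial (namely $p = [x_i, x_k]$), and so lies in $L$ by inductive hypothesis. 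Hence $L$ coincides with the bimodule $I_\phi$, and reducing any $f \in A\otimes^\phi_\K B$ modulo $L$ replaces each $x_i$ by $\phi(x_i)$, yielding some $g \in B$ with $\Phi(f) = g$ and $f - g \in I_\phi$. If $f \in \ker\Phi$ then $g = 0$, so $f \in I_\phi$.

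From $\ker\Phi = I_\phi$ together with $\Phi|_B = \mathrm{id}_B$ one obtains $\Phi^{-1}(J) = I_\phi + J$, whence $\phi^{-1}(J) = (I_\phi + J) \cap A$ by restriction to $A$. The final algorithmic claim is then the standard fact that a left Gr\"obner basis of $I_\phi + J$ in $A\otimes^\phi_\K B$ with respect to the given elimination ordering exhibits generators of the intersection with the subalgebra $A$, namely those basis elements whose leading monomials involve only $x$-variables. The hardest parts I expect are the Jacobi verification for the mixed triples in Part 1 and, in Part 2, the verification that the left ideal generated by $\{x_i-\phi(x_i)\}$ already equals the bimodule $I_\phi$, so that a left Gr\"obner basis computation genuinely produces the desired intersection; both reduce to careful but elementary calculations exploiting the homomorphism property of $\phi$ together with the defining cross relation.
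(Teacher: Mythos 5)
The paper does not prove this theorem; it is recalled verbatim from \cite{LVint} and used as a black box, so there is no in-paper proof to compare against. Your reconstruction is correct and follows the standard route: the identification of the mixed commutation data $d_{(x_i)(y_j)}=y_j\phi(x_i)-\phi(x_i)y_j$, the reduction of the non-degeneracy condition on mixed triples to associativity in $B$ plus $\phi([x_i,x_j])=[\phi(x_i),\phi(x_j)]$, and above all the observation that each $x_i-\phi(x_i)$ centralizes $B$ (so the left ideal they generate is already the bimodule $I_\phi$ and equals $\ker\Phi$) are exactly the points on which the original argument turns. The only place to be slightly more careful is the pure-pair ordering conditions in Part 1: the hypothesis only asserts the mixed inequalities, so one must read ``elimination ordering for $B$ on $A\otimes_\K B$'' as already compatible with the $G$-algebra structures of $A$ and $B$ (restriction to each factor need not automatically satisfy $\lm(d_{ij})\prec x_ix_j$ for an arbitrary admissible ordering); this is the reading intended in the statement and in Proposition \ref{preimageCor}.
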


The following proposition is a reformulation
of Theorem \ref{ncPreimage}, adopted to the situation, which is often encountered in context
of $D$-modules.

\begin{proposition}
\label{preimageCor}
Let $A_1, B_1, C$ be $G$-algebras of Lie type 
and $\varphi: A_1 \ra B_1$ be a homomorphism of $\K$-algebras. 
Consider the following data:
$$
\begin{array}{c}
A = C\otimes_\K A_1,\quad B = C\otimes_\K B_1,\quad \phi = 1_{C} \otimes\varphi: A \ra B,\\
E = A\otimes_\K^{\phi} B,\qquad  E' = C\otimes_\K (A_1\otimes_\K^{\varphi} B_1).\\
\end{array}
$$
Then $A \subset E' \subset E$ and for a left ideal $J\subset B$ we have:
\begin{enumerate}
\item $(E I_\varphi + E J) \cap E' = E'I_{\varphi} + E'J$.
\item $\phi^{-1}(J) = (E'I_{\varphi} + E'J)\cap A$.
\end{enumerate}
Moreover, the second intersection can be computed using Gr\"obner bases, provided
there exists an elimination ordering for $B_1$ on $E'$ compatible with
the $G$-algebra structure of $E'$.
\end{proposition}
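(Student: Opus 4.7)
The plan is to exploit the fact that $E = A\otimes^{\phi}_\K B$ contains ``two copies'' of the subalgebra $C$---one arising from the $A$-factor and one from the $B$-factor---while $E'$ contains only a single copy. Since $\phi = 1_C\otimes\varphi$ restricts to the identity on $C$, these two copies become identified modulo the extra generators $c^{(A)} - c^{(B)}$ of $I_\phi$. The heart of the argument is the construction of an algebra retraction $\pi : E\twoheadrightarrow E'$ that collapses the two copies to one, and then pushing the preimage computation of Theorem \ref{ncPreimage} through it.

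For (1), the containment $A\subset E'$ follows by tensoring $A_1\hookrightarrow A_1\otimes^{\varphi}_\K B_1$ on the left with $C$. For $E'\subset E$, I define an injective $\K$-algebra morphism $\iota : E'\to E$ by sending each generator of $C$ (in $E'$) to the corresponding element of the $A$-factor of $E$ and fixing the generators of $A_1$ and $B_1$. The only nontrivial relation to verify is that $C\subset A$ commutes with $B_1\subset B$ inside $E$; this follows from the $\phi$-twist $[b,c]_E = [b,\phi(c)]_E$ together with $\phi(c) = c\in C\subset B$ and the commuting tensor structure $B = C\otimes_\K B_1$. Injectivity is clear since $\iota$ carries a PBW basis of $E'$ onto a subset of a PBW basis of $E$. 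Simultaneously I construct the retraction $\pi : E\to E'$ by sending \emph{both} copies of each generator of $C$ to the single copy in $E'$, and verify relation-by-relation (in particular the four subcases of the $\phi$-twist) that $\pi$ is a well-defined $\K$-algebra homomorphism with $\pi\circ\iota = \mathrm{id}_{E'}$.

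For (2), the inclusion $\supseteq$ is immediate. For $\subseteq$, given $x\in (EI_\varphi + EJ)\cap E'$ the retraction property gives $\pi(x)=x$, while $\pi$ being an algebra homomorphism with $\pi(E)=E'$ and fixing the generators of $I_\varphi$ and (the relevant embedding of) $J$ produces $\pi(EI_\varphi + EJ)\subset E'I_\varphi + E'J$, so $x\in E'I_\varphi + E'J$. Statement (3) then follows by applying Theorem \ref{ncPreimage} to $\phi$: the generators of $I_\phi$ split into those of the form $a - \varphi(a)$ (which generate $I_\varphi$) and those of the form $c^{(A)} - c^{(B)}$ (which lie in $\ker\pi$), and a short bookkeeping with $\pi$ identifies $(I_\phi + J)\cap A$ with $(EI_\varphi + EJ)\cap A$, after which (2) yields the conclusion. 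The computability claim is then a consequence of Theorem \ref{ncPreimage} applied to $\varphi : A_1\to B_1$, which furnishes both the $G$-algebra structure on $A_1\otimes^{\varphi}_\K B_1$ (and hence on $E'$) and the required elimination ordering for $B_1$.

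I expect the main obstacle to be the bookkeeping around how $J\subset B$ embeds into $E$ versus $E'$: the canonical inclusion $B\hookrightarrow E$ uses the $B$-copy of $C$, whereas the composition $B\hookrightarrow E'\hookrightarrow E$ uses the $A$-copy, and the two differ by left multiples of the generators $c^{(A)} - c^{(B)}$ of $I_\phi$, which are \emph{not} in $I_\varphi$. Justifying that this discrepancy is absorbed when one intersects with $A$---so that passing from $I_\phi$ inside $E$ down to $I_\varphi$ inside $E'$ preserves the intersection---is exactly where the retraction $\pi$ and the deliberate identification of the two copies of $C$ do the real work.
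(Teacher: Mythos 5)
The paper does not actually prove Proposition \ref{preimageCor}: it is introduced only as a ``reformulation'' of Theorem \ref{ncPreimage} and then used without further justification, so your write-up supplies a missing argument rather than reproducing one. Your strategy is correct, and it correctly locates the only non-formal content of the proposition: $E$ contains two copies of $C$, the bimodule $I_\phi$ contains the extra generators $c^{(A)}_k-c^{(B)}_k$ that are absent from $I_\varphi$, and the canonical embedding $B\hookrightarrow E$ used in Theorem \ref{ncPreimage} disagrees with the composite $B\hookrightarrow E'\hookrightarrow E$ exactly by these generators; the retraction $\pi$ with $\pi\circ\iota=\mathrm{id}_{E'}$ disposes of all three points at once. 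Two details should be written out for completeness. First, the assertion that the two embeddings of $J$ differ by \emph{left} multiples of the $c^{(A)}_k-c^{(B)}_k$ (so that the discrepancy is absorbed into the left ideal generated by $I_\phi$, and not merely into the two-sided ideal) rests on the fact that each $c^{(A)}_k-c^{(B)}_k$ commutes in $E$ with every generator of $B$: for a generator of $B_1$ this is the same twist computation you use to define $\iota$, and for $c^{(B)}_l$ one finds $[c^{(B)}_l,\,c^{(A)}_k]=[c^{(B)}_l,\,c^{(B)}_k]$, so the bracket with the difference vanishes; an easy induction on degree then places $p(c^{(B)})-p(c^{(A)})$ in the left ideal generated by the differences. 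Second, $I_\phi$ and $I_\varphi$ are bimodules, so ``$\pi$ fixes the generators of $I_\varphi$'' should be sharpened to ``$\pi$ maps $I_\phi$ into $I_\varphi$'', which holds because $\pi$ is the identity on $A$ and annihilates each $c^{(A)}_k-c^{(B)}_k$. With these two points made explicit your argument is complete; the deferred well-definedness checks for $\iota$ and $\pi$ are routine and do go through, and the computability claim is, as you say, just Theorem \ref{ncPreimage} applied to $\varphi$ together with the ordering hypothesis of the proposition.
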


In the proofs we quite often use the following.

\begin{lemma}[Generalized Product Criterion, \cite{LS03}]
\label{prodCrit}
Let $A$ be a $G$-algebra of Lie type and $f,g\in A$. Suppose that
$\lm(f)$ and $\lm(g)$ have no common factors, then $\spoly(f,g)$ reduces to $[f,g]$ with respect to the set $\{ f,g \}$.
\end{lemma}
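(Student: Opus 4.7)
I would mimic the classical commutative product-criterion proof, carefully tracking the non-commutative corrections forced by the $G$-algebra commutation relations. Set $m_f := \lm(f)$, $m_g := \lm(g)$ and write $f = c_f m_f + \tilde f$, $g = c_g m_g + \tilde g$ with $\lm(\tilde f) \prec m_f$, $\lm(\tilde g) \prec m_g$. Coprimality of $m_f, m_g$ gives $\lcm(m_f, m_g) = m_f m_g$ as commutative monomials, and after rescaling to $c_f = c_g = 1$ one has $\spoly(f,g) = m_g\cdot f - m_f\cdot g$.

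\textbf{Key identity.} Expanding both sides,
\[
\spoly(f,g) = (m_g m_f - m_f m_g) + m_g \tilde f - m_f \tilde g,
\]
\[
[f,g] = (m_f m_g - m_g m_f) + (m_f \tilde g - m_g \tilde f) + (\tilde f m_g - \tilde g m_f) + (\tilde f \tilde g - \tilde g \tilde f),
\]
all the ``top-level'' terms cancel pairwise, and a direct computation yields
\[
\spoly(f,g) + [f,g] \;=\; \tilde f \cdot g \;-\; \tilde g \cdot f.
\]
The claim therefore reduces---up to the standard sign convention of $[\cdot,\cdot]$ in the spolynomial formulation---to proving $\tilde f g - \tilde g f \to_{\{f,g\}} 0$. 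The $G$-algebra axiom $\lm(d_{ij}) \prec x_i x_j$ implies, by induction on total degree, that $\lm(u\cdot h) = u \cdot \lm(h)$ for every monomial $u$ and every nonzero $h \in A$, all commutation corrections lying strictly below in the ordering. Hence $\tilde f \cdot g$ has leading monomial $\lm(\tilde f)\cdot m_g$, which is divisible by $m_g$; subtracting $\lc(\tilde f)\lm(\tilde f)\cdot g$ produces $(\tilde f - \lc(\tilde f)\lm(\tilde f))\cdot g$, a product with strictly fewer terms in its left factor. Induction on the number of terms of $\tilde f$ drives this to $0$, and a symmetric argument gives $\tilde g \cdot f \to_{\{f\}} 0$, completing the proof.

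\textbf{Main obstacle.} The only genuinely subtle point is the leading-monomial control in a $G$-algebra: both the pairwise cancellation inside the key identity and the step-by-step reduction of the residual depend on the fact that every commutation correction introduced by the $d_{ij}$ lies strictly below the associated product monomial. This is built into the two axioms of Definition \ref{GalgLie}, but must be propagated via an induction on total degree so that at every intermediate stage of the reduction the leading monomial genuinely drops; once this is in hand, the remainder of the argument is elementary bookkeeping.
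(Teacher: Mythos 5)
The paper itself gives no proof of this lemma --- it is quoted from \cite{LS03} --- so there is nothing internal to compare against; your argument is essentially the standard one from that reference and it is correct. The key identity checks out: after normalizing $\lc(f)=\lc(g)=1$ one has $\spoly(f,g)+[f,g]=\tilde f g-\tilde g f$, and since $A$ is of Lie type every product of monomials is again monic with the expected leading monomial, so $\tilde f\cdot g$ and $\tilde g\cdot f$ reduce to zero term by term against $g$ and $f$ respectively, exactly as you describe. The one point worth making explicit is that your computation lands on $-[f,g]=[g,f]$ rather than $[f,g]$; as you note this is purely a matter of which sign convention one fixes for $\spoly$, and it is immaterial for every use of the lemma in the paper (Lemma \ref{BMgb} and Theorem \ref{SannFsVar}), where one only needs that the bracket in question reduces to zero.
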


\section{$s$-parametric Annihilator of $f$} \label{sAnn}

Recall Malgrange's construction for $f = f_1 \cdot \ldots \cdot f_p\in R=\K[x_1,\ldots,x_n]$.
Let $D_n = D_n(R)$, $T=\K[t_1,\ldots,t_p]$ and $D'_p:= D_p(T)$.
Consider the $(p+n)$-th Weyl algebra $D_{p+n} = D_n \otimes_{\K} D'_p$.
Moreover, consider
the following left ideal in $D_{p+n}$, called the {\em Malgrange ideal}
\[
I_f  := \langle \; \{ \; t_j - f_j, \partial_i + \sum^p_{j=1} \frac{\partial f_j}{\partial x_i} {\partial t}_j, \; 1\leq j\leq p, 1\leq i\leq n \;\}\; \rangle.
\]
Then for $s=(s_1,\ldots,s_p)$ we denote $f^s := f_1^{s_1} \dots f_p^{s_p}$. Let us compute
\[
 I_f \cap \K[\{t_j \d t_j\}]\langle x_i, \d x_i \mid [\d x_i, x_i]=1 \rangle \subset D_n[\{t_j \d t_j \}] \subset D_{p+n}
\]
and furthermore, replace $t_j \d t_j$ with $-s_j-1$. The result is known (e.~g. \cite{SST00}) to be exactly $\Ann_{D[s]} f^s \subset D[s]$.

There exist several methods for the computation of the $s$-parametric annihilator of~$f^s$.

\subsection{Oaku and Takayama}

The algorithm of Oaku and Takayama \citep{Oaku97a,Oaku97b,Oaku97,SST00} was developed in a wider context
and uses homogenization. With notations as above, let
$H := D_n \otimes _\K D'_p \otimes _\K \K[u_1, \ldots, u_p, v_1, \ldots, v_p].$
Moreover, let $I$ below be the $(u,v)$-homogenized Malgrange ideal, that is the left ideal in $H$
\[
I = \left\langle  \{t_j - u_j f_j, \partial_i + \sum^p_{k=1} \frac{\d f_k}{\d x_i} u_k {\d t}_j, u_j v_j -1 \} \right\rangle.
\]
Oaku and Takayama proved, that $\ann_{D_n[s]}(f^s)$ can be obtained in two steps.
At first $\{u_j, v_j\}$ are eliminated from $I$ with the help of Gr\"obner bases, thus
yielding $I' = I \cap (D_n \otimes _\K D'_p)$. Then, one calculates
$I' \cap (D_n \otimes _\K \K[\{-t_j \d t_j -1 \}])$ and
substitutes every appearance of $t_j \d t_j$ by $-s_j-1$ in the latter. 

\subsection{Brian\c{c}on and Maisonobe}
\label{BM}

Consider $S_p = \K\langle \{ \d t_j, s_j \} \mid \d t_j s_k = s_k \d t_j - \delta_{jk} \d t_j  \rangle$ (the $p$-th shift algebra) and $B = D_n \otimes_{\K} S_p$. Moreover, consider the following left ideal in $B$:
\[
I = \left\langle \{s_j +  f_j \d t_j, \partial_i + \sum^p_{k=1} \frac{\partial f_k}{\partial x_i} {\partial t}_k \} \right\rangle.
\]
Brian\c{c}on and Maisonobe proved in \cite{BM02} that $\ann_{D_n[s]}(f^s) = I\cap D_n[s]$ and 
hence the latter can be computed via the left Gr\"obner basis
with respect to an elimination ordering for $\{\d t_j\}$.


\subsection{A new proof for Brian\c{c}on-Maisonobe}
\label{newBM}

By using the Preimage Theorem \ref{ncPreimage} we give a new, completely computer-algebraic 
proof for the method of \BM \ref{BM}.

Let $A := D_n[s] = \K\langle \{ s_j, x_i, \d_i \} \mid \d_i x_i=x_i \d_i+1 \rangle$, and
\[
B := \K\langle \{t_j, \d t_j, x_i, \d_i \}\mid \{\d_i x_i=x_i \d_i+1,  \d t_j t_j=t_j \d t_j+1\} \rangle.
\]

Thus in the notations of Proposition \ref{preimageCor}, 
$C = D_n, A_1 = \K[s], B_1 = \K\langle \{t_j, \d t_j \mid \d t_j t_j=t_j \d t_j+1\} \rangle$
and $A= C \otimes A_1, B= C \otimes B_1$. Consider the algebraic Mellin transform (cf. \cite{SST00}) $\varphi: A_1 \to B_1$, $\ s_j \mapsto -t_j \d t_j-1$.


Hence $I_\varphi = \langle \{t_j \d t_j + s_j + 1\} \rangle \subset A_1\otimes^{\varphi}_{\K} B_1 =: E'$. 
Since $[t_k, s_j] = \delta_{jk} t_j$ and $[\d t_k, s_j] = -\delta_{jk} \d t_j$, the ordering conditions of Theorem \ref{ncPreimage} take the form $t_j \prec s_j t_j, \d t_j \prec s_j \d t_j$, which are satisfied if and only if $1 \leq t_j, \d t_j, s_j$. 

By Proposition \ref{preimageCor}, 
for any $L \subset B$, $\phi^{-1}(L)=(I_\phi+L) \cap A$. 
Hence, 
\begin{align*}
I_\phi+L 
&= \langle \{t_j - f_j, \partial_i + \sum^p_{j=1} \frac{\partial f_j}{\partial x_i} {\partial t}_j,  t_j \partial t_j + s_j + 1 \} \rangle\\
&= \langle \{t_j - f_j, \partial_i + \sum^p_{j=1} \frac{\partial f_j}{\partial x_i} {\partial t}_j,  f_j \partial t_j + s_j \} \rangle
\end{align*}
because $t_j \d t_j + s_j + 1$ reduces to
\[
 t_j \d t_j + s_j + 1 - \d t_j \cdot (t_j - f_j) = f_j \d t_j + s_j 
 \in I_{\phi} + L.
\]


\begin{lemma}
\label{BMgb}
Consider an ordering $\prec_T$, which satisfies the property 
$\{t_j \}  \gg  \{ x_i \} $, $\{\d_i, s_j \}  \gg  \{ x_i, \d t_j \}$. 
Moreover, set
\[
g_i :=\partial_i + \sum^p_{k=1} \frac{\partial f_k}{\partial x_i} {\partial t}_k, \
S_1 := \{ t_j - f_j, g_i \}, \
S_2 := S_1 \cup \{s_j +  f_j \d t_j \} \subset E'.
\]
Then $S_1$ and $S_2$ are left Gr\"obner bases with respect to $\prec_T$.
\end{lemma}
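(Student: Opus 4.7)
The plan is to apply the Generalized Product Criterion (Lemma \ref{prodCrit}) to every pair of elements in $S_1$ (resp. $S_2$). For this I first have to read off the leading monomials under $\prec_T$, then verify pairwise coprimality, and finally check that each Lie bracket $[f,g]$ reduces to zero modulo the given set.

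First, using $\{t_j\} \gg \{x_i\}$ I get $\lm(t_j - f_j) = t_j$. Using $\{\partial_i, s_j\} \gg \{x_i, \partial t_j\}$, together with the fact that $\frac{\partial f_k}{\partial x_i} \in \K[x]$, I obtain $\lm(g_i) = \partial_i$ and $\lm(s_j + f_j \partial t_j) = s_j$. All three families of leading monomials are in pairwise disjoint variables, so for every pair of distinct generators in $S_2$ the hypothesis of Lemma \ref{prodCrit} is fulfilled and $\spoly(\cdot,\cdot)$ reduces to the corresponding commutator.

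Second, I would compute these commutators using the relations in $E'$: the Weyl relations $[\partial_i,x_i]=1$, $[\partial t_j,t_j]=1$, the twisted relations $[t_k,s_j]=\delta_{jk}t_j$ and $[\partial t_k,s_j]=-\delta_{jk}\partial t_j$ induced by $\varphi(s_j)=-t_j\partial t_j-1$, and the fact that $\{x_i,\partial_i\}$ commute with $\{t_j,\partial t_j,s_j\}$. The relevant brackets to treat are:
\begin{itemize}
\item $[t_j-f_j,\,t_k-f_k] = 0$ trivially;
\item $[t_j-f_j,\,g_i]$: here $[t_j,g_i]=\sum_k\frac{\partial f_k}{\partial x_i}[t_j,\partial t_k]=-\frac{\partial f_j}{\partial x_i}$ while $[f_j,g_i]=[f_j,\partial_i]=-\frac{\partial f_j}{\partial x_i}$, so the bracket vanishes;
\item $[g_i,g_l]$: a direct Schwarz-type computation shows the two $\partial t$-linear pieces cancel by symmetry of second partial derivatives;
\item $[s_j+f_j\partial t_j,\,t_k-f_k]$: using $[s_j,t_k]=-\delta_{jk}t_j$ and $[f_j\partial t_j,t_k]=\delta_{jk}f_j$, the bracket equals $-\delta_{jk}(t_j-f_j)$, which reduces to $0$ via the generator $t_j-f_j\in S_1$;
\item $[s_j+f_j\partial t_j,\,g_i]$: the contribution $\frac{\partial f_j}{\partial x_i}\partial t_j$ coming from $[s_j,\sum\frac{\partial f_k}{\partial x_i}\partial t_k]$ cancels against $[f_j,\partial_i]\partial t_j=-\frac{\partial f_j}{\partial x_i}\partial t_j$;
\item $[s_j+f_j\partial t_j,\,s_k+f_k\partial t_k]$ for $j\neq k$: every Kronecker delta appearing makes the contribution vanish.
\end{itemize}

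Thus each $\spoly$ reduces to zero modulo $S_1$ (respectively $S_2$), so both sets are left Gr\"obner bases by the standard Buchberger criterion in the $G$-algebra $E'$ (cf.\ Lemma \ref{prodCrit}). The only non-routine step is the pair $(s_j+f_j\partial t_j,\,t_j-f_j)$, because it is the unique case where the bracket is a nonzero element of $E'$ and one must notice that it lies in $S_1$ and hence reduces to zero in one step; every other bracket vanishes identically once the twisted commutation rules are unpacked.
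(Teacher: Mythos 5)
Your proposal is correct and follows essentially the same route as the paper: identify the leading monomials ($t_j$, $\partial_i$, $s_j$), note they are pairwise coprime so the Generalized Product Criterion applies to every pair, and verify that each Lie bracket either vanishes identically or (for the pair involving $s_j+f_j\partial t_j$ and $t_j-f_j$) equals $\pm\delta_{jk}(t_j-f_j)$ and reduces to zero by a generator of $S_1$. Your explicit justification of the leading monomials from the ordering hypotheses is a detail the paper leaves implicit, but the argument is the same.
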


\begin{proof}
We run Buchberger's algorithm by hand. There are only three kinds of critical pairs we have to consider. Due to the ordering property, for each pair the generalized Product Criterion is applicable. Hence, we need to compute just the Lie brackets of members of pairs.
\begin{enumerate}[label=\arabic{*}.]
\item $[t_i - f_i,t_k - f_k] = 0$.
\item 
For pairs $(g_i,g_k)$ computing $[g_i,g_k]$ yields
\[
\sum_j  {\d t}_j [\d_i, \frac{\d f_j}{\d x_k} ] +  \sum_j {\d t}_j  [\frac{\d f_j}{\d x_i}, \d_k]
= \sum_j {\d t}_j ([\d_i, \frac{\d f_j}{\d x_k} ] - [\d_k, \frac{\d f_j}{\d x_i} ]). 
\]
Since $[\d_i, \frac{\d f_j}{\d x_k}] = \frac{\d^2 f_j}{\d x_i x_k} = [\d_k, \frac{\d f_j}{\d x_i} ]$, $\spoly(g_i,g_k)$ reduces to zero.
\item For mixed pairs $(t_k - f_k, g_i)$ we have
\[
[t_k - f_k,g_i] = \sum_j \frac{\d f_j}{\d x_i} [t_k,\d t_j] - [f_k, \d_i]=0.
\]
 Hence, $S_1$ is a left Gr\"obner basis. Now, 
in $S_2$ there are three new kinds of critical pairs to consider and 
for all of them we can apply the generalized Product Criterion.
\item $[t_k - f_k, s_j +  f_j \d t_j]
  = [t_k,s_j] + f_j [t_k, \d t_j] - [f_k,s_j]- [f_k, f_j \d t_j] 
  = \delta_{jk} (t_k -f_j) \to 0.$
\item $[s_i +  f_i \d t_i , s_j +  f_j \d t_j] = f_j [s_i, \d t_j] - f_i [s_j, \d t_i] = 0$.
\item Finally,
\begin{align*}
&\quad[s_j +  f_j \d t_j,  \d_i + \sum\limits^p_{k=1} \frac{\d f_k}{\d x_i} {\d t}_k]\\
&= [s_j, \d_i]  + \d t_j [f_j, \d_i]
 + \sum\limits^p_{k=1} \frac{\d f_k}{\d x_i} [s_j, {\d t}_k]
 + [f_j \d t_j, \sum\limits^p_{k=1} \frac{\d f_k}{\d x_i} {\d t}_k]\\
 &= \frac{\d f_j}{\d x_i} {\d t}_j - [\d_i, f_j] \d t_j  = 0.
\end{align*}
\end{enumerate}
So, $S_2$ is a left Gr\"obner basis.
\end{proof}
We want to eliminate both $\{t_j\}$ and $\{\d t_j\}$ from $I_{\phi} + L$. As we see above, by using an
elimination ordering for $\{t_j\}$ we proved above that $S_2$ is a Gr\"obner basis.
So, the elimination ideal is generated by $S_3 := S_2 \setminus \{t_i - f_i\}$. Hence we can proceed with eliminating $\{\d t_j\}$ from $S_3$, which is exactly the statement of the \BM algorithm in Section \ref{BM}.

\subsection{Bernstein-Sato ideals for $f = f_1 \cdot \ldots \cdot f_m$}
\label{BSideal}

Comparing the effectiveness of the algorithms, \cite{GHU05} concluded that the 
method of \BM is the best for the computation of $s$-parametric annihilators.
In \cite{LM08} we gave experimental results for the case $f = f_1$ and showed,
that the algorithm of \BM is faster than the LOT method, which in turn is
faster than the algorithm of Oaku and Takayama.

Because of the structure of annihilators in the situation $f = f_1 \cdot \ldots \cdot f_p$, $p>1$,
basically the same principles stand behind the corresponding algorithms. 

Let $s = (s_1,\ldots,s_p)$, then a \textit{\BS ideal} in $\K[s]$, which is defined
as 
\[
\mathcal{B}(f) = 
(\Ann_{D[s_1,\ldots,s_p]} f_1^{s_1} \cdot \ldots \cdot f_p^{s_p} + \langle f_1 \cdot \ldots \cdot f_p \rangle ) \cap \K[s_1,\ldots,s_p],
\]
can be computed with the help of $\Ann_{D[s]} f^s \subset D[s]$.
See \cite{Bahloul01} for algorithms.
In contrary to the case $f=f_1$, the ideal $\mathcal{B}(f)$ need not be principal in general. However, it is an open question to give a criterion for the principality of $\mathcal{B}(f)$. 
Armed with such a criterion, one can apply the method of Principal Intersection \ref{PrincipalIntersect} and thus replace expensive elimination above by the computation of a minimal polynomial.
As in the case $f=f_1$ it is an open question, which strategy and which orderings should one
use in the computation of the annihilator in order to achieve better performance.

\subsection{Implementation}

Due to the comparison above, we decided to implement only \BM \ method for the $(s_1,\ldots,s_p)$-para\-met\-ric annihilator $\Ann_{D[s]} f^s \subset D[s]$ in the case of $p>1$.

The corresponding procedure of \texttt{dmod.lib} is called \texttt{annfsBMI}.
It computes both annihilator and the \BS ideal.

We reported in \cite{LM08} on several computational challenges, which have been solved with
the help of our implementation.


We use the following acronyms in adressing functions in the implementation:
\textit{OT} for Oaku and Takayama, \textit{LOT} for Levandovskyy's
modification of Oaku and Takayama \citep{LM08} and \textit{BM} for Brian\c{c}on-Maisonobe. Moreover, it is possible to specify the desired Gr\"obner basis engine (\texttt{std} or \texttt{slimgb}) via an optional argument. 

For the classical situation where $f = f_1$, there are \texttt{SannfsOT, SannfsLOT, SannfsBM} procedures implemented, each along the lines of the corresponding algorithm. Moreover, there is
a procedure \texttt{Sannfs}$(f)$, computing $\Ann_{D[s]} f^s \subset D[s]$ using a ``minimal user knowledge'' principle. 

\begin{example}
\label{exSannfs}
We demonstrate, how to compute the $s$-para\-met\-ric annihilator with \texttt{Sannfs}. This procedure takes a polynomial in a commutative ring as its argument and returns back a Weyl algebra of the type \texttt{ring} together with an object of the type \texttt{ideal} called \texttt{LD}. 

\begin{verbatim}
LIB "dmod.lib";
ring r = 0,(x,y),dp;        // set up commutative ring
poly f = x^3 + y^2 + x*y^2; // define polynomial 
def D = Sannfs(f);          // call Sannfs
setring D; LD;              // activate ring D, print Ann(f^s)
==> LD[1]=2*x*y*Dx-3*x^2*Dy-y^2*Dy+2*y*Dx
==> LD[2]=2*x^2*Dx+2*x*y*Dy+2*x*Dx+3*y*Dy-6*x*s-6*s
==> LD[3]=x^2*y*Dy+y^3*Dy-2*x^2*Dx-3*x*y*Dy-2*y^2*s+6*x*s
\end{verbatim}
Note, that \texttt{LD} is not a Gr\"obner basis but a set of generators. Computing 
a Gr\"obner basis is done by
\verb?groebner(LD);?. In this case \texttt{groebner} returns the generators above and 2 new ones.
\end{example}


\section{$b$-functions with respect to weights for an ideal} \label{bfctIdeal}

Let $0 \neq w \in \R^n_{\geq 0}$ and consider the $V$-filtration
$V = \left\{ V_m \mid m \in \Z \right\}$
on $D$ with respect to $w$,  where 
$V_m$ is spanned by $\left\{ x^{\alpha} \d^{\beta} \mid -w \alpha + w \beta \leq m \right\}$ over $\K$.
That is, $x_i$ and $\d_i$ get weights $-w_i$ and $w_i$ respectively. Note, that with respect to such weights the relation $\d_i x_i = x_i \d_i + 1$ is homogeneous of degree $0$.
It is known that the associated graded ring $\bigoplus_{m \in \Z} V_m / V_{m-1}$ is isomorphic to $D$, which allows us to identify them.

From now on we assume, that $I$ is an ideal such that $D/I$ is a holonomic module. Since holonomic $D$-modules are cyclic (e.~g. \cite{Cou95}), for each holonomic $D$-module $M$ there exists an ideal $I_M$ such that $M \cong D/I_M$ as $D$-modules.

\begin{definition}
Let $0 \neq w \in \R^n_{\geq 0}$. For non-zero
\[
	p = \sum_{\alpha, \beta \in \N_0^n} c_{\alpha \beta} x^{\alpha} \d^{\beta} \in D
\]
we put $m = \max_{\alpha, \beta} \{ -w \alpha + w \beta \mid c_{\alpha \beta} \neq 0\} \in \R$
and define the \emph{initial form of $p$ with respect to the weight $w$} as follows:
\[
	\inw(p) := \sum_{\alpha,\beta \in \N_0^n:~ -w \alpha + w \beta = m} c_{\alpha \beta} x^{\alpha} \d^{\beta}.
\]
For the zero polynomial, we set $\inw(0) := 0$.
Additionally, we call the graded ideal
$\inw(I) := \K \cdot \{ \inw(p) \mid p \in I \}$
the \emph{initial ideal of $I$ with respect to the weight $w$}.
\end{definition}

\begin{definition}
Let $0 \neq w \in \R^n_{\geq 0}$ and $s := \sum_{i=1}^n w_i x_i \d_i$. 
Then $\inw(I) \cap \K[s]$ is a principal ideal in $\K[s]$.
Its monic generator $b(s)$ is called the \emph{global $b$-function of $I$ with respect to the weight $w$}.
\end{definition}

\begin{theorem} \label{b(s) is not zero}
The global $b$-function of $I$ is nonzero.
\end{theorem}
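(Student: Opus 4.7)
My approach is to show that the initial module $N := D/\inw(I)$ is itself holonomic, and then extract a polynomial relation on $s$ by a Noetherian/finite-length argument.

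\textbf{Step 1 (Holonomicity of $N$).} The $V$-filtration $V_\bullet$ is a good, exhaustive increasing filtration of $D$ with $V_m V_n \subseteq V_{m+n}$. As the paper already notes, the homogeneity of $\partial_i x_i - x_i \partial_i - 1$ under the $(-w,w)$-weighting yields $\gr_V D \cong D$. For the ideal $I$, the induced $V$-filtration on $D/I$ is good and its associated graded module is $D/\inw(I)$. Since passage to the associated graded of a good filtration preserves the Gel'fand--Kirillov dimension, we get
\[
\gkdim(D/\inw(I)) \;=\; \gkdim(D/I) \;=\; n,
\]
so $N$ is holonomic.

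\textbf{Step 2 (Reformulation).} Writing $[p]$ for the class of $p \in D$ in $N$, the intersection $\inw(I) \cap \K[s]$ is precisely the kernel of the $\K$-algebra map $\K[s] \to N$, $p(s) \mapsto p(s)\cdot[1]$. Nonvanishing of the $b$-function is thus equivalent to $s$ acting on the cyclic generator $[1]$ as an algebraic element of $\mathrm{End}_\K(N)$. Moreover, since $\inw(I)$ is $\Z$-graded for the $(-w,w)$-grading and $s$ is homogeneous of degree $0$, the whole question takes place in the degree-zero piece $N_0 = D_0/(D_0 \cap \inw(I))$, where $D_0$ denotes the degree-zero subalgebra of $D$.

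\textbf{Step 3 (Noetherian descent).} Holonomic $D$-modules have finite length, so the descending chain of left $D$-submodules
\[
D\cdot[1] \;\supseteq\; D\cdot[s] \;\supseteq\; D\cdot[s^2] \;\supseteq\; \cdots
\]
of $N$ stabilizes: there exist $k \ge 0$ and $p \in D$ with $(1 - ps)\,s^k \in \inw(I)$. Since $\inw(I)$ is graded and both $s^k, s^{k+1}$ sit in degree $0$, only the degree-zero component of $p$ contributes, so we may take $p \in D_0$.

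\textbf{Main obstacle.} The crux of the proof is refining $p \in D_0$ to an actual polynomial in $\K[s]$, since $D_0$ is typically much larger than $\K[s]$. The key structural fact is that $s$ is \emph{central} in $D_0$: using $[x_i \partial_i, x^\alpha \partial^\beta] = (\alpha_i - \beta_i) x^\alpha \partial^\beta$ one computes $[s, x^\alpha \partial^\beta] = (w\alpha - w\beta) x^\alpha \partial^\beta$, which vanishes on $D_0$. Consequently, $\K[s]$ acts on the cyclic $D_0$-module $N_0$ through its centre, and one can iterate the finite-length descent inside $N_0$ to force $[1]$ to be $\K[s]$-torsion. The cleanest way to close this last step is to invoke Kashiwara's classical theorem on the existence of a Bernstein polynomial associated to any good $V$-filtration on a holonomic $D$-module, applied to the induced $V$-filtration on $D/I$; this directly produces a nonzero $b(s) \in \inw(I) \cap \K[s]$.
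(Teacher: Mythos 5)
Your Steps 1--3 are fine as far as they go (the chain $D[1]\supseteq D[s]\supseteq\cdots$ does stabilize, giving $(1-ps)s^k\in\inw(I)$ with $p\in D_0$), but the proof does not close. The ``main obstacle'' you name is the entire content of the theorem, and neither of your two proposed resolutions resolves it. First, ``iterate the finite-length descent inside $N_0$'' is unsubstantiated: finite length of $N$ over $D$ does not give finite length of $N_0$ over $D_0$, and cyclicity of $N_0$ over $D_0$ with $s$ central does not force $[1]$ to be $\K[s]$-torsion (compare $\K[x,y]$ as a cyclic module over itself with $x$ central: $1$ is not $\K[x]$-torsion). Second, invoking ``Kashiwara's classical theorem on the existence of a Bernstein polynomial associated to any good $V$-filtration on a holonomic module'' is circular --- that statement \emph{is} Theorem \ref{b(s) is not zero}, specialized to $D/I$ with the $V$-filtration determined by $w$.

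The missing idea is to upgrade your observation that $[s,p]=(w\alpha-w\beta)p$ on monomials into the statement $\inw(I)\cdot s\subseteq\inw(I)$ (for $(-w,w)$-homogeneous $p$ of degree $m$ one has $p\cdot s=(s+m)\cdot p$), so that \emph{right} multiplication by $s$ is a well-defined left $D$-module endomorphism of $N=D/\inw(I)$. The finiteness input you then need is not finite length of $N$ but finite $\K$-dimension of $\mathrm{End}_D(N)$, which holds because $N$ is holonomic (Remark \ref{End(A/J)}). Linear algebra in this finite-dimensional algebra gives a nonzero minimal polynomial $\mu$ of the endomorphism, and $\mu(s)\in\inw(I)\cap\K[s]$ (Lemma \ref{lemma minimal polynomial}). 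This is exactly the route the paper takes; your descending-chain argument in the module itself cannot substitute for it.
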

We will give yet another proof of this theorem \citep{SST00} in Section \ref{intersection}.

Note, that by setting the weight vector in an appropriate way, one can compute
$b$-functions of holonomic $D$-modules $D/I$, which are usually referred as
$b$-function for restriction, integration, localization etc. These special
$b$-functions play an important role in the computation of the corresponding
restriction, integration, localization modules, see \cite{Oaku97, SST00}.

Following its definition, the computation of the global $b$-function of $I$ with respect to $w$ can be done in two steps:\\
1. Compute the initial ideal $I'$ of $I$ with respect to $w$.\\
2. Compute the intersection of $I'$ with the subalgebra $\K[s]$.

We will discuss both steps separately, starting with the initial ideal.

\subsection{Computing the initial ideal} \label{initial ideal}

In order to compute the initial ideal, the method of weight\-ed homogenization has been proposed in \cite{Noro02}. A more general approach on homogenization of differential operators can be found in \cite{CN97}.

Let $u,v \in \R^n_{>0}$. The associative $\K$-algebra
$D_{(u,v)}^{(h)} 
$ is a $G$-algebra in the variables $x_1,\ldots,x_n,\d_1,\ldots,\d_n,h$ which commute pairwise except for
$\d_j x_i = x_i \d_j + \delta_{ij} h^{u_i + v_j}$.  
$D_{(u,v)}^{(h)}$ is called the \emph{$n$-th weighted homogenized Weyl algebra} with weights $u,v$,
i.e. $x_i$ and $\d_i$ get weights $u_i$ and $v_i$ respectively.

For $p = \sum_{\alpha, \beta} c_{\alpha \beta} x^{\alpha} \partial^{\beta} \in D$
one defines the \emph{weighted homogenization} of $p$ as follows:
\[
	H_{(u,v)}(p) = \sum_{\alpha, \beta} c_{\alpha \beta} h^{\deg_{(u,v)}(p) - (u \alpha + v \beta)} x^{\alpha} \d^{\beta}.
\]
This definition naturally extends to a set of polynomials.
Here, $\deg_{(u,v)}(p)$ denotes the weighted total degree of $p$ with respect to weights $u,v$ for $x,\d$ and weight $1$ for $h$.

For a monomial ordering $\prec$ in $D$, which is not necessarily a well-ordering, we define an associated homogenized global ordering $\prec^{(h)}$ in $D_{(u,v)}^{(h)}$ by setting $h \prec^{(h)} x_i, h \prec^{(h)} \d_i$ for all $i$ and,
\begin{alignat*}{2}
	p \prec^{(h)} q
		&\quad\text{if}\quad &\deg_{(u,v)}(p) &< \deg_{(u,v)}(q)\\
		&\quad\text{or}\quad &\deg_{(u,v)}(p) &= \deg_{(u,v)}(q) 
			 \quad\text{and}\quad
			p_{\mid_{h=1}} \prec q_{\mid_{h=1}}.
\end{alignat*}

Note that for $u = v = (1,\ldots,1)$ this is exactly the standard homogenization as in \cite{SST00}.
Analogue statements of the following two theorems can be found in \cite{SST00} and \cite{Noro02} respectively.
Due to our different conception of Gr\"obner bases (we require well-orderings), we give new proofs for them.

\begin{theorem} \label{Theorem: dehomogenization}
Let $F$ be a finite subset of $D$ and $\prec$ a global ordering.
If $G^{(h)}$ is a Gr\"obner basis of $\langle H_{(u,v)}(F) \rangle$ with respect to $\prec^{(h)}$,
then ${G^{(h)}}_{\mid_{h=1}}$ is a Gr\"obner basis of $\langle F \rangle$ with respect to $\prec$.
\end{theorem}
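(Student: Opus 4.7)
The plan is to imitate the standard homogenization/dehomogenization argument from commutative Gr\"obner basis theory, exploiting the centrality of $h$ in $D_{(u,v)}^{(h)}$ to sidestep non-commutative complications. Since $H_{(u,v)}(F)$ consists of weighted-homogeneous elements, the ideal $\langle H_{(u,v)}(F)\rangle$ is graded with respect to the weighted degree; because $\prec^{(h)}$ refines this grading (it compares weighted degree first), a Buchberger-type completion starting from a homogeneous input produces a homogeneous output, and one may therefore assume without loss of generality that $G^{(h)}$ consists of weighted-homogeneous elements.

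Two ingredients drive the proof. First, for every nonzero weighted-homogeneous $g \in D_{(u,v)}^{(h)}$ one has $g_{\mid_{h=1}} \neq 0$ and $\lm_\prec(g_{\mid_{h=1}}) = \lm_{\prec^{(h)}}(g)_{\mid_{h=1}}$: all terms of $g$ share the same weighted degree, so the tie-breaking rule of $\prec^{(h)}$ reduces to $\prec$ applied to the dehomogenized $(x,\partial)$-monomials, and the map $h\mapsto 1$ from the weighted-degree-$d$ part of $D_{(u,v)}^{(h)}$ to $D$ is an injective $\K$-linear map. Second, for any $a,b \in D$ the identity
$$H_{(u,v)}(a)\cdot H_{(u,v)}(b) \;=\; h^{\wdeg(a)+\wdeg(b)-\wdeg(ab)}\cdot H_{(u,v)}(ab)$$
holds in $D_{(u,v)}^{(h)}$, since both sides are weighted-homogeneous of the same degree $\wdeg(a)+\wdeg(b)$ and coincide at $h=1$. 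From this one deduces: for any $p = \sum_i a_i f_i \in \langle F\rangle$ with $a_i\in D$, $f_i \in F$, setting $M := \max_i(\wdeg(a_i)+\wdeg(f_i))$ yields
$$h^{M-\wdeg(p)}\,H_{(u,v)}(p) \;=\; \sum_i h^{M-\wdeg(a_i)-\wdeg(f_i)}\,H_{(u,v)}(a_i)\,H_{(u,v)}(f_i) \;\in\; \langle H_{(u,v)}(F)\rangle,$$
again by comparing both sides as homogeneous elements of degree $M$ that agree under $h\mapsto 1$.

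The remainder of the argument is routine assembly. That $\langle G^{(h)}_{\mid_{h=1}}\rangle = \langle F\rangle$ is immediate by dehomogenizing the expressions of generators of each set in terms of the other. Fix now $0 \neq p \in \langle F\rangle$. By the second ingredient there exists $N \geq 0$ with $h^N H_{(u,v)}(p) \in \langle H_{(u,v)}(F)\rangle$, and the Gr\"obner basis property produces some $g \in G^{(h)}$ such that $\lm_{\prec^{(h)}}(g)$ divides $\lm_{\prec^{(h)}}(h^N H_{(u,v)}(p)) = h^N \cdot \lm_{\prec^{(h)}}(H_{(u,v)}(p))$. Cancelling the $h^N$ factor, permissible because $h$ is central and divisibility of PBW monomials in the $G$-algebra $D_{(u,v)}^{(h)}$ amounts to componentwise divisibility of exponents, and then dehomogenizing at $h=1$ gives, via the first ingredient, that $\lm_\prec(g_{\mid_{h=1}})$ divides $\lm_\prec(p)$.

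The main obstacle is setting up the non-commutative multiplication formula correctly: one must verify that $\wdeg(ab) \le \wdeg(a)+\wdeg(b)$ in $D$, which rests on the fact that the commutator $[\partial_j, x_i] = 1$ has weighted degree $0 \le u_i+v_j$, and one must keep track of the distinction between monomials in $D_{(u,v)}^{(h)}$ and their dehomogenizations. The centrality of $h$ in $D_{(u,v)}^{(h)}$ ensures that the remaining steps parallel the commutative case with no genuine new difficulty.
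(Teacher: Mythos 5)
Your proof is correct and follows the same basic strategy as the paper's two-line argument (use the Gr\"obner basis property upstairs and transport the leading-monomial divisibility down to $\prec$ via $h\mapsto 1$), but you supply the details the paper leaves implicit: notably that for $p\in\langle F\rangle$ only $h^N H_{(u,v)}(p)$, and not $H_{(u,v)}(p)$ itself, is guaranteed to lie in $\langle H_{(u,v)}(F)\rangle$, and the reduction to a weighted-homogeneous $G^{(h)}$ so that taking leading monomials commutes with dehomogenization. Both points are real gaps in the paper's terse proof that your write-up closes, so nothing further is needed.
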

\begin{proof}
For any $f \in \langle F \rangle$ with $\lm(H_{(u,v)}(f)) = h^{\lambda} x^{\alpha} \d^{\beta}$,
there exists $g^{(h)} \in G^{(h)}$ with $\lm(g^{(h)}) = h^{\kappa} x^{\gamma} \d^{\delta}$
satisfying $\lm(g^{(h)}) \mid \lm(f)$.
Then $\lm(g^{(h)})_{\mid_{h=1}} = x^{\gamma} \d^{\delta} \mid x^{\alpha} \d^{\beta} = \lm(f)_{\mid_{h=1}}$,
which proves the claim.
\end{proof}

\begin{theorem} \label{Theorem: initial ideal}
Let $\prec$ be a global monomial ordering on $D$ and $\prec_{(-w,w)}$ the non-global ordering defined by
\begin{alignat*}{2}
		x^{\alpha} \d^{\beta} \prec_{(-w,w)} x^{\gamma} \d^{\delta}
		& \quad\text{if}\quad &-w \alpha + w \beta &< -w \gamma + w \delta\\
		& \quad\text{or}\quad &-w \alpha + w \beta &= -w \gamma + w \delta
			\quad\text{and}\quad x^{\alpha} \d^{\beta} \prec x^{\gamma} \d^{\delta}.
\end{alignat*}
If $G^{(h)}$ is a Gr\"obner basis of $H_{(u,v)}(I)$ with respect to $\prec^{(h)}_{(-w,w)}$,
then $\{ \inwh(g) \mid g \in G^{(h)}\}$ is a Gr\"obner basis of $\inwh(H_{(u,v)}(I))$ with respect to $\prec^{(h)}$.
\end{theorem}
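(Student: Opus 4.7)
The plan is to reduce the statement to two observations: (i) on $D^{(h)}_{(u,v)}$, the ordering $\prec^{(h)}_{(-w,w)}$ is designed precisely so that the leading monomial of any nonzero $p$ coincides with the $\prec^{(h)}$-leading monomial of $\inwh(p)$; and (ii) every $(-w,w,0)$-homogeneous element of $\inwh(H_{(u,v)}(I))$ is itself the initial form of some element of $H_{(u,v)}(I)$. Once these are available, the Gr\"obner basis property of $G^{(h)}$ with respect to $\prec^{(h)}_{(-w,w)}$ transports divisibility of leading monomials directly to $\{\inwh(g)\mid g\in G^{(h)}\}$ with respect to $\prec^{(h)}$.

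For (i), I would verify, directly from the definition of $\prec^{(h)}_{(-w,w)}$, the identity
\[
\lm_{\prec^{(h)}}\bigl(\inwh(p)\bigr)\;=\;\lm_{\prec^{(h)}_{(-w,w)}}(p)\qquad\text{for every }p\neq 0.
\]
This is immediate from the construction: $\prec^{(h)}_{(-w,w)}$ compares first by $(-w,w,0)$-weight and then by $\prec^{(h)}$, so its leading term is the $\prec^{(h)}$-largest term among those of maximal $(-w,w,0)$-weight, which is exactly the support of $\inwh(p)$.

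For (ii), one first notes that the commutation relations $\d_i x_j-x_j\d_i=\delta_{ij}h^{u_i+v_i}$ of $D^{(h)}_{(u,v)}$ are $(-w,w,0)$-homogeneous (both sides have weight $-w_i+w_j$ or $0$, using $h$-weight $0$), so $\inwh(H_{(u,v)}(I))$ is a $(-w,w,0)$-graded ideal. Given a nonzero homogeneous $f\in\inwh(H_{(u,v)}(I))$ of degree $m$, write $f=\sum_i c_i\inwh(p_i)$ with $c_i\in\K$ and $p_i\in H_{(u,v)}(I)$, discard those indices whose top $(-w,w,0)$-weight differs from $m$, and set $q:=\sum_i c_ip_i\in H_{(u,v)}(I)$. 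Because $f\neq 0$, the top graded part of $q$ equals $f$, so $\inwh(q)=f$.

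To conclude, take any nonzero $f\in\inwh(H_{(u,v)}(I))$, decompose it into $(-w,w,0)$-homogeneous components $f=\sum_m f_m$, and pick $m_0$ with $\lm_{\prec^{(h)}}(f)=\lm_{\prec^{(h)}}(f_{m_0})$. By (ii) we have $f_{m_0}=\inwh(q)$ for some $q\in H_{(u,v)}(I)$; the Gr\"obner basis property then supplies $g\in G^{(h)}$ with $\lm_{\prec^{(h)}_{(-w,w)}}(g)\mid\lm_{\prec^{(h)}_{(-w,w)}}(q)$, and applying (i) to both sides yields $\lm_{\prec^{(h)}}(\inwh(g))\mid\lm_{\prec^{(h)}}(f)$. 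The main obstacle is the lifting step (ii): additivity of $\inwh$ fails in general, so one must truncate the chosen representatives to the top graded part and invoke the $(-w,w,0)$-homogeneity of the commutation relations in the weighted homogenized Weyl algebra to ensure the resulting $q$ still lies in $H_{(u,v)}(I)$.
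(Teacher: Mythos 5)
Your overall strategy is exactly the paper's: the two pillars are the identity $\lm_{\prec^{(h)}_{(-w,w)}}(p)=\lm_{\prec^{(h)}}(\inwh(p))$ and the lifting of a homogeneous element of $\inwh(H_{(u,v)}(I))$ to an element of $H_{(u,v)}(I)$; your step (ii) in fact supplies details (truncating to the top graded part) that the paper merely asserts, and your reduction of a general $f$ to its homogeneous components is a welcome extra.

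There is, however, one misstep in (i). You claim the identity for \emph{every} nonzero $p$ and justify it by saying that $\prec^{(h)}_{(-w,w)}$ ``compares first by $(-w,w,0)$-weight and then by $\prec^{(h)}$''. That is not how the ordering is built: by the definition of the homogenized ordering, $\prec^{(h)}_{(-w,w)}$ compares \emph{first} by total $(u,v,1)$-degree, and only then by $(-w,w)$-weight and finally by $\prec$. For a non-homogeneous element these two descriptions diverge; e.g.\ for $p=x^3+\d$ with $u=v=w=(1)$ one has $\lm_{\prec^{(h)}_{(-w,w)}}(p)=x^3$ while $\inwh(p)=\d$, so the identity fails. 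The identity is true precisely for $(u,v,1)$-homogeneous $p$, because then the primary degree comparison is vacuous and the ordering does reduce to ``weight first, then $\prec$''. This is why the paper inserts the phrase ``since $f,g$ are $(u,v)$-homogeneous''. Your proof is repaired by restricting (i) to homogeneous elements and observing that everything you apply it to is homogeneous: $H_{(u,v)}(I)$ is generated by $(u,v,1)$-homogeneous elements in an algebra with homogeneous relations, so it is graded, its elements' top graded parts lie in it (which is what your lift $q$ uses), and $G^{(h)}$ may be taken to consist of homogeneous elements. With that amendment the argument is complete and coincides with the paper's.
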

\begin{proof}
Let $f' \in \inwh(H_{(u,v)}(I))$ be $(-w,w,0)$-ho\-mo\-ge\-neous. 
There exist elements $f \in H_{(u,v)}(I), g \in G^{(h)}$ such that 
$f' = \inwh(f)$ and $\lm_{\prec^{(h)}_{(-w,w)}}(g) \mid \lm_{\prec^{(h)}_{(-w,w)}}(f)$.
Since $f,g$ are $(u,v)$-ho\-mo\-ge\-neous, we have
\[
\lm_{\prec^{(h)}_{(-w,w)}}(p) = \lm_{\prec^{(h)}}(\inwh(p)) \text{ for } p \in \{ f,g \},
\]
which finishes
the proof.
\end{proof}

Summarizing the results from this section, we obtain the following algorithm to compute the initial ideal.

\begin{algorithm}[\texttt{InitialIdeal}] \label{InitialIdeal} ~
\begin{algorithmic}
\STATE 
	\REQUIRE $I \subset D$ such that $D/I$ is holonomic,
	 $\prec$ a global ordering on $D$, 
	 $0 \neq w \in \R^n_{\geq 0}$,
	 $u,v \in \R^n_{>0}$
	\ENSURE A Gr\"obner basis $G$ of $\inw(I)$ with respect to $\prec$
	\STATE $\prec^{(h)}_{(-w,w)} :=$ the homogenized ordering as in Theorem \ref{Theorem: initial ideal}
	\STATE $G^{(h)} :=$ a Gr\"obner basis of $H_{(u,v)}(I)$ with respect to $\prec^{(h)}_{(-w,w)}$
	\RETURN $G = \inw({G^{(h)}}_{\mid_{h=1}})$
\end{algorithmic}
\end{algorithm}

\subsection{Intersecting an ideal with a principal subalgebra} \label{intersection}

We will now consider a much more general setting than needed to compute the global $b$-function.
Let $A$ be an associative $\K$-algebra. 
We are interested in computing the intersection of a left ideal $J \subset A$ with the subalgebra $\K[s]$ of $A$ where $s \in A \setminus \K$. 
We would like
to find the monic 
polynomial $b \in A$ such that
\[
	\langle b \rangle = J \cap \K[s].
\]

For this section, we will assume that there is a monomial ordering on $A$ such 
$J$ has a finite left Gr\"obner basis $G$.

Then we can distinguish between the following four situations:
\begin{enumerate}[label=\arabic{*}., ref=\arabic{*}.] 
	\item \label{sit 1} No leading monomials of elements in $G$ divide the leading monomial of any power of $s$.
	\item \label{sit 2} There is an element in $G$ whose leading monomial divides the leading monomial of some power of $s$.
		In this situation, we have the following sub-situations.
		\begin{enumerate}[label=2.\arabic{*}., ref=2.\arabic{*}.]
			\item \label{sit 2.1} $J \cdot s \subset J$ and $\dim_{\K}(\mathrm{End}_{A}(A/J)) < \infty$.
			\item \label{sit 2.2} One of the two conditions in 2.1. does not hold.
			\begin{enumerate}[label=2.2.\arabic{*}., ref=2.2.\arabic{*}.]
				\item \label{sit 2.2.1} The intersection is zero.
				\item \label{sit 2.2.2} The intersection is not zero.
			\end{enumerate}
		\end{enumerate}
\end{enumerate}

We now consider the first case.

\begin{lemma} \label{la:zero intersection}
If there exists no $g \in G$ such that $\lm(g)$ divides $\lm(s^k)$
for some $k \in \N_0$, 
then $J \cap \K[s] = \{ 0 \}$.
\end{lemma}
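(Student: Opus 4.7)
My plan is to argue by contradiction: suppose there exists a nonzero $p \in J \cap \K[s]$, and then exhibit some $g \in G$ whose leading monomial divides $\lm(s^k)$ for a suitable $k$.

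I would first write $p$ uniquely as $p = \sum_{i=0}^{d} c_i s^i$ with $c_i \in \K$ and $c_d \neq 0$, so that $d$ is the degree of $p$ in $s$. The crucial step is to identify $\lm(p)$. Because $A$ admits left Gröbner bases with respect to the chosen monomial ordering, the leading monomial is multiplicative, so $\lm(s^i) = \lm(s)^i$ for every $i \geq 0$. Since $\K[s]$ is a principal subalgebra of $A$, we have $s \notin \K$, so $\lm(s)$ is a non-constant monomial; hence the chain $\lm(s^0) \prec \lm(s^1) \prec \cdots \prec \lm(s^d)$ is strictly ascending, no cancellation of the leading term in $p$ can occur, and $\lm(p) = \lm(s^d)$.

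Then, since $G$ is a left Gröbner basis of $J$ and $0 \neq p \in J$, the defining property of a Gröbner basis provides some $g \in G$ with $\lm(g) \mid \lm(p) = \lm(s^d)$, directly contradicting the hypothesis. It follows that no such $p$ exists, whence $J \cap \K[s] = \{0\}$.

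The only real subtlety is the identification $\lm(p) = \lm(s^d)$, which rests on the multiplicativity of $\lm$ under the given monomial ordering and on $s \notin \K$. Once these are granted, the remainder is a one-line application of the divisibility property that characterises a Gröbner basis, so I do not foresee any further obstacles.
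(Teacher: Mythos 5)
Your proof is correct and follows exactly the same route as the paper: take a nonzero $p \in J \cap \K[s]$, observe that $\lm(p) = \lm(s^d)$ for $d = \deg p$, and invoke the Gr\"obner basis property of $G$ to produce the forbidden divisor. You merely spell out in more detail than the paper the multiplicativity argument justifying $\lm(p) = \lm(s^d)$, which the paper asserts in one line.
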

\begin{proof}
Let $0 \neq b \in J \cap \K[s]$.
Then $\lm(b) = \lm(s^k)$ for some $k \in \N_0$.
Since $b \in J$, there exists $g \in G$ such that $\lm(g) \mid \lm(b) = \lm(s^k)$.
\end{proof}

In the second situation however, we cannot in general state whether the intersection is trivial or not as the following example illustrates.

\begin{remark}
The converse of the previous lemma does not hold. For instance, consider $\K[x,y]$ and $J = \langle y^2 + x \rangle$.
Then $J \cap \K[y] = \{ 0 \}$ while $\{ y^2 + x\}$ is a Gr\"obner basis of $J$ for any ordering.
\end{remark}

In situation \ref{sit 2.1} though, the intersection is not zero as the following lemma shows, inspired by the sketch of the proof of Theorem \ref{b(s) is not zero} in \cite{SST00}.

\begin{lemma} \label{lemma minimal polynomial}
Let $J \cdot s \subset J$ and $\dim_{\K}(\mathrm{End}_{A}(A/J)) < \infty$. Then $J \cap \K[s] \neq \{ 0 \}$.
\end{lemma}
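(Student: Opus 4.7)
The plan is to interpret the hypothesis $J \cdot s \subset J$ as saying that right multiplication by $s$ descends to a well-defined operator on the quotient module $A/J$, and then use the finite-dimensionality assumption to force a polynomial relation.

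First I would verify that the map $\rho_s \colon A/J \to A/J$, $a + J \mapsto a s + J$, is well-defined and $A$-linear. Well-definedness follows directly from $J \cdot s \subset J$: if $a - a' \in J$, then $(a-a')s \in J s \subset J$. Left $A$-linearity is immediate from the associativity of multiplication in $A$: for any $b \in A$, $\rho_s(b(a+J)) = bas + J = b\,\rho_s(a+J)$. Thus $\rho_s \in \mathrm{End}_A(A/J)$.

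Next, since by hypothesis $\dim_{\K} \mathrm{End}_A(A/J) < \infty$, the infinite family $\{\rho_s^k\}_{k \in \N_0}$ is $\K$-linearly dependent. Choose $d \in \N$ minimal such that $1, \rho_s, \ldots, \rho_s^d$ are linearly dependent; then there exist $c_0, \ldots, c_d \in \K$, not all zero (and with $c_d \neq 0$ by minimality), such that $\sum_{i=0}^d c_i \rho_s^i = 0$ in $\mathrm{End}_A(A/J)$. Note that $\rho_s^i$ is just right multiplication by $s^i$, because $s$-multiplications commute with each other.

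Evaluating this relation on the class $1 + J \in A/J$ yields
\[
\sum_{i=0}^d c_i s^i + J = 0 \in A/J,
\]
so $b(s) := \sum_{i=0}^d c_i s^i \in J$. Since $b(s)$ is a nonzero polynomial in $s$, it lies in $\K[s] \setminus \{0\}$, and therefore $b(s) \in J \cap \K[s]$ is a nonzero element, proving the claim.

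The only subtle point in the argument is checking that $\rho_s$ really is an $A$-module endomorphism (as opposed to merely a $\K$-linear map), which is precisely what the hypothesis $J \cdot s \subset J$ is designed to guarantee; the rest is a standard Cayley--Hamilton style argument using finiteness of the endomorphism algebra.
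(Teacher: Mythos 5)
Your proof is correct and follows essentially the same route as the paper: interpret $J\cdot s\subset J$ as saying right multiplication by $s$ is a well-defined $A$-module endomorphism of $A/J$, use $\dim_\K \mathrm{End}_A(A/J)<\infty$ to obtain a nonzero polynomial relation (the minimal polynomial), and evaluate at $1+J$ to land in $J\cap\K[s]$. The only cosmetic difference is that you spell out the linear-dependence argument where the paper simply invokes the existence of the minimal polynomial.
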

\begin{proof}
Consider the right multiplication with $s$ as a map $A/J \ra A/J$ which is a well-defined $A$-module endomorphism of $A/J$ as $a-a' \in J$ implies that $(a-a')s \in J \cdot s \subset J$, which holds by assumption for all $a,a' \in A$.
Since $\mathrm{End}_{A}(A/J)$ is finite dimensional, linear algebra guarantees that this endomorphism has a well-defined non-zero minimal polynomial $\mu$.
Moreover, $\mu$ is precisely the monic generator of $J \cap \K[s]$ as $\mu(s) = [0]$ in $A/J$, hence $\mu(s) \in J \cap \K[s]$, and $\deg(\mu)$ is minimal by definition.
\end{proof}

\begin{remark} \label{End(A/J)}
In particular, the lemma holds if $A/J$ itself is a finite dimensional $A$-module.
In the case where $A$ is a Weyl algebra and $A/J$ is a holonomic module, we know that 
$\dim_{\K}(\mathrm{End}_{A}(A/J)) < \infty$ holds (e.~g. \cite{SST00}).
\end{remark}

By the proof of the lemma, we have reduced our problem of intersecting an ideal with a subalgebra generated by one element to a problem from linear algebra, namely to the one of finding the minimal polynomial of an endomorphism.

\begin{proof}[Proof of Theorem \ref{b(s) is not zero}]
Let $0 \neq w \in \R^n_{\geq 0}, I \subset D$ such that $D/I$ is a holonomic module, $J := \inw(I)$ and $s := \sum_{i=1}^n w_i x_i \d_i$.
Without loss of generality let $0 \neq p = \sum_{\alpha,\beta} c_{\alpha,\beta} x^{\alpha} \d^{\beta} \in J$ be $(-w,w)$-homogeneous.
Then we obtain for every monomial in $p$ by using the Leibniz rule
\begin{align*}\begin{split}
&x^{\alpha} \d^{\beta} x_i \d_i
= x^{\alpha+e_i} \d^{\beta+e_i} + \beta_i x^{\alpha} \d^{\beta}\\
&\quad= (\d_i x_i^{\alpha_i+1} - (\alpha_i + 1) x_i^{\alpha_i}) \frac{x^{\alpha}}{x_i^{\alpha_i}} \d^{\beta} 
	+ \beta_i x^{\alpha} \d^{\beta}\\
&\quad= (\d_i x_i - (\alpha_i + 1) + \beta_i) x^{\alpha} \d^{\beta}
= (x_i \d_i - \alpha_i + \beta_i) x^{\alpha} \d^{\beta}.
\end{split}\end{align*}
Put $m = -w \alpha + w \beta$ for some term $c_{\alpha,\beta} x^{\alpha} \d^{\beta}$ in $p$ where $c_{\alpha,\beta}$ is non-zero. Since $p$ is $(-w,w)$-homogeneous, $m$ does not depend on the choice of this term.
Hence,
\begin{align*}\begin{split}
&p \cdot s
= p \sum_{i=1}^n w_i x_i \d_i
= \sum_{i=1}^n w_i \sum_{\alpha,\beta} (x_i \d_i - \alpha_i + \beta_i) c_{\alpha,\beta} x^{\alpha} \d^{\beta} \\
&\, = s \cdot p + \sum_{i=1}^n \sum_{\alpha,\beta} w_i (- \alpha_i + \beta_i) c_{\alpha,\beta} x^{\alpha} \d^{\beta} 
 = (s + m) \cdot p \in J.
\end{split}\end{align*}
Therefore, $J \cdot s \subset J$ holds.
Since $D/J$ is holonomic \citep{SST00}, Remark \ref{End(A/J)} and Lemma \ref{lemma minimal polynomial} yield the claim.
\end{proof}

If one knows in advance that the intersection is not zero, the following algorithm can be used for computing.

\begin{algorithm}[\texttt{PrincipalIntersect}] \label{PrincipalIntersect}
\begin{algorithmic}
  \STATE 
	\REQUIRE $s \in A, J \subset A$ a left ideal such that $J \cap \K[s] \neq \{ 0 \}$.
	\ENSURE $b \in \K[s]$ monic such that $J \cap \K[s] = \langle b \rangle$
	\STATE $G :=$ a finite left Gr\"obner basis of $J$ (assume it exists)
	\STATE $i := 1$
	\LOOP
		\IF{there exist $a_0, \ldots, a_{i-1} \in \K$ such that\\
			\medskip
			$\phantom{if there }\NF(s^i,G) + \sum_{j=0}^{i-1} a_j \NF(s^j,G) = 0$}
			\medskip
  	 	\RETURN $b := s^i + \sum_{j=0}^{i-1} a_j s^j$  	
  	 \ELSE \STATE $i := i+1$
 		\ENDIF
	\ENDLOOP
\end{algorithmic}
\end{algorithm}

Note that because $\NF(s^i,G) + \sum_{j=0}^{i-1} a_j \NF(s^j,G) = 0$ is equivalent to $s^i + \sum_{j=0}^{i-1} a_j s^j \in J$, 
the algorithm searches for a monic polynomial in $\K[s]$ that also lies in $J$. This is done by going degree by degree through the powers of $s$ until there is a linear dependency. This approach also ensures the minimality of the degree of the output.
The algorithm terminates if and only if $J \cap \K[s] \neq \{ 0 \}$.
Note that this approach works over any field.

%
%
The check whether there is a linear dependency over $\K$ between the computed normal forms of the powers of $s$ can be done by means of linear algebra.

\subsubsection{Applications} \label{PIapps}

Apart from computing global $b$-functions, there are various other applications of Algorithm \ref{PrincipalIntersect}.

\paragraph{Solving zero-dimensional systems}

Recall that an ideal $I \subset \K[x_1,\ldots,x_n]$ is called ze\-ro-di\-men\-sional if $\K[x_1,\ldots,x_n] / I$ is finite dimensional as a $\K$-vector space. It is known (e.~g. by Lemma \ref{lemma minimal polynomial}) that in this case there exist $0 \neq f_i \in I \cap \K[x_i]$ for each $1 \leq i \leq n$, which implies that the cardinality of the zero-set of $I$ is finite.

In order to compute this zero-set, one can use the classical triangularization algorithms.
These algorithms require to compute a Gr\"obner basis with respect to some elimination ordering (like lexicographic one), which might be very hard.

By Algorithm \ref{PrincipalIntersect}, a generator of $I \cap \K[x_i]$ can be computed without these expensive orderings. Instead, any ordering, hence a better suited one, may be freely chosen.

A similar approach is used in the celebrated FGLM algorithm \citep{FGLM93}. See also \cite{NK99} for a different approach.

\paragraph{Computing central characters}

Let $A$ be an associative $\K$-algebra. The intersection of a left ideal with the center of $A$, which is isomorphic to a commutative polynomial ring, is important for many algorithms, among other for the computation of the central character decomposition of a finitely presented module (cf. \cite{Lev05}).
In the situation, where the center of $A$ is generated by one element (which is not seldom), we can apply Algorithm \ref{PrincipalIntersect} to compute the intersection (known to be often quite nontrivial) without engaging much more expensive Gr\"obner basis computations, which use elimination.

\begin{example}
Consider the universal enveloping algebra of the
Lie algebra $\mathfrak{sl}_2$, $A = U(\mathfrak{sl}_2,\K) = 
\K \langle  e,f,h \mid [e,f]=h,\; [h,e]=2e,\;[h,f]=-2f \rangle$. 
It is known, that over a field of characteristic 0, the center of $A$ is
$\K[4ef+h^2-2h]$. Consider a left  ideal $L$ and a two-sided ideal $T$, both generated
by $G = \{ e^{11}, f^{12}, h^5-10h^3+9h \}\subset A$. Then consider $A$-modules 
$M_L = A/L$ and $M_T = A/T$, which turn out to be finite-dimensional over $\K$. 
We are interested in intersecting $L,T$ with $Z(A)$ and factorizing the output polynomial in one variable.
The implementation of the Algorithm \ref{PrincipalIntersect} in the library \texttt{bfun.lib} is described in Section \ref{bfct-impl}.

\begin{verbatim}
LIB "ncalg.lib"; LIB "central.lib"; LIB "bfun.lib";
def A = makeUsl(2); setring A;   // U(sl_2,Q)
ideal Z = center(2);             // generators of deg <= 2
poly z = Z[1];                   // we know there is just 1 generator
ideal I = e^11,f^12,(h-3)*(h-1)*h*(h+1)*(h+3);
ideal L = std(I);                // left GB of I
vdim(L);                         // K-dimension of A/I
==> 559
vector vL = pIntersect(z,L);     // L \cap K[z]
ideal T = twostd(I);             // twosided GB of I
vdim(T);                         // K-dimension of A/T
==> 21
vector vT = pIntersect(z,T);     // T \cap K[z]
ring r = 0,z,dp;                 // commutative univariate ring
// pretty-print factorization of polynomials:
print(matrix(factorize(vec2poly(imap(A,vT)),1)));
==> z-3,z,z-15
print(matrix(factorize(vec2poly(imap(A,vL)),1)));
==> z-3,z,z-440,z-8,z-48,z-168,z-15,z-99,z-120,
    z-255,z-483,z-575,z+1,z-399,z-143,z-195,z-63,
    z-80,z-288,z-360,z-224,z-323,z-35,z-24
\end{verbatim}

Note, that all the computations, thanks to Algorithm \ref{PrincipalIntersect}, were completed
in a couple of seconds, while the Gr\"obner-driven approach was still running after 20 minutes.
\end{example}

\subsection{Intersecting an ideal with a multivariate subalgebra} 
\label{Imulti}

We now consider the case where we intersect $J$ with the subalgebra $\K[s] = \K[s_1,\ldots,s_r]$ of an associative $\K$-algebra $A$ for nonconstant, pairwise commuting $s_1,\ldots,s_r \in A$.

The following result is a consequence of a well-known characterization of zero-di\-men\-sion\-al ideals.
\begin{lemma}
The ideal $J \cap \K[s]$ is zero-dimensional if and only if for all $1 \leq i \leq r$ there exist $f_i \in J$ such that $\lm(f_i) = s_i^{d_i}$ for some $d_i \in \N_0$.
\end{lemma}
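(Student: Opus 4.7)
The plan is to reduce the statement to the classical characterization of zero-dimensional ideals in a commutative polynomial ring, applied to the subalgebra $\K[s] \cong \K[s_1,\ldots,s_r]$ of $A$.

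First I would note that since the $s_i$ commute pairwise, $\K[s]$ is a commutative polynomial subring of $A$, and $J \cap \K[s]$ is an ideal of $\K[s]$: left-absorption follows from $J$ being a left ideal of $A$ together with $\K[s] \subset A$, and right-absorption is automatic by commutativity of $\K[s]$. Hence the question becomes an intrinsic statement about the commutative ideal $I := J \cap \K[s] \subset \K[s_1,\ldots,s_r]$.

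Next I would invoke the standard fact that $I$ is zero-dimensional if and only if for each $i$ there exists a nonzero univariate polynomial $g_i \in I \cap \K[s_i]$. The ``only if'' direction uses linear dependence of $\{s_i^k\}_{k \ge 0}$ modulo $I$ in the finite-dimensional quotient, producing the desired univariate element. The ``if'' direction shows that the classes of monomials $\{s^{\alpha} : 0 \le \alpha_i < \deg(g_i)\}$ span $\K[s]/I$, which is a finite set. Equivalently, the initial ideal $\lm(I)$ with respect to any monomial ordering on $\K[s]$ must contain a pure power $s_i^{d_i}$ for each $i$, which is precisely the leading-monomial form of the condition stated in the lemma.

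The main subtlety I expect to confront is the interpretation of ``$\lm(f_i) = s_i^{d_i}$'': the ordering here must be understood as the one induced on $\K[s]$ (treating $s_1,\ldots,s_r$ as formal commuting variables), rather than an ordering on the ambient $A$, since elements like $s_i$ need not be monomials in $A$. Once this compatibility is clarified, the existence of $f_i \in J$ with $\lm(f_i) = s_i^{d_i}$ becomes exactly the existence of a nonzero univariate polynomial in $J \cap \K[s_i]$, and the equivalence follows directly from the classical characterization above.
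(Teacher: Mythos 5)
The paper gives no proof of this lemma at all --- it only remarks that the statement is ``a consequence of a well-known characterization of zero-dimensional ideals'' --- and your commutative-algebra argument (nonzero univariate elements $g_i \in I \cap \K[s_i]$ for each $i$, equivalently pure powers $s_i^{d_i}$ in the initial ideal of $I$, if and only if $\K[s]/I$ is finite dimensional) is exactly that characterization, correctly carried out for the commutative ideal $I = J \cap \K[s]$.

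The problem is your final bridging step, where you assert that the existence of $f_i \in J$ with $\lm(f_i) = s_i^{d_i}$ is ``exactly'' the existence of a nonzero element of $J \cap \K[s_i]$. Only one implication holds: a nonzero $g_i \in J \cap \K[s_i]$ is in particular an element of $J$ whose leading monomial is a pure power of $s_i$, which gives the ``only if'' direction of the lemma. The converse is false, and the paper's own remark a few paragraphs earlier supplies the counterexample in the case $r=1$: for $J = \langle y^2 + x \rangle \subset \K[x,y]$ and $s_1 = y$, the element $y^2+x \in J$ has $\lm(y^2+x) = y^2 = s_1^2$, yet $J \cap \K[y] = \{0\}$, which is not zero-dimensional. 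So the ``if'' direction of the lemma as literally stated (with $f_i$ ranging over all of $J$) fails; the statement is correct only under the reading $f_i \in J \cap \K[s]$, which is the version your argument actually establishes. You should either prove the lemma in that corrected form, or observe that the condition on leading monomials of elements of $J$ only yields the one-sided bound $\GKdim(\K[s]/(J\cap\K[s])) \geq \GKdim(\K[s]/(L(G)\cap\K[s]))$ via the leading ideal, as in the lemma that immediately follows in the paper.
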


\begin{lemma} For a finite left Gr\"obner basis $G$ of $J$,
\begin{align*}
	\GKdim(\K[s]) 
	&\geq \GKdim(\K[s] / (J \cap \K[s]))\\
	&\geq \GKdim(\K[s] / (L(G) \cap \K[s])).
\end{align*}
\end{lemma}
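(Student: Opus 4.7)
The plan is to treat the two inequalities separately. The first one, $\GKdim(\K[s]) \geq \GKdim(\K[s]/(J \cap \K[s]))$, is immediate: the right-hand side is a homomorphic image of the left-hand side, and $\GKdim$ of finitely generated commutative $\K$-algebras is monotone under surjections.

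For the second inequality, my strategy is to compare $J \cap \K[s]$ and $L(G) \cap \K[s]$ via the initial ideal inside the commutative ring $\K[s]$. First, I observe that $L(G) \cap \K[s]$ is genuinely an ideal of $\K[s]$ (a left $A$-ideal intersected with a commutative subalgebra is closed under multiplication by $\K[s]$). Next, viewing $\K[s] = \K[s_1, \ldots, s_r]$ as a commutative finitely generated $\K$-algebra, I induce on it the monomial ordering coming from $A$, using the convention (implicit in this section and visible in the preceding lemma's use of $\lm(f_i) = s_i^{d_i}$) that $s^\alpha := s_1^{\alpha_1} \cdots s_r^{\alpha_r}$ is itself a monomial of $A$. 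Under this convention, for $f \in \K[s]$ the $A$-leading monomial $\lm(f)$ again lies in $\K[s]$.

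The key step is then the containment
\[
\ini(J \cap \K[s]) \; \subseteq \; L(G) \cap \K[s] \qquad \text{inside } \K[s],
\]
which I would argue as follows: for any $f \in J \cap \K[s]$, one has $\lm(f) \in L(G)$ (because $G$ is a Gr\"obner basis of $J$) and $\lm(f) \in \K[s]$ (by the compatibility of the ordering above), so $\lm(f) \in L(G) \cap \K[s]$. Passing to the $\K$-span of these leading monomials gives $\ini(J \cap \K[s])$. Finally, invoking the classical commutative fact that an ideal and its initial ideal in $\K[s]$ share the same Hilbert function and therefore the same Krull/GK-dimension (Macaulay), combined with monotonicity of $\GKdim$ under enlargement of the ideal in $\K[s]$, one chains
\[
\GKdim(\K[s]/(J \cap \K[s])) = \GKdim(\K[s]/\ini(J \cap \K[s])) \geq \GKdim(\K[s]/(L(G) \cap \K[s])).
\]

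The main obstacle is the compatibility step: one must verify that the monomial ordering on $A$ can indeed be chosen so that leading monomials of elements of $\K[s]$ stay in $\K[s]$. This is tacit in the surrounding discussion and is automatic for the intended applications (such as $s = \sum w_i x_i \d_i$ under the $V$-filtration, where $s$ and its powers are themselves monomials in the induced ordering). If this compatibility were to fail, or if the $s_i$ were not algebraically independent, the direct initial-ideal argument would have to be replaced by a more careful filtered-to-associated-graded comparison, tracking $\gr\K[s] \subseteq \gr A$ and its intersection with $\gr J = L(G)$, and then relating $\gr\K[s]$ back to $\K[s]$; this is the sort of bookkeeping I would expect to be the genuinely delicate part of a fully general proof.
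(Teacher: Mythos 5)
Your proposal is correct and follows essentially the same route as the paper: the paper's one-line argument is precisely your key containment (for $f\in J\cap\K[s]$ some $g\in G$ has $\lm(g)\mid\lm(f)$, forcing $\lm(g)\in\K[s]$, so the leading ideal of $J\cap\K[s]$ sits inside the ideal generated by $L(G)\cap\K[s]$), with the Macaulay/initial-ideal dimension comparison left implicit. Your explicit flagging of the ordering-compatibility assumption ($\lm$ of an element of $\K[s]$ staying in $\K[s]$) is a tacit hypothesis in the paper as well, so no divergence there.
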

\begin{proof}
For all $f \in J \cap \K[s]$ there exists $g \in G$ such that $\lm(g) \mid \lm(f)$, 
which implies $\lm(g) \in \K[s]$ and thus, the claim follows.
\end{proof}
Note that the first inequality is strict if and only if $J \cap \K[s] \neq \{ 0 \}$.

We give a generalization of Algorithm \ref{PrincipalIntersect} to compute a partial Gr\"obner basis of $J \cap \K[s]$ up to a specified bound $k \in \N$.

\begin{algorithm}[\texttt{IntersectUpTo}] \label{IntersectUpTo}
\begin{algorithmic}
	\STATE 
	\REQUIRE $s_1,\ldots,s_r \in A$ pairwise commuting, 
		$J \subset A$ a left ideal, $k \in \N$ an upper degree bound
	\ENSURE a GB for $J \cap \K[s_1,\ldots,s_r]$ up to degree $k$
	\STATE $G :=$ a partial left Gr\"obner basis of $J$ consisting of elements up to degree $k$
	\STATE $d := 0$
	\STATE $B := \emptyset$
	\WHILE{$d \leq k$}
		\STATE $M_d := \{ s^{\alpha} \mid |\alpha| \leq d \}$
		\IF{there exist $a_m \in \K$, not all $0$, such that
			$\sum\limits_{m \in M_d} a_m \NF(m,G) = 0$}
			\IF{$\sum\limits_{m \in M_d} a_m m \notin \langle B \rangle$}
  	 		\STATE $B := B \cup \{\sum\limits_{m \in M_d} a_m m \}$
  	 	\ENDIF
 		\ENDIF
		\STATE $d := d+1$
	\ENDWHILE
	\RETURN $B$
\end{algorithmic}
\end{algorithm}

A couple of improvements can be made to speed up the computation time.

If $p \in B$ with $\lm(p) = m$ has been found, any monomial which is a multiple of $m$ 
can be discarded in the following iterations.

Let $G$ be a Gr\"obner basis of $J$ with respect to some fixed ordering $\prec$. 
By using $p \in J \cap \K[s]$ if and only if $\lm(p) \in L(G) \cap \K[s]$, 
one may disregard 
$\{ m \in M_d \mid \max_{\prec}(m' \in L(G) \cap M_d) \prec m \}$.

Further note that $\NF(m,G) = m$, if $m \notin L(G) \cap \K[s]$.

Using these improvements and choosing $\prec$ to be a degree ordering and the elements in $B$ to be monic, the output of the algorithm equals the reduced Gr\"obner basis of $J \cap \K[s]$ with respect to $\prec$ up to degree $k$. However, in general no termination criterion is known to us yet, that is apriori we do not know when we already have the complete needed basis of the intersection. Nevertheless, 
%
the termination is predictable if $J \cap \K[s]$ is a principal ideal in $\K[s]$. This situation often arises in the computation of \BS ideals, see Section \ref{BSideal}. Moreover, another possibility for the algorithm to stop will be when the set of monomials we consider becomes empty on some step, which is the case if and only if $J \cap \K[s]$ is zero-dimensional. 

As one can see, the results above can be generalized by replacing the commutativity condition for a subalgebra $S$ with the condition, that $S$ is a $G$-algebra in a $\K$-algebra $A$. This and further generalizations will be studied in the next articles. Note, that under some extra requirements the algorithm will terminate after finally many steps without setting an explicit degree bound. Hence, in such cases a generally complicated elimination with Gr\"obner bases can be replaced by much easier and predictable Gr\"obner-free approach. The latter will, of course, allow to solve harder computational problems.

As it was noted in \cite{LVint}, even the existence of a certain elimination ordering in $G$-algebras is not guaranteed. Consider the algebra $B = \K\langle x,y \mid yx=xy+y^2 \rangle$. Then the ordering condition of Def. \ref{GalgLie} says $x>y$ must hold for any ordering. Hence, we cannot use Gr\"obner basis in this $G$-algebra for computing the intersection of an ideal $I$ with
the subalgebra $\K[x]$, since the latter requires the use of ordering with $x<y$. One possibility
would be to consider $B$ as a $\K$-algebra with the ordering $x<y$ modulo the 
two-sided ideal, generated by $y^2 - yx + xy$. But this ideal has infinite two-sided Gr\"obner basis, hence doing the elimination via passing to $\K$-algebra setting is problematic, since it depends on the input ideal $I$.

Despite these complications, it is obvious, that the preimage of an ideal in a subalgebra does exist. Hence, Algorithm \ref{IntersectUpTo} is indeed the only computational possibility to get some information about such a preimage.

\section{Bernstein-Sato Polynomial of $f$} \label{globalBS}

Let $f \in \K[x_1,\ldots,x_n]$. One possibility to define the Bernstein-Sato polynomial of $f$ is to apply the global $b$-function for specific weights.

\begin{definition}
Let $B(s)$ denote the global $b$-function of the univariate Malgrange ideal $I_f$ of $f$ (cf. Section \ref{sAnn}) with respect to the weight vector $w = (1,0,\ldots,0) \in \R^{n+1}$, that is the weight of $\d t$ is $1$. Then $b(s) := B(-s-1)$ is called the \emph{global $b$-function} or the \emph{Bernstein-Sato polynomial} of $f$.
\end{definition}

By Theorem \ref{b(s) is not zero}, $b(s) \neq 0$ holds. Moreover, it is well known that $-1$ is always a root  of the Bernstein-Sato polynomial for nonconstant $f$ and Kashiwara proved that all its roots are negative rational numbers \citep{Kashiwara76/77}.

The following version of Bernstein's theorem \citep{Bernstein71} gives us another option to define the Bernstein-Sato polynomial.

\begin{theorem}[\cite{SST00}] \label{bfct and ann}
The Bernstein-Sato poly\-no\-mi\-al $b(s)$ of $f$ is the unique monic polynomial of minimal degree in $\K[s]$ satisfying the identity
\[
	P \bullet f^{s+1} = b(s) \cdot f^s \qquad \text{for some operator } P \in D[s].
\]
\end{theorem}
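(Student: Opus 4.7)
The plan is to prove that $b(s) := B(-s-1)$ (from the global $b$-function definition) is a nonzero polynomial satisfying the functional equation, and that it divides any other $b'(s) \in \K[s]$ with this property; monicity together with mutual divisibility then yields uniqueness and minimality. Existence of a nonzero $b(s)$ is guaranteed by Theorem \ref{b(s) is not zero}.

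\textbf{Forward direction.} Since $B(t\d t) \in \inw(I_f)$, pick $Q \in I_f$ with $\inw(Q) = B(t\d t)$. Because $t\d t$ has $w$-weight $0$ for $w = (1,0,\ldots,0)$, the tail $R := Q - B(t\d t)$ has strict $w$-weight $\leq -1$, so each of its monomials has strictly more $t$-factors than $\d t$-factors and can be left-factored as $t \cdot (\text{element of } V^0)$. Iterating the canonical splitting $V^0 = D[\tau] \oplus V^{-1}$ (arising from $V^0/V^{-1} \cong D[\tau]$ with $\tau := t\d t$ becoming central in the associated graded via $t^a \d t^a \equiv \tau(\tau-1)\cdots(\tau-a+1) \pmod{V^{-1}}$), I obtain a finite decomposition
\[
Q = B(\tau) + \sum_{k=1}^{N} t^k P_k(\tau), \qquad P_k \in D[\tau].
\]
Under the Mellin identification $s = -\tau - 1$, acting on $f^s$ in $M = D_{n+1}/I_f$ yields $\tau \bullet f^s = -(s+1) f^s$, hence $B(\tau) \bullet f^s = b(s) f^s$ and $P_k(\tau) \bullet f^s = \tilde P_k(s) \bullet f^s$ with $\tilde P_k(s) := P_k(-s-1) \in D[s]$. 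The relation $[t, -t\d t - 1] = t$ gives the commutation $t \cdot g(s) = g(s+1) \cdot t$ in $D_{n+1}$, which together with $t \bullet f^{s+j} = f^{s+j+1}$ produces
\[
t^k P_k(\tau) \bullet f^s = \tilde P_k(s+k) \bullet f^{s+k} = \bigl( \tilde P_k(s+k) f^{k-1} \bigr) \bullet f^{s+1}.
\]
Rearranging $Q \bullet f^s = 0$, the operator $P := -\sum_{k=1}^{N} \tilde P_k(s+k) f^{k-1} \in D[s]$ satisfies $P \bullet f^{s+1} = b(s) f^s$.

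\textbf{Converse direction.} Suppose $P'' \in D[s]$ satisfies $P'' \bullet f^{s+1} = b'(s) f^s$ for some nonzero $b'(s) \in \K[s]$. Then $P''(s) f - b'(s) \in \Ann_{D[s]}(f^s)$, which by the Mellin construction of Section \ref{newBM} equals $\phi^{-1}(I_f)$ for $\phi \colon s \mapsto -t\d t - 1$. Setting $B'(\tau) := b'(-\tau - 1)$, this lifts to
\[
Q'' := P''(-t\d t - 1) f - B'(t\d t) \in I_f.
\]
Both summands are $w$-homogeneous of weight $0$, so $Q''$ is as well and $\inw(Q'') = Q'' \in \inw(I_f)$. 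Since $t - f \in I_f$ yields $-f = \inw(t - f) \in \inw(I_f)$, and $\inw(I_f)$ is a left ideal, the term $P''(-t\d t - 1) f$ also lies in $\inw(I_f)$. Therefore $B'(t\d t) = P''(-t\d t - 1) f - Q'' \in \inw(I_f) \cap \K[\tau] = \langle B(\tau) \rangle$, so $B \mid B'$ and consequently $b(s) \mid b'(s)$.

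Combining the two directions, $b(s)$ is a nonzero polynomial satisfying the functional equation that divides every other such polynomial, which yields minimality of its degree and uniqueness of its monic representative. The main obstacle is the forward direction's noncommutative bookkeeping: one has to apply $t \cdot g(s) = g(s+1) \cdot t$ to move each $t^k$ past the $s$-polynomial coefficients and then refactor $f^{s+k}$ as $f^{k-1} \bullet f^{s+1}$, in order to reassemble the sum into a single operator in $D[s]$ applied to $f^{s+1}$ rather than a sum scattered across different exponent shifts.
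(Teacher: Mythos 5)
Your argument is correct. The paper itself imports this statement from \cite{SST00} without proof, but it does prove the $r$-variable generalization later (the lemma $b_f(s) = (-1)^{\deg B(s)}B(-s-r)$ in the variety section), and your two directions — decomposing a weight-zero lift of $B(t\partial_t)$ into $V$-homogeneous pieces $t^kP_k(\tau)$ and pushing them onto $f^{s+1}$, and conversely Mellin-transforming $P''f-b'(s)$ into $I_f$ and passing to initial forms using $f=-\inw(t-f)\in\inw(I_f)$ — are essentially that proof specialized to $r=1$, with the cosmetic difference that you work with the module action on $f^s$ where the paper performs left reduction modulo $t_i-f_i$. The only loose thread is normalization: $B(-s-1)$ is monic only up to the sign $(-1)^{\deg B}$, which the paper suppresses in the hypersurface definition but records explicitly in the variety case; this does not affect your divisibility argument or the uniqueness conclusion.
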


Since $P \cdot f - b(s) \in \Ann(f^s)$ holds, the Bernstein-Sato polynomial is the monic polynomial of minimal degree in $\K[s]$ that also lies in $\Ann(f^s) + \langle f \rangle$, hence $b(s)$ is the monic generator of this intersection.

\begin{remark}
\label{2methods}
There are the following choices for computing the Bernstein-Sato polynomial:
\begin{enumerate}
	\item Compute a Gr\"obner basis either of \\
		(a) $J = \ini_{(-w,w)}(I_f)$, which amounts to 1 Gr\"obner basis computation in $D_n$ or\\ 
		(b) $J = \ann(f^s) + \langle f \rangle$, which requires 2 Gr\"obner basis computations in $D_n[s]$.
	\item Intersect $J$ with $\K[\xi]$ where $\xi$ is either $s$ or $\sum_i w_i x_i \d_i$\\
		(a) the classical elimination-driven approach (needs 1 tough Gr\"obner basis computation) or\\
		(b) using Algorithm \ref{PrincipalIntersect} with no Gr\"obner basis computation. 
\end{enumerate}
It is very interesting to investigate the new approach for the computation
of \BS polynomials, arising as the combination of the two methods
\begin{enumerate}
	\item $\Ann(f^s)$ via Brian\c{c}on-Maisonobe (cf. Section \ref{sAnn} and \cite{LM08}),
	\item $(\Ann(f^s) + \langle f \rangle) \cap \K[s]$ via Algorithm \ref{PrincipalIntersect}.
\end{enumerate}
\end{remark}

For the computation of $\ini_{(-w,w)}(I_f)$ using the method of weighted homogenization as described in Section \ref{initial ideal}, the following choice of weights is proposed in \cite{Noro02} for an efficient Gr\"obner basis computation:
\begin{alignat*}{1}
	u &= (\deg_{\hat{u}}(f), \hat{u}_1, \ldots, \hat{u}_n ),\\
	v &= (1, \deg_{\hat{u}}(f) - \hat{u}_1 + 1, \ldots, \deg_{\hat{u}}(f) - \hat{u}_n + 1)
\end{alignat*}
such that the weight of $t$ is $\deg_{\hat{u}}(f)$ and the weight of $\d_t$ is $1$.
Here, $\hat{u} \in \R^n_{>0}$ is an arbitrary vector and $\deg_{\hat{u}}(f)$ denotes the weighted total degree of $f$ with respect to $\hat{u}$.
The vector $\hat{u}$ may be choosen heuristically in accordance to the shape of $f$ or by default, one can set $\hat{u} = (1,\ldots,1)$.

\section{Enhancements to steps of algorithms} \label{enhancements}

\subsection{Enhanced computation of $\Ann_{D[s]} (f^s)$}

Consider the set of generators $G:= \{ f \d t + s, \{ f_i \d t + \d_i \mid 1\leq i \leq n \}$ of an ideal $J$, coming from the \BM method. According
to the latter, we have to eliminate $\d t$ from $J$, that is to compute $J \cap D_n[s] = \Ann_{D[s]} (f^s)$.

Since any element $h$ from $J$ has a presentation as
\[
h = a_0 (f \d t+ s) + \sum_{i=1}^n a_i( f_i \d t + \d_i) = 
(a_0 f + \sum_{i=1}^n a_i f_i) \d t + (a_0 s + \sum_{i=1}^n a_i \d_i),
\] 

then for all $(a_0,a_1,\ldots,a_n) \in \Syz(\{f, f_1,\ldots,f_n \}) \cap \K[x,s]^{n+1}$
we obtain that $a_0 s + \sum_{i=1}^n a_i \d_i \in J \cap D_n[s]$.

Moreover, it is known, that indeed the
above elements generate the $\K[x]$-submodule of all the elements in $J \cap D_n[s]$, which total degree in $\d_i$ does not exceed 1.

Consider the set $T_f=\{ f, f_1,\ldots,f_n \} \subset \K[s] \subset D_n[s]$, a left ideal $D_n[s] T_f \subset D_n[s]$ and an ideal $\K[x] T_f \subset \K[x]$. Then a Gr\"obner basis of $\K[x] T_f$ is a Gr\"obner basis for $D_n[s] T_f$ as well. Denote by 
$S_f$ a set of generators of the module $\Syz(T_f) \subset \K[x]^{n+1}$.
By e.~g. generalized Schreyer's theorem \citep{LVdiss}, it follows that
the module of left syzygies $\LeftSyz_{D_n[s]}(T_f) = D_n[s] S_f$.

Let $\prec_1$ be a monomial module ordering on $\K[x]^{n+1}$, which is a position-over-term ordering, which gives preference to the 1st component. Since degree of $f$ is always by 1 bigger than the degree of $\tfrac{\d f}{\d x_i}$, the cofactors to $f$ have respectively smaller degree. 

\begin{algorithm}[\texttt{SannfsBMSyz}] \label{SannfsBMSyz}~
\begin{algorithmic}
	\REQUIRE $f\in \K[x]$
	\ENSURE $\Ann_{D_n[s]} (f^s)$
	\STATE $T_f := \{ f, \frac{\d f}{\d x_1},\ldots, \frac{\d f}{\d x_n} \} \subset \K[x]$
	\STATE $S_f := \Syz(T_f) \subset \K[x]^{n+1}$ \\
	\STATE $S_f :=$ \textsc{Gr\"obnerBasis}$(S_f)$ with respect to $\prec_1$
	\STATE create ring $D_n[s]$
	\STATE form $S_a := \{a_0 s + \sum_{i=1}^n a_i \d_i \}$ for every gen $a$ of $S_f$
	\STATE $S_a := $ \textsc{Gr\"obnerBasis}$(S_a)\in D[s]$ with respect to an ordering
	\STATE $G := \{ f \d t + s, \frac{\d f}{\d x_1} {\d t} + \d_1, \ldots, \frac{\d f}{\d x_n} {\d t} + \d_n \}\subset D \langle \d t, s \rangle$
	\STATE $G := $\textsc{Gr\"obnerBasis}$(G \cup S_a)$ with respect to an elimination ordering for $\d t$
	\RETURN $(G \cap D[s])$
\end{algorithmic}
\end{algorithm}  

\begin{remark}
One of major difficulties in the computation of Gr\"obner basis (especially with respect to an elimination ordering) is the need to compute numerous intermediate polynomials (of usually high degree and with big coefficients) in order to
come to a polynomial in the answer, which is often of small degree with coefficients of moderate size. Actually the set of generators $S_a$, which we compute in the syzygy-driven algorithm, generates already a part of the answer, though the corresponding ideal is, in general, not yet the complete answer. 

Computing a Gr\"obner basis of $S_a$ and adding it to the original set of generators $G$ allows to avoid at first place the discovery of elements of $S_a$ in the Gr\"obner basis computation of $G\cup S_a$ and hence allows to decrease the number of intermediate unpleasant polynomials, which are needed in such computation. This is important, since in the answer there are no polynomials of degree zero with respect to $\d_i$, that is $\Ann_{D_n[s]} (f^s) \cap \K[x,s]=0$ due to the fact, that the only element from the ring $\K[x,s]$, annihilating $f^s$, is zero. Hence with $S_a$ we add the set of elements of  smallest possible total degree in $\d_i$ that is of degree 1. Such elements are, in general, very hard to compute via the Gr\"obner-driven elimination. 

However, it is very interesting to derive conditions, under which the above algorithm is more efficient than the one of Brian\c{c}on-Maisonobe. We observe that it is not true for a couple of examples. See section \ref{compare}.
\end{remark}

\subsection{Enhanced computation of $b_f(s)$}


In the following Lemma we collect folklore results and supply them with short proofs
for the completeness of exposition.

\begin{lemma} \label{smallThings}
Let, as before, $f\in\K[x]\setminus\{0\}$.
\begin{enumerate}
\item \label{ratAnn} $\forall \ 1\leq i \leq n$ we have $f \d_i - s \frac{\d f}{\d x_i} \in \Ann_{D[s]} f^s$ and $ \frac{\d f}{\d x_i} \d_j -  \frac{\d f}{\d x_j} \d_i \in \Ann_{D[s]} f^s$
\item \label{minint}  For $f\in\K$, $b_f(s)=1$. For $f\in\K[x]\setminus\{\K\}$,  $(s+1) \mid b_f(s)$.
\item \label{reducedB}
\[
\langle \dfrac{b(s)}{s+1} \rangle = (\Ann f^s + \langle f, \dfrac{\d f}{\d x_1}, \ldots, \dfrac{\d f}{\d x_n} \rangle) \cap \K[s]
\]
\item \label{smooth} if $\K=\bar{\K}$, then $V(f)$ is smooth implies $b_f(s)=s+1$.
\end{enumerate}
\end{lemma}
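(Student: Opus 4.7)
I would handle the four parts in order.

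Part (\ref{ratAnn}) is a direct computation in the $D[s]$-module $\K[x,f^{-1}][s]\cdot f^s$: using $\d_i\bullet f^s=sf^{s-1}\d_if$, we get $(f\d_i)\bullet f^s=s(\d_if)f^s=(s\d_if)\bullet f^s$, and both $\d_if\cdot\d_j$ and $\d_jf\cdot\d_i$ applied to $f^s$ produce the symmetric expression $s(\d_if)(\d_jf)f^{s-1}$. For part (\ref{minint}), the constant case is witnessed by $P=1/c$. For non-constant $f$, specializing the functional equation $P\bullet f^{s+1}=b(s)f^s$ at $s=-1$ yields $P|_{s=-1}\bullet 1=b(-1)f^{-1}$; the left side is a polynomial while $f^{-1}\notin\K[x]$, forcing $b(-1)=0$ and hence $(s+1)\mid b(s)$.

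For part (\ref{reducedB}), set $J=\ann f^s+\langle f\rangle$ and $J'=J+\langle\d_1f,\ldots,\d_nf\rangle$, so that $\langle b(s)\rangle=J\cap\K[s]$ by Theorem \ref{bfct and ann}, and $b(s)=(s+1)\tilde b(s)$ by (\ref{minint}). For the inclusion $J'\cap\K[s]\subseteq\langle\tilde b\rangle$, I would exploit the identity $(s+1)\d_if\equiv\d_i\cdot f\pmod{\ann f^s}$, which follows from (\ref{ratAnn}) via $\d_i\cdot f=f\d_i+\d_if\equiv s\d_if+\d_if$. Then for any $c(s)=q+R_0f+\sum_iR_i\d_if\in J'\cap\K[s]$ with $q\in\ann f^s$, multiplying by the central element $s+1$ and rewriting $(s+1)\d_if$ modulo $\ann f^s$ shows $(s+1)c\in J\cap\K[s]=\langle b\rangle$, so $\tilde b\mid c$. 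For the reverse inclusion $\tilde b\in J'$, I would decompose $P=P_0+(s+1)P_1$ with $P_0:=P|_{s=-1}\in D$; specializing the functional equation at $s=-1$ gives $P_0\bullet 1=0$, which forces $P_0=\sum_iQ_i\d_i$. Then $P_0\bullet f^{s+1}=\sum_iQ_i\bullet[(s+1)(\d_if)f^s]=(s+1)R\bullet f^s$ with $R:=\sum_iQ_i\d_if\in J'$, so $(s+1)\tilde b(s)f^s=(s+1)(R+P_1f)\bullet f^s$; torsion-freeness of $\K[x,f^{-1}][s]\cdot f^s$ over $\K[s]$ cancels the factor $s+1$, yielding $\tilde b(s)-(R+P_1f)\in\ann f^s$ and hence $\tilde b\in J'$.

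For part (\ref{smooth}), smoothness of $V(f)$ over $\K=\bar{\K}$ gives $V(f)\cap V(\d_1f,\ldots,\d_nf)=\emptyset$, so the Nullstellensatz yields $1\in\langle f,\d_1f,\ldots,\d_nf\rangle\subseteq J'$; by (\ref{reducedB}) this forces $\tilde b=1$, i.e.\ $b_f(s)=s+1$. The main obstacle is the inclusion $\tilde b\in J'$ in (\ref{reducedB}): the functional equation for $b(s)$ must be upgraded to one for $\tilde b(s)$, which requires both the evaluation trick $P=P_0+(s+1)P_1$ (using $b(-1)=0$ to guarantee $P_0\bullet 1=0$) and the torsion-freeness of the module over $\K[s]$ to actually cancel the $s+1$ factor.
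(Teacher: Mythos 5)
Your proof is correct and follows essentially the same route as the paper's: a direct computation for (\ref{ratAnn}), the specialization $s\mapsto -1$ of Bernstein's functional equation for (\ref{minint}), the identity $(s+1)\frac{\d f}{\d x_i}\equiv \d_i\cdot f \pmod{\Ann f^s}$ together with $\K[s]$-torsion-freeness of the module to trade the factor $s+1$ for (\ref{reducedB}), and the Nullstellensatz for (\ref{smooth}). The only differences are cosmetic (you split $P=P_0+(s+1)P_1$ with $P_0=P|_{s=-1}$ where the paper separates the $\d$-free part of $P$) and that you spell out the reverse inclusion $J'\cap\K[s]\subseteq\langle b(s)/(s+1)\rangle$, which the paper leaves implicit in ``the claim follows.''
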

\begin{proof}
We use shortcut $f_i := \dfrac{\d f}{\d x_i}$. Consider surjective $\K$-alg homomorphism
$\pi_{\alpha}: D[s] \to D$, $s \mapsto \alpha$ and apply it to the inclusion
${}_{D[s]}\langle P(s) f - b(s) \rangle \subset \Ann_{D[s]} f^{s}$.
Then we have an inclusion 
\[
\pi_{\alpha}(\langle P(s) f - b(s) \rangle) =
\langle P(\alpha) f - b(\alpha) \rangle \subset (\Ann_{D[s]} f^{s})\mid_{s=\alpha} \subseteq
\Ann_{D} f^{\alpha}
\]
\begin{enumerate}
\item 
Direct calculation. 
\item 
By using $\pi_{-1}$ from above, we obtain $P(-1) = b(-1) f^{-1}$ modulo $\Ann_D f^{0}  =$  $\langle \d_1,\ldots, \d_n \rangle$. Hence $P(-1) = p(x) \in \K[x]$ 
and $p(x) = b(-1) f^{-1}$, which can be true only in two cases:\\
1. $f\in \K^*$, then $P(s) = f^{-1} \in \K$ and $b(s)=1$, \\
2. $f\not\in \K^*$, then $b(-1) = 0$.
\item 
Let us write $P(s) = \sum_i P_i \d_i + P_0$ for $P_0\in\K[x,s]$ and $P_i \in D[s]$. 
Computing modulo $\Ann_{D[s]} f^s$ and using (\ref{ratAnn})
we can present $P(s)f = P_0 f + \sum_i P_i \d_i f = P_0 f + (s+1)\sum_i P_i f_i$. 
By (\ref{minint}) $b(-1)=0$, hence by specializing $s$ to $-1$ in Bernstein's equation we get
$P(-1)\bullet 1 = b(-1)f^{-1} = 0$. Thus $P(-1) \in \Ann_{D[s]} (1) = \langle \d_1, \ldots, \d_n \rangle$. In particular, $P_0(-1) = 0$ and hence $s+1 \mid P_0 \in\K[x,s]$. 
Moreover,
\[
\sum_i P_i f_i + \tfrac{P_0}{s+1} f - \tfrac{b(s)}{s+1} \in \Ann_{D[s]} f^s
\]
and the claim follows.
\item Since $I = I(Sing(V(f))) = \langle f,  f_1, \ldots, f_n \rangle \subset \K[x]$, smoothness takes place when $1\in I$, hence by (\ref{reducedB}) we have $1 \in \langle \frac{b(s)}{s+1} \rangle$ 
and thus, $b(s) = s+1$.
\qedhere
\end{enumerate}
\end{proof}

\begin{lemma} \label{compareReducedB}
For a fixed algorithm, which computes $\Ann_{D[s]} (f^s)$, let us consider two ideals $I_1 = \Ann_{D[s]} (f^s) + \langle f \rangle$ and $I_2 = \Ann_{D[s]} (f^s) + \langle f, \frac{\d f}{\d x_1}, \ldots, \frac{\d f}{\d x_n} \rangle$ (note that $I_1, I_2 \subset D[s])$. There are two following algorithms, which take an ideal and a polynomial as input and return Bernstein-Sato polynomial, namely 
\begin{itemize}
\item[] \textit{Algorithm 1. } $b(s) =$ \textsc{pIntersect}$(I_1,s)$.
\item[] \textit{Algorithm 2. } $b(s) = (s+1) \cdot$ \textsc{pIntersect}$(I_2,s)$.
\end{itemize}
Then the Algorithm 2 is more efficient
than the Algorithm 1.
\end{lemma}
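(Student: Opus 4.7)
The approach is to compare both algorithms against the same reference, namely $b(s)$ itself, and to show that Algorithm 2 does strictly less work in the loop of Algorithm \ref{PrincipalIntersect}. First I verify correctness of both. For Algorithm 1, Theorem \ref{bfct and ann} says that $b(s)$ is precisely the monic generator of $I_1 \cap \K[s]$, so $\textsc{pIntersect}(I_1,s)=b(s)$. For Algorithm 2, part (\ref{reducedB}) of Lemma \ref{smallThings} gives $I_2 \cap \K[s] = \langle b(s)/(s+1)\rangle$, which is a polynomial in $\K[s]$ by part (\ref{minint}); multiplying by $(s+1)$ reconstructs $b(s)$.

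Next I compare the cost of the two \textsc{pIntersect} calls. The loop in Algorithm \ref{PrincipalIntersect} proceeds degree by degree, computing $\NF(s^0,G), \NF(s^1,G), \ldots$ and stopping at the smallest $d$ for which these normal forms become linearly dependent over $\K$; this $d$ is the degree of the monic generator of the intersection. Thus Algorithm 1 terminates at $d_1=\deg_s b(s)$ while Algorithm 2 terminates at $d_2=\deg_s(b(s)/(s+1))=d_1-1$. Algorithm 2 therefore performs one fewer normal form computation, one fewer linear-algebra step, and keeps one fewer vector in memory. This is the quantitative saving that proves the claim.

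In addition, each individual normal form computation in Algorithm 2 is no more expensive than in Algorithm 1. Indeed, from $I_1 \subset I_2$ and reduced Gr\"obner bases $G_1, G_2$ one obtains the inclusion $L(G_1) \subset L(G_2)$ of leading-term ideals, whence the standard $\K$-basis of $D[s]/I_2$ is a subset of that of $D[s]/I_1$. Consequently, for every $p \in D[s]$ the support of $\NF(p,G_2)$ is contained in the support of $\NF(p,G_1)$, so the reductions are at least as short and the $\K$-linear dependency test on a smaller spanning set is cheaper. Combined with the saving of one full iteration, this shows that Algorithm 2 performs no more work than Algorithm 1 at any stage and strictly less at the last.

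The main subtlety is what ``more efficient'' should formally mean, since one must account not only for the \textsc{pIntersect} loop but also for the prior Gr\"obner basis computation of $I_2$ versus $I_1$. Here the overhead is mild: a Gr\"obner basis of $I_2$ can be obtained by feeding the additional commutative generators $f, \partial f/\partial x_1, \ldots, \partial f/\partial x_n$ as new reducers into a Gr\"obner basis computation already started with $G_1$. Those generators have $\partial$-degree zero and low total degree, so they can only simplify subsequent reductions. Therefore the real content of the lemma, which is the core step proven above, is the strict gain of one degree and generically smaller normal forms in the \textsc{pIntersect} step.
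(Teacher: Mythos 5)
Your overall strategy coincides with the paper's: its proof rests on exactly the two observations you make, namely that Algorithm~2 performs one normal form computation less (and that of the element of highest degree), and that its normal forms are taken with respect to the larger ideal $I_2 \supset I_1$, which makes the individual reductions cheaper. The paper supplements this with a concrete illustration: by Lemma~\ref{smallThings}~(\ref{ratAnn}), the element $f\d_i - s\frac{\d f}{\d x_i} \in \Ann_{D[s]}(f^s)$ reduces to $(s+1)\frac{\d f}{\d x_i}$ modulo $I_1$ but to $0$ modulo $I_2$, and since $(s+1)I_2 \subset I_1 \subset I_2$, the Gr\"obner basis computation for $I_1$ drags the factor $(s+1)$, hence larger polynomials, through the computation. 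Your explicit correctness check via Lemma~\ref{smallThings}~(\ref{minint}), (\ref{reducedB}) is a welcome addition that the paper leaves implicit.

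One intermediate assertion of yours is false, however. From $I_1 \subset I_2$ you correctly deduce $L(G_1) \subset L(G_2)$, hence that the standard monomials of $I_2$ form a subset of those of $I_1$; but it does not follow that the support of $\NF(p,G_2)$ is contained in the support of $\NF(p,G_1)$ for every $p$. For instance, take $I_1 = \langle x^2 \rangle \subset I_2 = \langle x+y,\, x^2\rangle$ in $\K[x,y]$ with the lexicographic ordering $x \succ y$: then $\NF(x,G_1)=x$ while $\NF(x,G_2)=-y$, and $\{y\}\not\subseteq\{x\}$. The correct, and for your purposes sufficient, statement is only that $\NF(p,G_2)$ lies in the $\K$-span of the standard monomials of $I_2$, a subspace of the span of the standard monomials of $I_1$, so the reductions and the linear-dependence test operate in a smaller space. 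Since the lemma is in any case an informal efficiency claim and this repair is immediate, the flaw does not affect the conclusion, but the containment of supports as you stated it should not be asserted.
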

\begin{proof}
Performing the principal intersection, the Algorithm 2 will compute one
normal form less (of an element of high degree) than the Algorithm 1. Moreover, the normal forms in Algorithm 2 are taken with respect to a bigger ideal, what makes respective computations easier as well.
By Lemma \ref{smallThings} (\ref{ratAnn}) we know that $f \d_i - s f_i \in \Ann_{D[s]} (f^s)$. Hence these elements can be reduced to $(s+1)\cdot f_i$ in $I_1 = \Ann_{D[s]} (f^s) + \langle f \rangle$. Meanwhile in $I_2 = \Ann_{D[s]} (f^s) + \langle f, f_1,\ldots,f_n \rangle$ we reduce 
$f \d_i - s f_i$ automatically to zero. Note, that indeed $(s+1) I_2 \subset I_1 \subset I_2$ holds and hence, in the process of computing a Gr\"obner basis of $I_1$ (Algorithm 1), the operations with commutative elements of the kind $(s+1)f_i$ will in general keep the factor $(s+1)$, thus operating with larger polynomials of higher degree. Hence the claim. 
\end{proof}


\subsection{Enhanced computation of normal forms}

When computing normal forms of the form $\NF(s^i,J)$ like in Algorithm \ref{PrincipalIntersect} we can speed up the reduction process by making use of the previously computed normal forms.

\begin{lemma} \label{NF computing 1}
Let $A$ be a $\K$-algebra, $J \subset A$ a left ideal and let $f \in A$.
For $i \in \N$ put $r_i = \NF(f^i,J)$, $q_i = f^i - r_i \in J$ and $c_i = \frac{\lc(q_i r_1)}{\lc(r_1 q_i)}$ provided $r_1 q_i \not=0$. For $r_1 q_i=0$ we put $c_i=0$.
Then we have for all $i \in \N$
\[
	r_{i+1} 
	= \NF(f r_i,J)
	= \NF([f^i-r_i,r_1]_{c_i} + r_i r_1,J).
\]
\end{lemma}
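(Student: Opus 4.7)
The plan is to prove both equalities by writing $f^{i+1}$ in two ways---as $f\cdot f^i$ and as $f^i\cdot f$---together with the decompositions $f=q_1+r_1$ and $f^i=q_i+r_i$, where $q_1,q_i\in J$. Throughout, the only property of $J$ I use is that it is a \emph{left} ideal, $AJ\subset J$, so I must be careful about which side a $J$-element is multiplied from.

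First I would establish $r_{i+1}=\NF(fr_i,J)$: expanding $f\cdot f^i=fq_i+fr_i$ and noting $fq_i\in J$ by left absorption gives $f^{i+1}\equiv fr_i\pmod{J}$, hence $\NF(f^{i+1},J)=\NF(fr_i,J)$.

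For the second equality I expand in the opposite order: $f^i\cdot f=(q_i+r_i)(q_1+r_1)=q_iq_1+r_iq_1+q_ir_1+r_ir_1$. Since $q_1\in J$ and $J$ is a left ideal, both $q_iq_1$ and $r_iq_1$ lie in $J$, so $f^{i+1}\equiv q_ir_1+r_ir_1\pmod{J}$. Moreover $c_ir_1q_i\in J$ for any scalar $c_i$ because $q_i\in J$, and therefore $[q_i,r_1]_{c_i}+r_ir_1 = q_ir_1-c_ir_1q_i+r_ir_1 \equiv q_ir_1+r_ir_1 \equiv f^{i+1}\pmod{J}$, which yields the claimed equality of normal forms.

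There is no real obstacle---the whole proof is a bookkeeping check using associativity and the left-ideal property, with the mild subtlety of tracking which factor sits on which side of a $J$-element. The one observation worth recording is that the specific value of $c_i$ stated in the lemma is irrelevant for the correctness of the identity: any scalar works, since $c_ir_1q_i\in J$ regardless. The displayed choice $c_i=\lc(q_ir_1)/\lc(r_1q_i)$ is an efficiency device intended to cancel the leading monomials of $q_ir_1$ and $c_ir_1q_i$, keeping the argument of the subsequent $\NF$ call strictly smaller in leading term than $fr_i$, and thereby speeding up the inner loop of Algorithm~\ref{PrincipalIntersect}.
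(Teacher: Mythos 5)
Your proof is correct and follows essentially the same route as the paper's: both equalities come from expanding $f^{i+1}=f\cdot f^i=fq_i+fr_i$ and $f^{i+1}=f^i\cdot f=(q_i+r_i)(q_1+r_1)$ and discarding the terms that lie in $J$ by left absorption, exactly as in the paper. Your closing remark that the value of $c_i$ is irrelevant for correctness (since $r_1q_i\in J$) and serves only to cancel leading terms in the skew bracket is accurate and matches the intended role of $c_i$ in speeding up the normal form computation.
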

\begin{proof}
It holds that $f^{i+1} = f q_i + f r_i \ra f r_i$, which shows the first equation.
On the other hand, 
$f^{i+1} = q_i f + r_i f = q_i (q_1 + r_1) + r_i (q_1 + r_1) = q_i q_1 + q_i r_1 + r_i q_1 + r_i r_1
\ra q_i r_1 + r_i r_1 = (f^i - r_i) r_1 + r_i r_1 
\ra [f^i - r_i, r_1]_{c_i} + r_i r_1$, which proves the second equation.
\end{proof}

As a direct consequence, we obtain the following result for some $\K$-algebras of special importance.

\begin{corollary} \label{NF computing 2}
If $A$ is a $G$-algebra of Lie type (e.~g. a Weyl algebra),  then
\[
r_{i+1} = \NF(f r_i,J) = \NF([f^i-r_i,r_1] + r_i r_1,J) \text{  holds}.
\]
If $A$ is commutative, we have
\[
r_{i+1} = \NF(r_i r_1,J) = \NF(r_1^{i+1},J).
\]
\end{corollary}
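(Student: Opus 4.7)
The idea is that this is essentially a specialization of Lemma \ref{NF computing 1}: in both cases the skew-bracket $[\cdot,\cdot]_{c_i}$ collapses to the ordinary Lie bracket (equivalently, to zero in the commutative case), and the commutative statement requires one additional telescoping argument.

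For the $G$-algebra case, the main point is that the scalar $c_i = \lc(q_i r_1)/\lc(r_1 q_i)$ from Lemma \ref{NF computing 1} is always equal to $1$. To see this, I would recall two standard facts about $G$-algebras of Lie type: first, since the associated graded ring with respect to the admissible ordering $\prec$ is a commutative polynomial ring in $x_1,\dots,x_n$, the algebra is a Noetherian domain; second, the commutator $[a,b] = ab - ba$ satisfies $\lm([a,b]) \prec \lm(ab)$ by the ordering condition on the defining relations $d_{ij}$. Consequently, for any nonzero $a,b$ one has $\lm(ab) = \lm(ba) = \lm(a)\lm(b)$ and $\lc(ab) = \lc(ba) = \lc(a)\lc(b)$. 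Applying this with $a = q_i$ and $b = r_1$, provided both are nonzero, gives $r_1 q_i \neq 0$ (domain property) and $\lc(r_1 q_i) = \lc(q_i r_1)$, so $c_i = 1$. If $r_1 = 0$ or $q_i = 0$ the stated identity becomes trivial, so Lemma \ref{NF computing 1} yields $r_{i+1} = \NF([f^i - r_i, r_1] + r_i r_1, J)$, and the equality with $\NF(f r_i, J)$ is inherited from the lemma.

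For the commutative case, the bracket $[f^i - r_i, r_1]$ vanishes identically, so the first equation $r_{i+1} = \NF(r_i r_1, J)$ is immediate from Lemma \ref{NF computing 1}. For the second equation $r_{i+1} = \NF(r_1^{i+1}, J)$, I would argue by induction on $i$, or equivalently by the telescoping identity
\[
f^{i+1} - r_1^{i+1} = (f - r_1)\bigl(f^i + f^{i-1} r_1 + \cdots + r_1^i\bigr),
\]
which is valid in the commutative ring $A$. Since $f - r_1 = q_1 \in J$, the right-hand side lies in $J$, so $\NF(f^{i+1}, J) = \NF(r_1^{i+1}, J)$, i.e. $r_{i+1} = \NF(r_1^{i+1}, J)$.

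The only delicate point is verifying that $c_i = 1$ in a $G$-algebra of Lie type; everything else is bookkeeping on top of Lemma \ref{NF computing 1}. The key ingredient there is precisely the definition of a $G$-algebra of Lie type, namely that the relations $x_j x_i - x_i x_j = d_{ij}$ have $\lm(d_{ij}) \prec x_i x_j$, which forces multiplicativity of leading terms and rules out zero-divisors.
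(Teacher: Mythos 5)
Your proposal is correct and follows exactly the intended derivation: the paper offers no separate proof of Corollary \ref{NF computing 2}, presenting it as a direct consequence of Lemma \ref{NF computing 1}, and the content of that consequence is precisely your observation that in a $G$-algebra of Lie type the leading terms multiply symmetrically (no $q$-scalars in the relations), forcing $c_i=1$ and collapsing $[\cdot,\cdot]_{c_i}$ to the ordinary bracket, while in the commutative case the bracket vanishes. Your telescoping identity for $r_{i+1}=\NF(r_1^{i+1},J)$ is a harmless variant of the obvious induction $r_{i+1}=\NF(r_i r_1,J)=\NF(r_1^{i+1},J)$ and is equally valid.
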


Note, that computing Lie bracket $[f,g]$ both in theory and in practice is easier and faster, than to compute $[f,g]$ as $f\cdot g - g \cdot f$, see e.~g. \cite{LS03}.


\begin{remark}
We work on enhanced algorithms for the computation of the Bernstein operator from Theorem \ref{bfct and ann} as well and will report on the progress in forthcoming articles. 
\end{remark}


\section{Implementation}\label{bfct-impl}

In \cite{Noro02}, M. Noro proposed methods of modular change of ordering and 
modular solving of linear equations to be used in his approach, which is based on a kind of Algorithm \ref{PrincipalIntersect}.
In our implementation we decided to develop, test and enhance first 
purely characteristic $0$ methods, thus having the possibility to
adjoin modular methods later.

For the computation of $b$-functions and Bernstein-Sato polynomials, we offer the following procedures in the \textsc{Singular} library \texttt{bfun.lib}:

\texttt{bfct} 
computes $\inw(I_f)$ using weighted homogenization with weights $u,v$ for an optional weight vector $\hat{u}$ (by default $\hat{u} = (1,\ldots,1))$ as described above,
and then uses Algorithm \ref{PrincipalIntersect} with the enhancement from Corollary \ref{NF computing 2} for the intersection, where the occuring systems of linear equations are solved by means of linear algebra.

\texttt{bfctSyz}
computes $\inw(I_f)$ as in \texttt{bfct} and 
then uses Algorithm \ref{PrincipalIntersect}, where the linear equations are treated as polynomial ones and then solved by computing syzygies.

\texttt{bfctAnn}
computes $\Ann(f^s)$ via Algorithm \ref{SannfsBMSyz} and then computes the intersection of $\Ann(f^s) + \langle f, \frac{\d f}{\d x_1}, \ldots, \frac{\d f}{\d x_n} \rangle$ with $\K[s]$ analogously to \texttt{bfct}.

\texttt{bfctOneGB}
computes the initial ideal and the intersection at once using a homogenized elimination ordering (see also \cite{HH01}).

For the global $b$-function of an ideal $I$, \texttt{bfctIdeal} computes $\inw(I)$ using standard homogenization, i.~e.
weight\-ed homogenization where all weights are equal to $1$, and then proceeds the same way as \texttt{bfct}.
Recall that $D/I$ must be holonomic as in \cite{SST00}.

All these procedures work as the following example illustrates for \texttt{bfct} and the hyperplane arrangement  $xyz(y-z)(y+z)$.
\begin{verbatim}
LIB "bfun.lib";
ring r = 0,(x,y,z),dp;
poly f = x*y*z*(y-z)*(y+z);
bfct(f);
==> [1]:
==>    _[1]=-1
==>    _[2]=-5/4
==>    _[3]=-3/4
==>    _[4]=-3/2
==>    _[5]=-1/2
==> [2]:
==>    3,1,1,1,1
\end{verbatim}

\subsection{Comparison} \label{compare}

We use the polynomials in Table \ref{table:examples} for test examples, where 
we measure the total running time of each call to a system in a batch mode. In this
time the initialization of a system, loading of an example file, the actual computation
and the writing of an output are included.

\begin{table}[h t]
\begin{center}
\caption{Examples}
\begin{tabular}{|c|c|}
\hline
Example & Input \\
\hline
ab23 & $(z^2 + w^3)(2xz + 3yw^2)$ \\ 
cnu6 & $(xz+y)(x^6-y^6)$ \\ 
cnu7 & $(xz+y)(x^7-y^7)$ \\ 
tt43 & $x^4 + y^4 + z^4 – (xyz)^3$ \\ 
xyzcusp45 & $(xz+y)(x^4+y^5)$ \\ 
uw18 & $xyz(x-z)(z-y)(y+z)(2x+2y-z)$ \\ 
uw22 & $xyz(x+z)(x-y)(y-z)(y+z)$ \\ 
uw27 & $xyz (x+y) (-x+2y+z) (x+y+z) (y+z)$ \\ 
uw28 & $xyz (x+z) (-x+y+z) (x+y) (y+z)$ \\ 
uw29 & $xyz (x+z) (x+y) (-3x-y+2z) (y+z)$ \\ 
uw30 & $xyz (x-2z) (-x+y+z) (x-y) (y+z)$ \\ 
\hline
\end{tabular}
\label{table:examples}
\end{center}
\end{table}

The running times in the tables below are given in ``[hours[h]:]minutes:seconds'' format.
We use the shortcuts ${t}^{\times}$ when we have stopped the process after the time $t$ and ${t}^{\dagger}$ when the process ran out of memory after the time $t$. 

The tests were performed on a machine with 4 Dual Core AMD Opteron 64 Processor 8220 (2800 MHz) (only one processor could be used at a time) equipped with 32 GB RAM (at most 16 GB were allowed to us) running openSUSE 11 Linux. 

We first request the computation of $\Ann_{D_n[s]}(f^s)$ and the Bernstein-Sato polynomial comparing the different algorithms from Section \ref{enhancements}. We use the notation from Lemma \ref{compareReducedB}.

\begin{table}[h t]
\begin{center}
\caption{Comparison of the algorithms from Section \ref{enhancements}}
\begin{tabular}{|c||c|c||c|c|c|c|}
\hline
 & \multicolumn{2}{|c||}{$\Ann_{D_n[s]}(f^s)$} & \multicolumn{4}{|c|}{Bernstein-Sato polynomial} \\
 &  \multicolumn{2}{|c||}{}  & \multicolumn{2}{|c|}{\texttt{SannfsBMSyz} based} & \multicolumn{2}{|c|}{\texttt{SannfsBM} based}\\
Example & \texttt{SannfsBMSyz} & \texttt{SannfsBM} & Alg. 1 & Alg. 2 & Alg. 1 & Alg. 2 \\ 
\hline
ab23 & \textbf{0:01} & 0:02 & 0:05 & \textbf{0:03} & 0:06 & \textbf{0:03} \\ 
cnu6 & \textbf{0:01} & \textbf{0:01} & \textbf{0:01} & \textbf{0:01} & \textbf{0:01} & \textbf{0:01} \\ 
cnu7 & \textbf{0:09} & 0:12 & \textbf{0:11} & \textbf{0:11} & 0:19 & 0:12 \\ 
tt43 & \textbf{0:01} & \textbf{0:01} & 0:03 & \textbf{0:01} & 0:03 & \textbf{0:01} \\ 
xyzcusp45 & \textbf{0:56} & 1:22 & 1:30 & 1:16 & 1:20 & \textbf{1:10} \\ 
uw18 & 3:38 & \textbf{0:04} & 17:36 & 12:38 & 16:24 & \textbf{11:15} \\ 
uw22 & 2h:04:01$^{\dagger}$ & \textbf{0:04} & 2h:17:36$^{\dagger}$ & 2h:16:55$^{\dagger}$ & 2h:28:07$^{\dagger}$ & 2h:28:53$^{\dagger}$ \\ 
\hline
\end{tabular}
\end{center}
\label{table:compareAlg}
\end{table}

Further, we compare our implementations for the computation of the Bernstein-Sato polynomial with the existing ones in the computer algebra systems \textsc{Risa/Asir} and \textsc{Macaulay2}.

\begin{table}[h t]
\begin{center}
\caption{Comparison of different systems}
\begin{tabular}{|c|cc|c|cc|}
\hline
 & \multicolumn{ 2}{|c|}{\textsc{Asir}} & \textsc{Macaulay2} & \multicolumn{ 2}{|c|}{\textsc{Singular}} \\ 
Example & \texttt{bfunction} & \texttt{bfct} & \texttt{globalBFunction} & \texttt{bfct} & \texttt{bfctAnn} \\ 
\hline
ab23 & 0:23 & 0:17 & 0:27 & 0:17 & \textbf{0:04} \\ 
cnu6 & 1:39 & 0:54 & 14:03 & \textbf{0:01} & \textbf{0:01} \\ 
cnu7 & 7:32 & 4:46 & 4h:03:39$^{\times}$ & \textbf{0:06} & 0:20 \\ 
tt43 & 0:07 & 0:05 & 0:05 & 0:17 & \textbf{0:01} \\ 
xyzcusp45 & 1:52 & \textbf{1:10} & 4h:18:35 & 3:05 & 3:01 \\ 
uw18 & 7:22 & 29h:35:54$^{\times}$ & 4h:08:16$^{\times}$ & \textbf{6:21} & 12:27 \\ 
uw22 & \textbf{2:12} & 4h:04:05$^{\times}$ & 4h:01:43$^{\times}$ & 2:24 & 2h:42:02 \\ 
uw27 & \textbf{2:37} & 3h:05:14$^{\times}$ & 11h:45:18$^{\times}$ & 4:40 & 6h:55:35$^{\times}$ \\ 
uw28 & \textbf{1:36} & 10h:23:40$^{\times}$ & 3h:03:00$^{\times}$ & 3:10 & 3h:03:32$^{\times}$ \\ 
uw29 & \textbf{1:48} & 3h:51:14$^{\times}$ & 10h:23:42$^{\times}$ & 2:52 & 3h:01:30$^{\times}$ \\ 
uw30 & \textbf{1:58} & 5h:14:18$^{\times}$ & 3h:06:57$^{\times}$ & 3:09 & 3h:00:13$^{\times}$ \\ 
\hline
\end{tabular}
\label{table:compareImpl}
\end{center}
\end{table}

We have used \textsc{Risa/Asir} version 20071022,
\textsc{Macaulay2} version 1.1 with version 1.0 of \texttt{Dmodules.m2}
and \textsc{Singular 3-1-0} with \texttt{bfun.lib} version 1.13.

We would like to stress, that in our implementation of \texttt{bfun.lib} we have restricted ourselves to the use of characteristic zero methods, in order to see what can we achieve with them. The implementation of \textsc{Asir} by M.~Noro \citep{Noro02} uses the methods in prime characteristic, which can be applied to our implementation as well. However, the values in the table above indicate, that the difference in timings is not devastating for our cause.

As the timings in Table \ref{table:compareImpl} suggest, the approach via the initial ideal seems to be specially well suited for hyperplane arrangements, while it looks like that the performance of the annihilator based method is better for other kind of input (we took non-quasi\-ho\-mo\-ge\-neous singularities).
See \cite{Walther05} for details about generic arrangements. \newpage

\section{Bernstein-Sato Polynomial for a Variety}

In the paper of Budur, Musta{\c{t}}{\v{a}} and Saito \citep{BMS06},
using the theory of $V$-filtrations of Kashiwara \citep{Kashiwara83} and Malgrange \citep{Malgrange83},
the theory of the Bernstein-Sato polynomial of an arbitrary variety has been
developed. We present here the construction of the Bernstein-Sato polynomial
of an affine algebraic variety.

Given two positive integers $n$ and $r$, for the rest of this section we
fix the indices $i,j,k,l$ ranging between $1$ and $r$ 
and an index $m$ ranging between $1$ and $n$.

Let $f=(f_1,\ldots,f_r)$ be an $r$-tuple in $\K[x]^r$.
Consider a free $\K[x,s,\frac{1}{f}]$-module of rank one generated by the formal symbol
$f^s$ and denote it by $M = \K[x,s,\frac{1}{f}]\cdot f^s$.
Here, $s=(s_1,\ldots,s_r)$, $\frac{1}{f} = \frac{1}{f_1\cdots f_r}$
and $f^s = f_1^{s_1}\cdots f_r^{s_r}$. Moreover, we denote by 
$\K\langle S \rangle$ the universal enveloping algebra $U(\mathfrak{gl}_{\,r})$,
generated by the set of variables $S=(s_{ij})$, $i,j=1,\ldots,r$ subject to relations:
$$
  [s_{ij}, s_{kl}] = \delta_{jk} s_{il} - \delta_{il} s_{kj}.
$$
Then, we denote by $D_n\langle S \rangle := D_n \otimes_{\K} \K\langle S \rangle$, which
is a $G$-algebra of Lie type by e.~g. \cite{LS03}. 

The module $M$ has a natural structure of left $D_n \langle S \rangle$-module
when the variables $s_{ij}$ act in the following way $(i\leq j)$:
$$
  s_{ij} \bullet (G(s)\cdot f^s) = s_i \cdot G(s+\epsilon_j-\epsilon_i)\,
  \frac{f_j}{f_i} \cdot f^s\ \in \ M,
$$
where $G(s)$ is an element in $\K[x,s,\frac{1}{f}]$ and $\epsilon_j$ stands
for the $j$-th basis vector.

One can easily observe that the action of $s_{ii}$ on $M$
coincides with the multiplication by $s_i$ from the left.

Following the ideas by Malgrange, one can also consider
$M$ as a $D_n (R) \otimes_{\K} D_r(T)$-module, with $T = \K[t], t=(t_1,\ldots,t_r)$, $\partial t = (\partial {t_1},\ldots,\partial {t_r})$ and the action
\begin{equation}\label{tdtmodule}
\begin{array}{rcl}
  t_i \bullet  (G(s)\cdot f^s) &=& G(s+\epsilon_i)f_j\cdot f^s,\\
  \d {t_i} \bullet  (G(s)\cdot f^s) &=& -s_i G(s-\epsilon_i)\frac{1}{f_i}\cdot f^s.
\end{array}
\end{equation}
Observe that the action of $s_{ij}$ above corresponds to the action of $-\partial {t_i} \cdot t_j$.

\begin{theorem}[\cite*{BMS06}]
For every $r$-tuple $f=(f_1,\ldots,f_r)\in \K[x]^r$ there exists a non-zero
polynomial in one variable $b(s)\in \K[s]$ and $r$ differential operators
$P_1(S),\ldots,P_r(S)\in D_n\langle S \rangle$ such that
\begin{equation}\label{BSvariety}
\sum_{k=1}^r P_k(S) f_k \cdot f^s = b(s_1+\cdots+s_r) \cdot f^s \ \in \ M.
\end{equation}
\end{theorem}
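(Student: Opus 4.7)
The plan is to reduce the statement to Theorem~\ref{b(s) is not zero} applied in the Weyl algebra $D_{n+r}$, after identifying $M$ with a cyclic $D_{n+r}$-module via the Malgrange-type action~(\ref{tdtmodule}). Under this identification, $s_{ij} = -\partial t_i t_j$ and $A := D_n\langle S\rangle$ sits naturally inside the $V_0$-part of $D_{n+r}$ for the $V$-filtration with weight vector $w := (0,\ldots,0,1,\ldots,1) \in \R^{n+r}_{\ge 0}$ placing weight $1$ on each $t_i$. Set $N := D_{n+r} \cdot f^s \cong D_{n+r}/\ann_{D_{n+r}}(f^s)$; this cyclic module is a twisted localization, hence holonomic over $D_{n+r}$ (cf.~\cite{SST00}).

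Let $\theta := \sum_{i=1}^r t_i \partial t_i$. By Theorem~\ref{b(s) is not zero} applied to $\ann_{D_{n+r}}(f^s)$ with the weight~$w$, there exists a nonzero polynomial $\tilde b(\theta) \in \K[\theta]$ lying in $\inw(\ann_{D_{n+r}}(f^s))$. Lifting produces $P \in \ann_{D_{n+r}}(f^s)$ of the form $P = \tilde b(\theta) + P_{\mathrm{low}}$, where each monomial of $P_{\mathrm{low}}$ has strictly more $t$-factors than $\partial t$-factors. Reordering by means of $[\partial t_i, t_j] = \delta_{ij}$ preserves the $w$-weight (the commutator being a scalar), so $P_{\mathrm{low}}$ may be rewritten as $\sum_{k=1}^r R_k \cdot t_k$ for some $R_k \in D_{n+r}$.

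Applying $P$ to $f^s$ and using the direct consequences $\theta \cdot f^s = -(\sigma + r)\cdot f^s$ and $t_k \cdot f^s = f_k \cdot f^s$ of~(\ref{tdtmodule}), with $\sigma := s_1 + \cdots + s_r$, we obtain the identity $\tilde b(-\sigma - r) \cdot f^s = -\sum_{k=1}^r R_k \cdot (f_k f^s)$ in $M$. Setting $b(\sigma) := \tilde b(-\sigma - r)$, the remaining task is to rewrite each $R_k \cdot (f_k f^s)$ as $P_k(S) \cdot f_k f^s$ with $P_k(S) \in A$. This is achievable because the subalgebra $V_0 D_{n+r}$ is generated by $x_m, \partial_m, t_i,$ and $\partial t_i t_j$, whose actions on $N$ coincide with those of $x_m, \partial_m$, multiplication by $f_i$, and $-s_{ij}$, respectively---all elements of $A$.

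The main obstacle is this last rewriting step: the operators $R_k$ may a priori involve factors outside $V_0$, and moving $t_k$ inward produces lower-$V$-order corrections via $[R_k, t_k] \in V_{-1}$. The plan is an induction on the $V$-order of $R_k$, relying on the finite $V$-order of the lift $P$ and on the containment $[V_0, t_k] \subset V_{-1}$ to absorb commutator corrections iteratively modulo $\ann_{D_{n+r}}(f_k f^s)$. Once this induction terminates, we obtain $P_k(S) \in A$ satisfying $b(\sigma) \cdot f^s = \sum_{k=1}^r P_k(S) f_k \cdot f^s$, which is precisely~(\ref{BSvariety}).
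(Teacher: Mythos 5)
Your argument is correct, and it is in substance the argument this paper itself uses: the paper states this theorem with a citation to \cite{BMS06} and gives no proof of it directly, but the second half of its proof of the lemma $b_f(s)=(-1)^{\deg B(s)}B(-s-r)$ (in the subsection ``Another approach'') is exactly your construction --- take the nonzero $b$-function $B$ of the Malgrange ideal $I_f$ for the weight $w=((0,\dots,0),(1,\dots,1))$, which is guaranteed by Theorem \ref{b(s) is not zero} together with the holonomicity of $D/I_f$ (Lemma \ref{MaxAnn}), lift $B(t_1\d t_1+\cdots+t_r\d t_r)$ to $P\in I_f$ of $V$-degree zero, and push the lower-order part onto $\langle f_1,\dots,f_r\rangle$ after the substitution $t_i\d t_j\mapsto -s_{ji}-\delta_{ij}$. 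The one place where your write-up is more laborious than necessary is the final rewriting step: you peel off one $t_k$ at a time and run an induction on $V$-order to absorb the commutator corrections. The paper avoids this by using the full $V$-homogeneous decomposition at once, $P=B(\theta)+\sum_{|\alpha|\ge 1}Q_\alpha(t_i\d t_j)\,t^\alpha$ with each $Q_\alpha$ already in the degree-zero subalgebra $D_n\langle t_i\d t_j\rangle$, and then left-reducing modulo $\{t_i-f_i\}$ so that $t^\alpha$ becomes $f^\alpha\in\langle f_1,\dots,f_r\rangle$; this lands directly in $\ann_{D_n\langle S\rangle}(f^s)+\langle f_1,\dots,f_r\rangle$ with no induction needed. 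Your version does terminate (the lift has finite order), so this is a matter of economy rather than correctness; the only caveat worth flagging is that when you write $P_{\mathrm{low}}=\sum_k R_k t_k$ you must not stop there, since a bare $\d t_j$ surviving in $R_k$ would act on $f^s$ by $-s_jf_j^{-1}$ and leave you outside $D_n\langle S\rangle$ --- your induction is precisely what rules this out, and the homogeneous decomposition rules it out for free.
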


The {\em Bernstein-Sato polynomial} $b_f(s)$ of $f=(f_1,\ldots,f_r)$ is defined to be the monic polynomial of the lowest degree in the variable $s$ satisfying the equation (\ref{BSvariety}). 
It is demonstrated in \cite{BMS06}, that every root of the \BS polynomial
is rational. Let $I$ be the ideal generated by $f_1,\ldots, f_r$ and
$Z$ the (not necessarily reduced) algebraic variety associated with $I$ in $\K^n$.
Then it can be verified
that $b_f(s)$ is independent of the choice of a system of generators of $I$, and moreover
that $b_Z(s) = b_f(s - \codim Z + 1)$ depends only on $Z$. For instance, the
\BS polynomial of $f(x,y)\in \K[x,y]$ and \BS polynomial of the variety defined by the
ideal $\langle f(x,y),z \rangle$ coincide. However, due to the codimension, there is
a shift between $b_f(s)$ and $b_{(f(x,y),z)}(s)$.

Now, let us denote by $\ann_{D_n\langle S\rangle}(f^s)$ the left ideal of all elements
$P(S) \in D_n\langle S \rangle$ such that $P(S)\bullet f^s = 0$. We call this ideal
{\em the annihilator of $f^s$ in $D_n \langle S \rangle$}.
From the definition of the \BS polynomial
it is clear that
$$
  (\ann_{D_n \langle S \rangle} (f^s) + \langle f_1,\ldots,f_r \rangle)\cap
  \K[s_1+\cdots+s_r] = \langle b_f(s_1+\ldots+s_r)\rangle.
$$

Since the final intersection can be computed with the Principal Intersection method \ref{PrincipalIntersect}, 
the above formula provides an algorithm for computing the Bernstein-Sato polynomial
of affine algebraic varieties, once we know a Gr\"obner basis of the annihilator
of $f^s$ in $D_n \langle S \rangle$. The rest of this section is dedicated to
the solving of this problem.

\subsection{The annihilator of $f^s$ in $D_n\langle S \rangle$}

Consider the generalization of Malgrange's ideal $I_f$ associated with $f=(f_1,\ldots,f_r)$,
$$
  I_f = \bigg\langle t_i-f_i\, ,\ \partial_{m} +  \sum_{j=1}^r
  \frac{\partial f_j}{\partial x_m} \partial t_j \left|
  \begin{array}{c} 1\leq i \leq r\\ 1\leq m \leq n \end{array}\right.
  \bigg\rangle \ \subset \ D_n \langle t, \partial t \rangle
$$
Here we give a computer-algebraic proof to
the following Lemma, whose assertion is expected as in \cite{SST00} (for instance).

\begin{lemma} 
\label{MaxAnn}
$I_f$ is a maximal ideal in $D_n(R) \otimes_{\K} D_r (T)$ and
$I_f = \ann_{D_n(R) \otimes_{\K} D_r (T)} f^s$.
\end{lemma}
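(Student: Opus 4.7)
I would combine a direct verification of $I_f \subseteq \ann(f^s)$ with a change-of-variables argument that reduces the maximality of $I_f$ to the classical simplicity of the polynomial representation of a Weyl algebra. Write $D := D_n(R)\otimes_{\K} D_r(T)$. For the first inclusion, the module structure (\ref{tdtmodule}) gives $(t_i-f_i)\bullet f^s = f_i\,f^s - f_i\,f^s = 0$. Moreover $\partial t_j \bullet f^s = -s_j\,f_j^{-1}\,f^s$ and, by the chain rule, $\partial_m\bullet f^s = \sum_j s_j\,f_j^{-1}\,\tfrac{\partial f_j}{\partial x_m}\,f^s$, so the remaining generators $\partial_m + \sum_j \tfrac{\partial f_j}{\partial x_m}\partial t_j$ annihilate $f^s$ as well.

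Next I would define a $\K$-algebra automorphism $\phi\colon D\to D$ that fixes $x_m$ and $\partial t_j$ and sends
\[
t_i \longmapsto t_i + f_i,\qquad \partial_m \longmapsto \partial_m - \sum_{j=1}^{r}\tfrac{\partial f_j}{\partial x_m}\,\partial t_j.
\]
To see $\phi$ is well-defined one verifies the Weyl-algebra relations; the only commutator that is not immediate is $[\phi(\partial_m),\phi(t_i)]=0$, which holds because $[\partial_m,f_i]=\tfrac{\partial f_i}{\partial x_m}$ is cancelled by $-\sum_j \tfrac{\partial f_j}{\partial x_m}[\partial t_j,t_i] = -\tfrac{\partial f_i}{\partial x_m}$; similarly $[\phi(\partial_m),\phi(\partial_l)]=0$ follows from equality of mixed second partials of $f_k$. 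The explicit inverse $t_i\mapsto t_i - f_i$, $\partial_m\mapsto\partial_m + \sum_j\tfrac{\partial f_j}{\partial x_m}\partial t_j$ shows $\phi$ is bijective. Applying $\phi$ to the generators of $I_f$ yields $\phi(t_i-f_i)=t_i$ and $\phi(\partial_m+\sum_j\tfrac{\partial f_j}{\partial x_m}\partial t_j)=\partial_m$, so $\phi(I_f)=J$, where $J := \langle t_1,\ldots,t_r,\partial_1,\ldots,\partial_n\rangle$.

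Finally, $D/J$ is canonically the standard polynomial representation of the Weyl algebra $A_{n+r}$: setting $y_j := \partial t_j$ one has $D/J \cong \K[x_1,\ldots,x_n,y_1,\ldots,y_r]$ with $x_m,y_j$ acting by multiplication, $\partial_m$ as $\partial/\partial x_m$ and $t_i$ as $-\partial/\partial y_i$. This module is classically simple, hence $J$ is a maximal left ideal, and because $\phi$ is a ring automorphism $I_f = \phi^{-1}(J)$ is maximal as well. Since $f^s \neq 0$ in $M$, the ideal $\ann(f^s)$ is proper and contains $I_f$; maximality forces $\ann(f^s) = I_f$.

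The main obstacle is the bookkeeping in the middle step: carefully checking that $\phi$ respects every Weyl-algebra relation (and is genuinely an automorphism) and then explicitly identifying $D/J$ with the standard simple polynomial module. Both are routine, but the whole argument collapses without them. All the other pieces---the easy inclusion, passage from maximality of $J$ to that of $I_f$, and the conclusion $\ann(f^s)=I_f$---are then immediate.
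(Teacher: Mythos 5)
Your proof is correct, but it takes a genuinely different route from the paper. The easy inclusion $I_f\subseteq\ann(f^s)$ is handled the same way in both arguments. For maximality, however, the paper stays within its ``computer-algebraic'' framework: it invokes Lemma \ref{BMgb} to see that the given generators already form a Gr\"obner basis with leading monomials $\{t_j,\partial_m\}$, computes $\gkdim\bigl((D_n(R)\otimes_\K D_r(T))/I_f\bigr)=r+n$ from that initial data, and observes that any strictly larger proper left ideal would force the Gel'fand--Kirillov dimension below $r+n$, violating Bernstein's inequality. You instead conjugate by the explicit Weyl-algebra automorphism $t_i\mapsto t_i+f_i$, $\partial_m\mapsto\partial_m-\sum_j\tfrac{\partial f_j}{\partial x_m}\partial t_j$ (whose relation checks you correctly identify as the only real work, and which are the same commutator identities the paper verifies when running the Product Criterion in Lemma \ref{BMgb}), carrying $I_f$ onto $\langle t_1,\dots,t_r,\partial_1,\dots,\partial_n\rangle$, whose quotient is the standard simple polynomial representation of the Weyl algebra. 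Your argument is the more classical one (essentially the route of Saito--Sturmfels--Takayama), is self-contained modulo the simplicity of the polynomial module in characteristic zero, and avoids both Bernstein's inequality and GK-dimension; the paper's argument buys reuse of the Gr\"obner basis $S_1$ that it needs anyway for the subsequent elimination in Theorem \ref{SannFsVar}, which is why it is phrased the way it is. Both proofs are complete and correct.
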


\begin{proof}
$(t_i - f_i) \bullet f^s = f_i f^s - f_i f^s =0$. For any $m$, observe that
\[
\d_m \bullet (f_1^{s_1} \cdot \ldots \cdot f_{r}^{s_r}) = 
\sum_{j=1}^{r} \d_m \bullet (f_j^{s_j})  (f_1^{s_1} \cdot \ldots \cdot f_r^{s_r}) (f_j^{s_i})^{-1}=
\sum_{j=1}^{r} s_j \frac{\d f_j}{\d x_m} (f_j^{-1})  (f_1^{s_1} \cdot \ldots \cdot f_r^{s_r})
\]
Since $\d t_k$ acts on $f^s$ by the multiplication with $-s_j f_j^{-1}$, the generators
of the second type annihilate $f^s$, so $I_f \subseteq \ann_{D_n(R) \otimes_{\K} D_r (T)} f^s$.

By Lemma \ref{BMgb}, the set
of generators of $I_f$ is the same as the set $S_1$ in the Lemma and hence
there is a monomial ordering, such that $S_1$ is a Gr\"obner basis. In particular,
$I_f$ is a proper ideal. The set of leading monomials of $S_1$ is then $L=\{ t_j, \d_m \}$.
Since any monomial ordering on $\N^{2r+2n}$ can be presented as weighted degree
ordering with the weight vector $w$ with strictly positive entries (see e.~g. \cite{BGV}), 
we see that 
\[
\gkdim (D_n(R) \otimes_{\K} D_r(T))/ I_f = \gkdim \K[\{t_j, \d t_j, x_m, \d_m \}]/
\langle L \rangle = r+n.
\]
Assume the left ideal $I_f$ is not maximal, then there exists $p\not\in I_f$, such
that $I_f \subsetneq I_f + \langle p \rangle\subset D_n(R) \otimes_{\K} D_r (T)$. 
In particular, $\lm(p)$ does not include the elements of $L$ above. If 
$I_f + \langle p \rangle$ is a proper ideal, its set of leading monomials strictly
includes $L$ and has at least one element more. But then the dimension argument
as above shows, that $\gkdim (D_n(R) \otimes_{\K} D_r(T))/ (I_f+\langle p \rangle) 
< r+n$, what contradicts Bernstein's inequality. Hence $I_f$ is maximal and it
is equal to the annihilator.
\end{proof}



\begin{theorem}\label{SannFsVar}
Let $f=(f_1,\ldots,f_r)$ be an $r$-tuple in $\K[x]^r$ and $D_n \langle
\partial t, S \rangle$ the $\K$-algebra generated by $D_n$, $\partial t$
and $S$ with the corresponding non-commutative relations.
Then the following ideal of $D_n \langle S \rangle$ coincides with the
annihilator of $f^s$ in $D_n \langle S\rangle$:
$$
  \bigg[ D_n \langle \partial_t, S \rangle
  \Big( s_{ij} + \d {t_i} f_j \, 
,\ \partial_{m} + \sum_{k=1}^r
  \frac{\partial f_k}{\partial x_m} \partial t_k \left|
  \begin{array}{c} 1\leq i,j\leq r \\ 1\leq m\leq n \end{array}\right.
  \Big)\bigg] \cap D_n \langle S\rangle.
$$
\end{theorem}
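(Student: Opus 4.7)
The plan is to mimic the strategy of Section \ref{newBM} and reduce the statement to an application of the Preimage Theorem via Proposition \ref{preimageCor}. The crucial observation, recorded in the paragraph following (\ref{tdtmodule}), is that the action of $s_{ij}$ on $M$ agrees with the action of $-\partial t_i \cdot t_j$. This suggests considering the $\K$-algebra homomorphism
\[
\phi : A := D_n\langle S \rangle \longrightarrow B := D_n \otimes_{\K} D_r(T),
\]
which is the identity on $D_n$ and sends $s_{ij} \mapsto -\partial t_i \, t_j$, in analogy with the algebraic Mellin transform $s \mapsto -t\partial t - 1$ used for the Briançon--Maisonobe proof. Because $P \bullet f^s = \phi(P) \bullet f^s$ for every $P \in A$, we obtain $\phi^{-1}(\ann_B(f^s)) = \ann_A(f^s)$; and by Lemma \ref{MaxAnn} the target annihilator equals the Malgrange ideal $I_f$.

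I would then invoke Proposition \ref{preimageCor} with $A_1 = \K\langle S\rangle$, $B_1 = D_r(T)$, $C = D_n$, to conclude
\[
\ann_A(f^s) \;=\; \phi^{-1}(I_f) \;=\; (I_\phi + I_f) \cap A,
\]
where $I_\phi = \langle s_{ij} + \partial t_i \, t_j \rangle$ inside the enveloping $G$-algebra $E' = D_n \otimes_{\K}(\K\langle S\rangle \otimes_{\K}^{\varphi} D_r(T))$. Using $t_j - f_j \in I_f$ to rewrite $s_{ij} + \partial t_i t_j = (s_{ij} + \partial t_i f_j) + \partial t_i (t_j - f_j)$, the sum $I_\phi + I_f$ admits the generating set
\[
\{\, t_i - f_i,\ s_{ij} + \partial t_i f_j,\ \partial_m + \sum_{k=1}^r \tfrac{\partial f_k}{\partial x_m}\partial t_k\,\},
\]
which matches the data appearing in the statement.

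The heart of the argument is then a Gröbner-basis computation parallel to Lemma \ref{BMgb}: fix a block elimination ordering in which $\{t_i\} \gg $ the remaining generators and, within the other block, $\{\partial t_k\}$ dominates the $D_n\langle S\rangle$-variables. I would check that the three families above form a left Gröbner basis so that successive elimination of $\{t_i\}$ and then of $\{\partial t_k\}$ realizes the intersections with $D_n\langle \partial t, S\rangle$ and $D_n\langle S\rangle$ respectively. For the old critical pairs the Generalized Product Criterion \ref{prodCrit} applies exactly as in Lemma \ref{BMgb}, so only the $s$-involving pairs require a fresh check.

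The main obstacle will be the critical pairs $(s_{ij} + \partial t_i f_j,\; s_{kl} + \partial t_k f_l)$: the $\mathfrak{gl}_r$-relations $[s_{ij}, s_{kl}] = \delta_{jk} s_{il} - \delta_{il} s_{kj}$ give a nontrivial contribution to the S-polynomial that must be shown to reduce to zero against the full generating set, matched by the commutation $[\partial t_i, t_j] = \delta_{ij}$ produced by expanding $[\partial t_i f_j, \partial t_k f_l]$. The mixed pairs of the form $(s_{ij}+\partial t_i f_j, \partial_m + \sum_k \tfrac{\partial f_k}{\partial x_m}\partial t_k)$ and $(s_{ij}+\partial t_i f_j, t_l - f_l)$ are routine Lie-bracket computations along the same lines as the last two cases of Lemma \ref{BMgb}. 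A secondary verification is that the chosen block ordering is compatible with the $G$-algebra structure of $E'$ (the hypothesis of Proposition \ref{preimageCor}): one must check $\lm(t_k s_{ij} - s_{ij} t_k) \prec t_k s_{ij}$ and the analogous inequality for $\partial t_k s_{ij}$, both of which follow because the corresponding brackets are linear in $t_k$ (respectively $\partial t_k$).
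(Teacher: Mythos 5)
Your proposal is correct and follows essentially the same route as the paper: the Mellin-type embedding $s_{ij}\mapsto -\partial t_i\, t_j$ combined with Lemma \ref{MaxAnn} and Proposition \ref{preimageCor}, the rewriting of $s_{ij}+\partial t_i t_j$ modulo $t_j-f_j$ to $s_{ij}+\partial t_i f_j$, and a Gröbner basis verification via the generalized Product Criterion in which the only genuinely new critical pairs are those involving the $s_{ij}$-generators. The paper carries out exactly the bracket computations you flag as the main obstacle, e.g. $[s_{ij}+f_j\partial t_i,\ s_{kl}+f_l\partial t_k]=\delta_{jk}(s_{il}+f_l\partial t_i)-\delta_{il}(s_{kj}+f_j\partial t_k)$, which reduces to zero against the same family of generators, confirming your outline.
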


\begin{proof}
Let $\phi : D_n \langle S \rangle \hookrightarrow D_n \otimes_{\K} D_r(T)$ 
be the $\K$-algebra homomorphism given by
$\phi(s_{ij}) = -t_j\d {t_i} -\delta_{ij}$ and $\phi (P) = P$ for all $P$ in $D_n$.
In view of Lemma \ref{MaxAnn}, we observe that $\ann_{D_n\langle S \rangle}(f^s) = D_n\langle S\rangle \cap I_f = \phi^{-1}(I_f)$.

The morphism $\phi$ can be written as $\phi = 1_{D_n} \otimes \varphi$, where
$\varphi: \K\langle S\rangle \hookrightarrow D_r(T) = \K\langle t, \d t \rangle$, 
$s_{ij} \mapsto -t_j\d {t_i} -\delta_{ij}$.
Thus we can apply Proposition \ref{preimageCor} to obtain that
$\ann_{D_n\langle S\rangle}(f^s) = (E' I_{\varphi} + E' I_f)\cap D_n\langle S\rangle$,
with $I_{\varphi} = \langle \{ s_{ij}+t_j\d {t_i} +\delta_{ij} \mid 1\leq i,j\leq r \} \rangle$ and
$E' = D_n\langle \{ t_j,\d t_j, s_{ij} \} \rangle$ subject to relations
$$
[s_{ij}, s_{kl}] = \delta_{jk} s_{il} - \delta_{il} s_{kj}, \;
[\d t_k, t_j] = \delta_{jk}, \; [s_{ij}, t_k] = -\delta_{ik} t_j,\; [s_{ij},\d {t_k}]= \delta_{jk} \d {t_i}.
$$
By Theorem \ref{ncPreimage}, $E'$ is a $G$-algebra, if there exists
an elimination ordering for \\$\{\d {t_1},\ldots,\d {t_r}\}$
on $D_n\langle \d t, S \rangle$, obeying the conditions
\[
\lm(\delta_{jk} s_{il} - \delta_{il} s_{kj} ) < s_{ij} s_{kl}, \ \
t_j < s_{ij} t_i,\ \text{ and }\ \d t_i < s_{ij} t_j.
\]
It is clear, that such orderings exist.

Now, we proceed with the elimination of $\{ t_i,\d {t_i} \mid 1\leq i\leq r \}$ 
from $(I_f + I_{\varphi})$ in $E'$. By taking a monomial ordering with the property
$\{t_j \}  \gg  \{ x_i \} $, $\{\d_i, s_{ij} \}  \gg  \{ x_i, \d t_j \}$, we start with eliminating
$\{t_j\}$ first.

By Lemma \ref{BMgb}, the generators $G_1$ of $I_f$ form a Gr\"obner basis. 
The ideal $I_{\varphi}$ in the current situation is generated by $G_2 = \{s_{ij} + t_j \d t_i + \delta_{ij}\}$. In order to prove, that $G_1 \cup G_2$ is a a Gr\"obner basis, 
we apply the generalized Product Criterion (Lemma \ref{prodCrit}). At first we apply reduction process by $G_1$, thus
obtaining $G'_2 = \{s_{ij} + f_j \d t_i\}$.
Then
\[
[s_{ij} + f_j \d t_i, s_{kl} + f_l \d t_k]
= \delta_{jk} (s_{il} + f_l \d t_i) - \delta_{il} (s_{kj} + f_j \d t_k)
\]
which clearly reduces to zero. The next kind of pairs
\[
[s_{ij} + f_j \d t_i, t_k - f_k] = 
- \delta_{ik} t_j +
\delta_{ik} f_j = - \delta_{ik} (t_j -f_j)
\]
again reduces to zero. It remains to consider
\[
[s_{ij} + f_j \d t_i, \d_m + \sum_{k=1}^r \frac{\d f_k}{\d x_m} \d {t_k}] =
%
\sum_{k=1}^r \frac{\d f_k}{\d x_m} \delta_{jk} \d {t_i} -
\d t_i [\d_m, f_j] = 0,
\]
since $[\d_m, f_j] = \frac{\d f_j}{\d x_m}$.

Hence, $G_1 \cup G'_2$ is a a Gr\"obner basis and hence, by the
Elimination Lemma, $G_3 = (G_1 \cup G'_2) \setminus\{t_j-f_j\}$
is a Gr\"obner basis of $(I_f + I_{\varphi}) \cap D_n \langle \d t_k, s_{ij} \rangle$.
Thus, it follows, that 
\[
\ann_{D\langle S \rangle} (f^s) =
\Big\langle s_{ij} + \d {t_i} f_j \, , \, \d_m + \sum_{k=1}^r \frac{\d f_k}{\d x_m} \d {t_k} \Big\rangle \cap D_n \langle S\rangle.\qedhere
\]
\end{proof}

Indeed, the result we have proved is a natural generalization
of the algorithm for computing the annihilator of $f^s$ in $D_n[s]$ (cf. \ref{BM}) given by
Brian\c con-Maisonobe in \cite{BM02}. Finally, the algorithm for the computation
of $\ann_{D_n\langle S \rangle} f^s$ looks as follows:


\begin{algorithm}[\texttt{SannfsVar}] ~
\begin{algorithmic}
\REQUIRE $f = (f_1,\ldots,f_r)$, an $r$-tuple in $\K[x]^r$
\ENSURE $\{G_1(S),\ldots, G_e(S)\}$, a Gr\"obner basis of
  $\ann_{D_n \langle S\rangle}(f^s)$
\STATE Let $D_n\langle \partial t, S\rangle$ be the algebra in Corollary \ref{SannFsVar},
with non-commutative relations
$$
[\partial_i,x_i] = 1,\quad [s_{ij},\d {t_k}]= \delta_{jk} \d {t_i},\quad
[s_{ij}, s_{kl}] = \delta_{jk} s_{il} - \delta_{il} s_{kj}.
$$
\STATE $J_1 := \big\langle \{ s_{ij}+\partial {t_i}f_j \mid 1\leq i,j \leq r \} \big\rangle$
\STATE $J_2 := \big\langle \{ \partial_{m} + \sum_{k=1}^r \frac{\partial f_k}{\partial x_m}
    \partial_{t_k} \mid 1\leq m\leq n \} \big\rangle$
\STATE $J:= J_1 + J_2$
  \hfill $\triangleright$ $J\subseteq D_n\langle \partial t, S \rangle$
\STATE $H:=$ G.B. of $J$ w.r.t. a compatible elim. ordering for $\partial {t_1},
     \ldots,\partial {t_r}$
\STATE $H\cap D_n\langle S \rangle =: \{G_1(S),\ldots, G_e(S)\}$
\RETURN $\{G_1(S),\ldots, G_e(S)\}$
\end{algorithmic}
\end{algorithm}

\subsection{Elimination orderings in $D_n\langle \partial t, S\rangle$}\label{elimORD}

One of the bottlenecks of the presented algorithm for computing the Bernstein-Sato
polynomial for varieties is to calculate the corresponding annihilator.
An elimination term ordering for $\{\partial t_1,\ldots,\partial t_r\}$ in
$D_n\langle \partial t,S\rangle$, which has
$2n+r+r^2$ variables, has to be considered. In addition, due to the structure of the $G$-algebra,
this ordering $<$ has to be chosen with the following extra restrictions
$$
  \partial t_i < s_{ij} \partial t_j,\qquad
  \lm(\delta_{jk} s_{il} - \delta_{il} s_{kj}) < s_{ij} s_{kl}
$$
for all indices $i,j,k,l$ where the expression makes sense. The efficiency of
the method strongly depends on the selected ordering. Therefore it is worth
analysing it in detail.

Assume that $<$ is such an ordering and let us consider the first two rows of
the matrix representing the ordering in this way.
$$
  \begin{array}{cccccc}
  \partial {t_1} & \cdots & \partial {t_r} & S & x & \partial_x\\
  \hline
  p_1 & \cdots & p_r & {\bf a} & {\bf b} & {\bf c}\\
  q_1 & \cdots & q_r & \alpha & \beta & \gamma\\
  \hline
  &&& <'\\
  \hline
  \end{array}
$$

The vectors ${\bf a}$, ${\bf b}$ and ${\bf c}$ must be zero, since $<$ is an
elimination ordering for $\{\partial t_i\}$. The conditions $\partial t_i < s_{ij}
\partial t_j$, imply $p_i\leq p_j$ for all $i,j$. Thus all $p_1,\ldots,p_r$ are
equal and can be taken as 1.

From computational point of view, since the variables $\{s_{ij}\}$ do not commute
with $\{\partial t_i\}$, these two blocks must be together in the elimination
ordering, namely $\beta = \gamma = 0$, otherwise Gr\"obner bases computation
may be slow.

In the implementation we have taken $\alpha_{ii}=2$ and $\alpha_{ij}=1$ for $i\neq j$,
and ${\bf q}=0$.
However, in some examples we have observed that lexicographical orderings are also
useful, see Example \ref{exLexic} below. 


In this section we have described an algorithm for computing the Bernstein-Sato polynomial
of affine algebraic varieties without any homogenization but passing through the
computation of the annihilator of $f^s$ in $D_n\langle S\rangle$. Now, other methods
are ilustrated.

\subsection{Another approach}

As Budur et.~al. point out in \cite[p.\ 794]{BMS06}, the Bernstein-Sato
polynomial for varieties coincides, up to shift of variables, with the $b$-function in \cite[p.\ 194]{SST00}, if the weight vector is chosen appropriately. Let us describe this algorithm more carefully.

Let $I_f = \ann_{D_n\langle t, \partial t \rangle} (f^s)$ be the Malgrange ideal associated with $f=(f_1,\ldots,f_r)$ and consider the weight vector $w = ((0,\ldots,0),(1,\ldots,1) \in \mathbb{Z}^n \times \mathbb{Z}^r$ which gives weight $0$ to $\partial_m$ and weight $1$ to $\partial t_i$. Consider the $V$-filtration $V = \{ V_k \mid k\in \Z \}$ on $D_n\langle t,\partial t \rangle$ with respect to $w$, where $V_k$ is spanned by $\{ t^{\alpha}\cdot \partial t^{\beta} \mid -|\alpha|+|\beta| \leq k \}$ over $\K$. Note that the associated graded ring $\oplus_{k\in\mathbb{Z}} V_k/V_{k-1}$ is isomorphic again to the $(n+r)$-Weyl algebra $D_n \langle t,\partial t \rangle$ and the homogeneous parts are the following.
$$
V_k / V_{k-1} =
\begin{cases}
D_n \langle t_i\cdot \partial t_j \rangle \partial^{\beta}, & |\beta| = k > 0;\\
D_n \langle t_i\cdot \partial t_j \rangle, & k=0;\\
D_n \langle t_i\cdot \partial t_j \rangle t^{\alpha}, & -|\alpha| = k < 0.
\end{cases}
$$

Denote by $B(s)$ the $b$-function of the holonomic ideal $I_f$ with respect to~$w$. Recall that $B(s)$ is the monic generator of the ideal $\ini_{(-w,w)}(I_f) \cap \K[t_1 \partial t_1 + \cdots + t_r \partial t_r ]$.

As in the classical case, i.e. $r=1$, the following result holds.

\begin{lemma}
$b_f(s) = (-1)^{\deg B(s)} B(-s-r)$.
\end{lemma}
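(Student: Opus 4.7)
The plan is to prove the two divisibilities $b_f(s)\mid(-1)^{\deg B}B(-s-r)$ and $(-1)^{\deg B}B(-s-r)\mid b_f(s)$ in $\K[s]$; since both polynomials are monic, this forces equality. The bridge between the two constructions is the injective $\K$-algebra map $\phi\colon D_n\langle S\rangle\hookrightarrow D_n\langle t,\partial t\rangle$ from Theorem~\ref{SannFsVar}, $s_{ij}\mapsto -\partial t_i t_j$, which sends $\sigma:=s_{11}+\cdots+s_{rr}$ to $-\theta-r$ with $\theta:=\sum_i t_i\partial t_i$, and satisfies $\ann_{D_n\langle S\rangle}(f^s)=\phi^{-1}(I_f)$. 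A direct computation with the module action gives $t_i\partial t_i\bullet f^s=-(s_i+1)f^s$, whence $\theta\bullet f^s=-(\sigma+r)f^s$ and $B(\theta)\bullet f^s=B(-\sigma-r)\cdot f^s$.

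The technical heart is the following rewriting lemma: for every PBW monomial $m=x^\mu\partial^\nu t^\alpha\partial t^\beta$ with $|\alpha|>|\beta|$ there exist $P_l\in\phi(D_n\langle S\rangle)$ such that $m\equiv\sum_l P_l\,f_l\pmod{I_f}$. I would prove it by strong induction on $|\alpha|+|\beta|$. The base case $|\beta|=0$ uses $t^\alpha\equiv f^\alpha=(f^{\alpha-\epsilon_k})\cdot f_k\pmod{I_f}$ for any $k$ with $\alpha_k>0$. For the inductive step with $|\beta|\geq 1$, one picks $j$ with $\alpha_j>0$ and applies $\partial t^\beta\,t_j=t_j\partial t^\beta+\beta_j\partial t^{\beta-\epsilon_j}$ to obtain $m_1\cdot t_j=m+\beta_j m_2$, where $m_1:=x^\mu\partial^\nu t^{\alpha-\epsilon_j}\partial t^\beta$ and $m_2:=x^\mu\partial^\nu t^{\alpha-\epsilon_j}\partial t^{\beta-\epsilon_j}$ both have strictly smaller total degree; applying the inductive hypothesis to $m_1$ (or, when $|\alpha|-1=|\beta|$, using directly that $m_1$ is weight $0$ and hence already in $\phi(D_n\langle S\rangle)$) and then trading the trailing $t_l$ for $f_l$ via $A(t_l-f_l)\in D\cdot I_f=I_f$, combined with the inductive hypothesis for $m_2$, yields the required representation. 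I expect the careful bookkeeping required to maintain a factor of $f_l$ on the right throughout this recursion, within the left-ideal structure of $I_f$, to be the main obstacle.

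For the forward direction, by the definition of $B(s)$ I would pick $Q\in I_f$ with $\inw(Q)=B(\theta)$ and write $Q=B(\theta)+R$ with $R\in V_{-1}$. The rewriting lemma gives $R=\sum_l P_l f_l+J$ with $J\in I_f$, so $Q-J=B(\theta)+\sum_l P_l f_l=\phi\bigl(B(-\sigma-r)+\sum_l\widetilde P_l f_l\bigr)\in I_f$, where $\widetilde P_l:=\phi^{-1}(P_l)\in D_n\langle S\rangle$. Hence $B(-\sigma-r)+\sum_l\widetilde P_l f_l\in\phi^{-1}(I_f)=\ann_{D_n\langle S\rangle}(f^s)$, so $B(-\sigma-r)\in\K[\sigma]\cap\bigl(\ann_{D_n\langle S\rangle}(f^s)+\langle f_1,\ldots,f_r\rangle\bigr)=\langle b_f(\sigma)\rangle$, and therefore $b_f(\sigma)\mid B(-\sigma-r)$.

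For the reverse direction, by the definition of $b_f$ write $b_f(\sigma)=A+\sum_l P_l f_l$ with $A\in\ann_{D_n\langle S\rangle}(f^s)$ and $P_l\in D_n\langle S\rangle$. Applying $\phi$: $b_f(-\theta-r)-\sum_l\phi(P_l)f_l=\phi(A)\in I_f$. Since $\theta$, every element of $\phi(D_n\langle S\rangle)$, and every $f_l$ have PBW expansions consisting only of monomials of $(-w,w)$-weight $0$, so does the entire left-hand side; hence it coincides with its own initial form and lies in $\inw(I_f)$. Moreover $\inw(t_l-f_l)=-f_l$ shows $f_l\in\inw(I_f)$, so $\sum_l\phi(P_l)f_l\in\inw(I_f)$ as well; adding, $b_f(-\theta-r)\in\inw(I_f)\cap\K[\theta]=\langle B(\theta)\rangle$. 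Thus $B(s)\mid b_f(-s-r)$, equivalently $B(-s-r)\mid b_f(s)$ after substituting $s\mapsto -s-r$. Combining the two divisibilities and matching monic leading coefficients yields $b_f(s)=(-1)^{\deg B}B(-s-r)$.
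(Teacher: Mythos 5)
Your overall architecture coincides with the paper's: both divisibilities are obtained by transporting the two defining properties across the Mellin map $\phi$ with $\phi(\sigma)=-\theta-r$, using $\ann_{D_n\langle S\rangle}(f^s)=\phi^{-1}(I_f)$, and the two monic polynomials are then matched up to the sign $(-1)^{\deg B}$. Your argument for $B(s)\mid b_f(-s-r)$ (observing that $\phi(A)$ is $(-w,w)$-homogeneous of weight $0$ and that $f_l=-\inw(t_l-f_l)\in\inw(I_f)$) is a clean variant of the paper's, which instead replaces $f_k$ by $t_k$ modulo $I_f$ and then takes initial forms. The real divergence is in the other direction: the paper exploits the graded decomposition $V_{-k}/V_{-k-1}\cong\bigoplus_{|\alpha|=k}D_n\langle t_i\partial t_j\rangle\,t^{\alpha}$ to write $Q=B(\theta)+\sum_{|\alpha|\geq 1}Q_{\alpha}(t_i\partial t_j)t^{\alpha}$ in one stroke and then performs a single left reduction by $\{t_i-f_i\}$, landing directly in $I_f\cap D_n\langle t_i\partial t_j\rangle$, whereas your monomial-by-monomial rewriting lemma reproves this structural fact by hand. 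Your induction does go through, but only if you fix the order of operations in the inductive step: since $I_f$ is merely a left ideal, you cannot first replace $m_1$ by $\sum_l P_l f_l$ modulo $I_f$ and then multiply the resulting congruence on the right by $t_j$ or $f_j$. You must first use $m_1(t_j-f_j)\in I_f$ to pass from $m_1t_j$ to $m_1f_j$, then commute $f_j\in\K[x]$ past $t^{\alpha-\epsilon_j}\partial t^{\beta}$, re-expand into monomials (whose $(t,\partial t)$-multidegree has strictly dropped), and only then invoke the inductive hypothesis on those monomials and on $m_2$. With that correction --- or by simply quoting the $V$-graded decomposition, as the paper does --- your proof is complete; the paper's route buys a much shorter argument, while yours makes explicit the elementary rewriting that the graded structure encodes.
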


\begin{proof}
Consider $P_1(S), \ldots, P_k(S)\in D_n \langle S \rangle$ differential operators satisfying the functional equation $\sum_{k=1}^r P_k(S) f_k \bullet f^s = b_f(s_1 + \cdots + s_r)\bullet f^s$. Then $b_f(s_1+\cdots+s_r) - \sum_{k=1}^r P_k(S) f_k$ is an element in $\ann_{D_n\langle S \rangle} (f^s)$ and hence applying the Mellin transform, or equivalently making the substitution $s_{ij}\mapsto -t_j \partial t_i - \delta_{ij}$, one obtains the following element in $I_f$.
$$
  b_f(-t_1 \partial t_1 - \cdots - t_r \partial t_r - r) - \sum_{k=1}^r P_k (- t_j \partial t_i - \delta_{ij}) f_k \ \in \ I_f
$$
Modulo $I_f$ the polynomials $f_k$ in the above expresion can be replaced by $t_k$, since $t_k-f_k\in I_f$. Finally, taking initial parts one concludes that $b_f(- t_1 \partial t_1 -  \cdots - t_r \partial t_r - r) \in \ini_{(-w,w)}(I_f)$, which means that $B(s)$ divides $b_f(-s-r)$.

Conversely, by definition there exists a differential operator $P(t,\partial t) \in I_f \subset D_n \langle t, \partial_t \rangle$ such that $B(t_1 \partial t_1 + \cdots + t_r \partial t_r) = \ini_{(-w,w)}(P(t,\partial t))$. In particular $P(t,\partial t)$ has $V$-degree zero and hence it can be decomposed into $V$-homogeneous parts as follows
$$
  P(t,\partial t) = B(t_1 \partial t_1 + \cdots + t_r \partial t_r) + \sum_{|\alpha| \geq 1} Q_{\alpha} (t_i \partial t_j) t^{\alpha}.
$$
Since $P(t,\partial t ) \in I_f$, making left reduction of $P(t,\partial t)$ with respect to $\{t_i - f_i\}$ we arrive at $B(t_1 \partial t_1 + \cdots + t_r \partial t_r) + \sum_{|\alpha| \geq 1} Q_{\alpha} (t_i \partial t_j) f^{\alpha} \in I_f \cap D_n \langle t_i \partial t_j \rangle$. After applying the substitution $t_i \partial t_j\mapsto -s_{ji} - \delta_{ij}$, we conclude that $B(-s_1-\ldots-s_r-r)$ belongs to the ideal $\ann_{D_n \langle S \rangle}(f^s) + \langle f_1,\ldots,f_r \rangle$ and the proof is complete.
\end{proof}


Algorithms for computing this $b$-function, which use the homogenization technique in the Weyl algebra,
are given in Section \ref{bfctIdeal}, see also \cite{SST00}.
We describe the complete algorithm for computing Bernstein-Sato polynomials using
initial parts.


\begin{algorithm}[\texttt{bfctVar}] ~
\begin{algorithmic}
\REQUIRE $f = (f_1,\ldots,f_r)$, an $r$-tuple in $\K[x]^r$;
Z, variety associated with $f$
\ENSURE $b_Z(s) = b_f(s-\codim Z+1)$, Bernstein-Sato polynomial of $Z$
\STATE Let $D_n\langle t, \partial t \rangle = D_n \otimes D_r$ be the $(n+r)$-Weyl algebra.
\STATE $I := \big\langle \big\{t_i-f_i\big\}_{i=1}^r\, ,\
  \big\{\partial_{m} +  \sum_{j=1}^r
  \frac{\partial f_j}{\partial x_m} \partial{t_j}\big\}_{m=1}^n
  \big\rangle$
  \hfill $\triangleright$ $I\subseteq D_n\langle t, \partial t\rangle$
\STATE $w:=((0,\ldots,0),(1,\ldots,1))\in\Z^n\times \Z^r$
\STATE $J:=\texttt{InitialIdeal}(I,w)$
  \hfill $\triangleright$ Algorithm \ref{InitialIdeal}
\STATE $s:=- (\partial t_1\cdot t_1 +\ldots+ \partial t_r\cdot t_r)$
\STATE $b(s):=\texttt{pIntersect}(s,J)$
  \hfill $\triangleright$ Algorithm \ref{PrincipalIntersect}\\
\RETURN $b(s-\codim Z + 1)$
\end{algorithmic}
\end{algorithm}

As for the \BS polynomial of a polynomial (indicated in Remark \ref{2methods}),
we have two different methods for the computation of \BS polynomial of an affine algebraic variety, namely
\begin{itemize}
\item minimal polynomial for $s_1+\ldots+s_r$ in $D/ \ini_{(-w,w)} (I_f)$,
\item $(\Ann f^s + \langle f_1,\ldots, f_r \rangle) \cap \K[s_1+\ldots+s_r]$, where the intersection can be done rather with the \ref{PrincipalIntersect} method, than by using Gr\"obner basis elimination.
\end{itemize}

It is important to investigate the connection of these methods and especially their applicability
to different classes of varieties. Our experience shows, that no method is clearly superior
to the other one in general. Thus it is desired to have both of them in any package
for $D$-modules.

\begin{remark}
Very recently, the authors have realized another approach for computing Bernstein-Sato
polynomials for varieties. In \cite{Shibuta08}, Shibuta modifies the definition of
Budur-Musta{\c{t}}{\v{a}}-Saito's Bernstein-Sato polynomials to determine a system of
generators
of the multiplier ideals of a given ideal. Then he obtains an algorithm for computing
Bernstein-Sato polynomials, which gives an algorithm for computing multiplier ideals and
jumping coefficients.
His methods are based on the theory of Gr\"obner bases in Weyl algebras and corresponds
to the natural generalization given by Oaku and Takayama, hence they need
homogenization techniques.
\end{remark}

We conclude this section showing several examples calculated with our experimental implementation.

\newcommand{\B}{\Big}
\begin{example}\label{exLexic}
Let $TX = V(x_0^2+y_0^3, 2x_0 x_1 + 3 y_0^2 y_1) = V(f_1,f_2) \subset \C^4$ the tangent bundle
of $X = V(x^2+y^3)\subset \C^2$. The annihilator of $f^s$ in
$D\langle S\rangle$  and the Bernstein-Sato polynomial of $TX$ using the previous approach
can be computed with the {\sc Singular} commands {\tt SannfsVar} and {\tt bfctVarAnn}.
\begin{verbatim}
LIB "bfunVar.lib"; 
ring R = 0,(x0,x1,y0,y1),Dp;
ideal F = x0^2+y0^3, 2*x0*x1+3*y0^2*y1;
bfctVarAnn(F);
\end{verbatim}
The output is lengthy, hence we supress it. We obtain an ideal called {\tt LD} with 15 generators and the \BS polynomial for $TX$, which looks as follows
$$
	b_{TX}(s) = (s+1)^2 (s+\frac{1}{3})^2 (s+\frac{2}{3})^2 (s+\frac{1}{2}) (s+\frac{5}{6}) (s+\frac{7}{6}).
$$
Analogously, one can consider the tangent bundle of $V(x^4+y^5)$. In this case the Bernstein
polynomial has degree 42 and it equals
$$
\begin{array}{l}\displaystyle
\B(s+\frac{1}{5}\B)^2 \B(s+\frac{3}{5}\B)^2 \B(s-\frac{1}{5}\B)^2
\B(s+\frac{2}{5}\B)^2 \B(s+\frac{1}{15}\B) \B(s+\frac{1}{4}\B)
\B(s-\frac{4}{15}\B) \B(s+\frac{1}{6}\B)\\[0.25cm] \displaystyle
\B(s+\frac{13}{20}\B) \B(s-\frac{1}{12}\B) \B(s+\frac{8}{15}\B)
\B(s+\frac{1}{20}\B) \B(s-\frac{1}{20}\B) \B(s+\frac{1}{2}\B)
\B(s+\frac{7}{20}\B) \B(s+\frac{4}{15}\B)\\[0.25cm] \displaystyle
\B(s+\frac{2}{3}\B) \B(s+\frac{2}{15}\B) \B(s+\frac{5}{12}\B)
\B(s+0\B) \B(s-\frac{1}{6}\B) \B(s-\frac{1}{15}\B) \B(s-\frac{1}{10}\B)
\B(s-\frac{5}{12}\B)\\[0.25cm] \displaystyle
\B(s-\frac{2}{15}\B) \B(s-\frac{3}{20}\B) \B(s-\frac{3}{10}\B) \B(s+\frac{7}{15}\B)
\B(s+\frac{3}{20}\B) \B(s+\frac{1}{3}\B) \B(s+\frac{3}{10}\B) \B(s-\frac{1}{3}\B)\\[0.25cm] \displaystyle
\B(s+\frac{1}{10}\B) \B(s+\frac{11}{20}\B) \B(s+\frac{9}{20}\B) \B(s+\frac{1}{12}\B)
\B(s+1\B) \B(s+\frac{3}{4}\B).
\end{array}
$$
The result was not able to be obtained using the elimination ordering given
in section \ref{elimORD}. Instead the following monomial ordering (in
{\sc Singular} format) has been taken: 
\begin{verbatim}
     "(a(1,1),a(0,0,2,1,1,2),(dp(6),rp)".
\end{verbatim}

Note that if $TX$ is the variety defined by $f=(f_1,f_2)$, then $b_f(s)$ has always negative roots.
As this examples shows the same is not true for $b_Z(s)$, due to the codimension.
\end{example}

\begin{example}
Let $Z$ be the algebraic variety defined by
$f= (x_1^3-x_2 x_3, x_2^2-x_1 x_3, x_3^2-x_1^2 x_2)$. Then
$$
  b_Z(s) = \B(s+1\B)^2 \B(s+\frac{7}{9}\B) \B(s+\frac{5}{9}\B) \B(s+\frac{1}{2}\B)
 \B(s+\frac{8}{9}\B) \B(s+\frac{11}{9}\B) \B(s+\frac{10}{9}\B) \B(s+\frac{4}{9}\B).
$$
This is actually Example 5.10 in \cite{Shibuta08} and it corresponds to the
space of monomial curve 
$\mathrm{Spec}\,\C[T^3,T^4,T^5] \subset \C^3$.
Note that the $b$-function coincides with the one that appears in \cite{Shibuta08},
since we are computing $b_Z(s)$ instead of $b_f(s)$.
\end{example}


\begin{example}
Let $Z$ be the so-called Hirzebruch-Jung singularity of type $(5,2)$. It is a cyclic
quotient singularity and can be seen as the algebraic variety associated with
the ideal $\langle z_3^2-z_2 z_4, z_2^2 z_3-z_1 z_4, z_2^3-z_1 z_3 \rangle
\subset \C [z_1, z_2, z_3, z_4]$. Then
$$
  b_Z(s) = (s+1)^3 (s+\frac{4}{3}) (s+\frac{5}{3}) (s+\frac{3}{2}).
$$
\end{example}

\begin{example}
As an intractible example we would like to mention the following one. Let $Z$ be
the cyclic quotient singularity of type $(6;1,2,3)$. It can also be seen as the
toric variety associated with the matrix
$$
  A := \left(\begin{array}{ccccccc}
  6 & 4 & 2 & 0 & 3 & 1 & 0\\
  0 & 1 & 2 & 3 & 0 & 1 & 0\\
  0 & 0 & 0 & 0 & 1 & 1 & 2
  \end{array}\right).
$$


The corresponding ideal can be taken to have 9 generators in $\C[z_1,\ldots,z_7]$. 
We are not yet able to compute the Bernstein-Sato polynomial even trying several elimination orderings.
\end{example}

\section{Conclusion and Future Work}

From our recent articles there follow some important conclusions.
\begin{enumerate}
\item In \cite{LM08} we proved, that for the computation of
Bernstein-Sato polynomial of a hypersurface, the homogenization, used in 
the method of Oaku and Takayama is superfluous. This, however, does not apply to
the situation of computing a $b$-function with respect to weights. 
By several steps in \cite{LM08, ALM09} and in this article we have shown, that the method
by Brian\c{c}on and Maisonobe can be seen as natural refinement of the  
method of Oaku and Takayama. 
\item From our investigations it follows, that we cannot in general
prove, that either initial-based method or annihilator-based one
for the computation of Bernstein-Sato polynomial is definitely more efficient than
the other one. Instead, on numerous examples we see that roughly the domain of
better performance of initial-based method includes hyperplane arrangements, while
for other singularities annihilator-based method scores distinctly better. 
It is important to continue these investigations and derive more classes of
polynomials, when possible, in order to use this information in attempts
to estimate at least the practical complexity of $D$-module computations.
\item Also for the syzygy-driven method to compute $\Ann_{D[s]} (f^s)$ 
we do not have yet a proof of its superiority over the method
by Brian\c{c}on and Maisonobe. Due to the reasons we explain in this
paper one could achieve such superiority. But on the other hand, 
there are examples, which show the contrary. Even if there are only
a few examples of such kind, it is interesting to investigate this
phenomenon deeper.
\item We pay so much attention to the algorithms for the case
of a hypersurface due to many reasons. According to our proofs
for the case of a variety, we use indeed the same technology. We
expect to generalize all of enhancements we have described to the
case of an affine variety in a similar way we presented the generalization
of algorithms for annihilator and Bernstein-Sato polynomial. It is very
important to stress, that working in the case of a variety is a priori 
much more involved computationally, hence more attention on very
effective algorithms need to be paid.  
\end{enumerate}

\begin{ack}
We would like to thank Francisco Castro-Jim\'{e}nez, Jos\'{e}-Mar\'{i}a Ucha, Gert-Martin Greuel,
Enrique Artal and Jos\'{e}-Ignacio Cogolludo for their constant support and motivation in our
work over years.

Special thanks go to Hans Sch\"onemann and Michael Brickenstein, whose continuous support 
in numerous aspects of \textsc{Singular} and discussions on computer algebra we deeply appreciate.

We are grateful to Masayuki Noro
and Anton Leykin for their help and careful explanations, concerning the computer algebra systems \textsc{Asir} and \textsc{Macaulay2}. 

We also thank Uli Walther for fruitful discussions.
\end{ack}


\end{document}